\newcommand{\mmS}{\mathcal{S}}
\newcommand{\mmC}{\mathcal{C}}
\newcommand{\mmN}{\mathcal{N}}
\newcommand{\mmE}{\mathcal{E}}
\newcommand{\mmR}{\mathcal{R}}
\newcommand{\mmU}{\mathcal{U}}
\newcommand{\mmH}{\mathcal{H}}
\newcommand{\mmRu}{\mathcal{R}_{\mathcal{U}}}
\newcommand{\mmG}{\mathcal{G}}
\newcommand{\wk}{\widetilde{\kappa}}
\renewcommand{\wr}{\widetilde{r}}
\newcommand{\wR}{\widetilde{\mmR}}
\renewcommand{\k}{\kappa}
\newcommand{\extDelta}{\Delta'}
\newcommand{\R}{{\mathbb R}}
\newcommand{\st}{\,\mid \,}
\DeclareMathOperator{\im}{im} 
\DeclareMathOperator{\supp}{supp}
\theoremstyle{plain}
\newtheorem{theorem}{Theorem}
\newtheorem{proposition}[theorem]{Proposition}
\newtheorem{lemma}{Lemma}
\theoremstyle{definition}
\newtheorem{definition}[theorem]{Definition}
\newtheorem{remark}[theorem]{Remark}
\newtheorem{example}[theorem]{Example}
\begin{document}

\title{Graphical reduction of reaction networks by linear elimination of species}

\author{Meritxell S\'aez\and Carsten Wiuf \and Elisenda Feliu}

\maketitle

\begin{abstract} 
The quasi-steady state approximation and time-scale separation are commonly applied methods to simplify models of biochemical reaction networks based on ordinary differential equations (ODEs). The concentrations of the ``fast'' species are assumed effectively to be at steady state with respect to the ``slow'' species. Under this assumption the steady state equations can be used to eliminate the ``fast'' variables and a new ODE system with only the slow species can be obtained.

We interpret a reduced system obtained by time-scale separation as the ODE system arising from a unique reaction network, by identification of a set of reactions and the corresponding rate functions. The procedure is graphically based and can easily be worked out by hand for small networks. For larger networks, we provide a pseudo-algorithm.  We study properties of the reduced network, its kinetics and conservation laws, and show that the kinetics of the reduced network fulfil realistic assumptions, provided the original network does. We illustrate our results using biological examples such as substrate mechanisms, post-translational  modification systems and networks with intermediates (transient) steps.

\textbf{Key words: }{Reduced network, quasi-steady-state, species graph, noninteracting, dynamical system, positivity}

\textbf{MSC Codes: }{MSC 92C42,  MSC 80A30}
\end{abstract}

\medskip

\section{Introduction}
\label{intro}
Biochemical reaction networks  often involve many biochemical species that interact through many reactions. The mathematical models that are used to describe such networks can be quite complex and analytically intractable, both in terms of variables (species concentrations) as well as (unknown) parameters such as reaction rate constants. It is therefore commonplace to reduce the dimensions of the models by various means.

One way to  simplify a model  is by  time-scale separation \cite{pantea-QSSA,gunawardena-linear}. 
To apply time-scale separation, the species in the system are divided into fast and slow species. Fast species are assumed  to be at  equilibrium (a so-called quasi-equilibrium) even though the system as a whole has not reached a state of equilibrium. For example, if degradation of a species proceeds at  high rate, then it is short lived  and  becomes a  `fast' species. It is thus reasonable to assume that the species is in a state of (quasi-)equilibrium. Being at equilibrium, the fast species might be eliminated from the system, resulting in a simpler, reduced system with only the slow species.  
Tikhonov's theorem \cite{Goeke2015,tikhonov} might be used to conclude that the original and the reduced system have similar dynamics (over a compact time interval).

Our aim is to  interpret the reduced system  as an ODE system arising from a  reaction network with only the slow species. 
We follow some ideas in \cite{Fel_elim,feliu:intermediates}.
The starting point is  a system of ordinary differential equations (ODEs)  arising from a reaction network with a given kinetics.  We  assume that a set of species (`the fast species') is given and that these species  effectively are at steady state.  If the  species do not interact with each other  (i.e.\,\,they are not on the same side of a reaction), then  a  relationship between the concentrations of the fast and the slow species   can be derived under certain conditions \cite{Fel_elim}. Using this relationship, a reduced ODE system for the slow species in which the fast species have been eliminated is obtained. 

The next step is to  interpret the  ODE system for the slow species as the ODE system associated with a \emph{reduced reaction network}. This step could be carried out in various ways. We identify sequences of reactions in the original network, by means of a specific \emph{graph} (see example below), such that the net production of the fast species is zero. By contracting  the sequence into a single reaction and removing the fast species, a reduced reaction is obtained. A non-trivial issue is to determine the reaction rate of the reduced reaction. For small networks, the procedure can easily be carried out by hand.

Subsequently, we  establish that the reduced reaction network has some desirable basic properties. 
For example, if the reactions of the original reaction network cannot take place in the absence of their reactant species, then this  also holds for the reduced reaction network.  

Graphical means to re-interpret  a reaction network under quasi-stationarity are not new.
The King-Altman procedure is a systematic way to  eliminate enzymes and intermediate species, in an enzyme catalysed system with mass-action kinetics \cite{king-altman}. Later, Wong and Hanes \cite{WH62} gave a systematic method to find the  production rates of the slow species, avoiding much of the algebraic manipulations in \cite{king-altman}; see also \cite{gunawardena-linear}.
These approaches can be seen as instances of a general linear (graphical) elimination procedure \cite{Fel_elim,feliu:intermediates}.

None of the mentioned approaches specifically identifies a reduced reaction network, except for \cite{feliu:intermediates}, where a reduced reaction network is obtained after  elimination of intermediate species. A related approach by Horiuti,  Temkin and co-workers, is based on  a graphical procedure  to identify, not one but many, reduced reaction networks \cite{Rad,Temkin1}. The procedure is similar in spirit to our method and we will contrast the two approaches in Subsection \ref{sec:temkin}. 
In \cite{Gabor2015} an algorithmic method to obtain possible reaction networks corresponding to a rational ODE system is described. In \cite{Rao2014} a graphical method for the elimination of complexes (not species) is presented.

To  illustrate the results and methods of the paper, consider a \emph{ping-pong bi-bi} mechanism  \cite{FH07}, described by the reaction network
\[
\begin{array}{c}
E+S_1\ce{<=>[k_1][k_2]} Y_1 \ce{<=>[k_3][k_4]} E^*+P_1 \qquad 
E^*+S_2\ce{<=>[k_5][k_6]} Y_2 \ce{<=>[k_7][k_8]} E+P_2,
\end{array}
\]
where $E$, $E^*$ are two forms of an enzyme, $S_1$, $S_2$ are substrates, $Y_1$, $Y_2$ intermediates and $P_1$, $P_2$ products. 
The molar concentrations of the species are denoted as: $x_1=[E]$, $x_2=[E^*]$, $x_3=[S_1]$, $x_4=[S_2]$, $x_5=[P_1]$, $x_6=[P_2]$,  $x_7=[Y_1]$ and $x_8=[Y_2]$.
With this notation and assuming mass-action kinetics, the evolution of the species concentrations is described by the 
 ODE system  (cf. Equation \eqref{eq:ODE}):
\begin{align*}
\dot{x}_1&=k_{{2}}x_{{7}}+k_{{7}}x_{{8}}-(k_{{1}}x_{{3}}+k_{{8}}x_{{6}})x_1 & 
\dot{x}_2&=k_{{3}}x_{{7}}+k_{{6}}x_{{8}} -(k_{{4}}x_{{5}}+k_{{5}}x_{{4}})x_2 \\
\dot{x}_3&=k_{{2}}x_{{7}} - k_{{1}}x_{{1}}x_{{3}} & 
\dot{x}_4&= k_{{6}}x_{{8}} -k_{{5}}x_{{2}}x_{{4}} \\
\dot{x}_5& = k_{{3}}x_{{7}}-k_{{4}}x_{{2}}x_{{5}} 
&\dot{x}_6 &= k_{{7}}x_{{8}} -k_{{8}}x_{{1}}x_{{6}} \\
\dot{x}_7&= k_{{1}}x_{{1}}x_{{3}}+k_{{4}}x_{{2}}x_{{5}}-(k_{{2}}+k_{{3}})x_{{7}}&\dot{x}_8 & = k_{{5}}x_{{2}}x_{{4}}+k_{{8}}x_{{1}}x_{{6}}-(k_{{6}}+k_{{7}})x_{{8}}.
\end{align*}

The main interest is on the conversion of substrates into products \cite{enz-kinetics}. It is thus sensible to eliminate the set of species $\mmU=\{E, E^*, Y_1,Y_2\}$ 
to obtain a reduced reaction network on the species $S_1,S_2,P_1,P_2$ alone. 
Assuming that  $E, E^*, Y_1,Y_2$ are at steady state and using the conservation law for the total amount of enzyme, $T=x_1+x_2+x_7+x_8$, we obtain
\begin{align} \label{eq:ex-intro}
x_{{1}} & = q(x)((k_{{6}}+k_{{7}})k_{{2}}k_{{4}}x_5+(k_{{2}}+k_{{3}})k_{{5}}k_{{7}}x_4) \nonumber
\\
x_{{2}} & =q(x)((k_{{6}}+k_{{7}})k_{{1}}k_{{3}}x_3+(k_{{2}}+k_{{3}})k_{{6}}k_{{8}}x_6) \\
x_{{7}} &=q(x)(k_{{1}}k_{{5}}k_{{7}}x_3x_4+(k_{{6}}+k_{{7}})k_{{1}}k_{{4}}x_3x_5+k_{{4}}k_{{6}}k_{{8}}x_{{5}}x_{{6}}) \nonumber \\
x_{{8}} &=q(x)(k_{{1}}k_{{3}}k_{{5}}x_{{4}}x_{{3}}+k_{{2}}k_{{4}}k_{{8}}x_{{5}}x_{{6}}+(k_{{2}}+k_{{3}})k_{{5}}k_{{8}}x_4x_6), \nonumber
\end{align}
where 
\begin{align*}
q(x)&= T\Big{/}\Big{(}  
k_{1}k_{5}( k_{3}+k_{7} ) x_{3}x_{4}+ ( k_{6}+k_{7} ) ( k_{1}k_{4}x_{3}x_{5}   + k_{1}k_{3} x_{3}+ k_{2}k_{4} x_{5} )   
  \\
&+ k_{4}k_{8}( k_{2}+k_{6} )x_{5}x_{6}+  
( k_{2}+k_{3} )(k_{5}k_{8}x_{4}x_{6} + k_{5}k_{7} x_{4}+  k_{6}k_{8} x_{6}) \Big).
\end{align*}
Conditions that guarantee existence and positivity of functions  expressing the concentrations of the species in a set $\mmU$ in terms of the remaining species, were given in \cite{Fel_elim}.
These are reviewed in Section~\ref{elimination}. The form of the expressions is the content of  Theorem \ref{elim}. 

After substitution of \eqref{eq:ex-intro} into the ODE system, we obtain the production rates of   $P_1$, $P_2$ and the consumption rates of $S_1$, $S_2$:
\begin{equation}\label{reduced-intro}
\dot{x}_5=\dot{x}_6=-\dot{x}_3=-\dot{x}_4 =q(x)(-k_{{2}}k_{{4}}k_{{6}}k_{{8}}x_{{5}}x_{{6}}+k_{{1}}k_{{3}}k_{{5}}k_{{7}}x_{{4}}x_{{3}}).
\end{equation}
This ODE system might be interpreted as arising from a reaction network with one reversible reaction with the following rate functions:
\begin{equation}\label{ex:introred}
S_1+S_2\ce{<=>[\kappa_1][\kappa_2]} P_1+P_2,\quad \kappa_1=q(x)k_{{1}}k_{{3}}k_{{5}}k_{{7}}x_{{3}}x_{{4}}, \quad\kappa_2 = q(x)k_{{2}}k_{{4}}k_{{6}}k_{{8}}x_{{5}}x_{{6}}.
\end{equation}
In this particular case, it seems straightforward to identify  reasonable reactions that explain \eqref{reduced-intro}. However, in general this might not be so.  
A procedure to systematically find a reduced reaction network is presented in Section~\ref{reduction}. At the core of the procedure  is a graph that relates the species in $\mmU$ (Definition~\ref{defgraph}).  
The graph of the example  is 
\begin{center}
\begin{tikzpicture}[inner sep=1pt]
\clip(-0.5,-1.5) rectangle (4.5,0.5);
\node (E1) at (0,-0.5) {$E$};
\node (E2) at (4,-0.5) {$E^*$};
\node (Y1) at (2,0) {$Y_1$};
\node (Y2) at (2,-1) {$Y_2$};
\draw[->] (E1) to[out=70,in=170] node[above,sloped]{\footnotesize $k_1x_3$}(Y1);
\draw[->] (Y1) to[out=190,in=50] node[below,sloped]{\footnotesize $k_2$}(E1);
\draw[->] (E2) to[out=250,in=-10] node[below,sloped]{\footnotesize $k_5x_4$}(Y2);
\draw[->] (Y2) to[out=10,in=230] node[above,sloped]{\footnotesize $k_6$}(E2);
\draw[->] (Y2) to[out=190,in=280] node[below,sloped]{\footnotesize $k_7$}(E1);
\draw[->] (E1) to[out=300,in=170] node[above,sloped]{\footnotesize $k_8x_6$}(Y2);
\draw[->] (Y1) to[out=10,in=110] node[above,sloped]{\footnotesize $k_3$}(E2);
\draw[->] (E2) to[out=130,in=-10] node[below,sloped]{\footnotesize $k_4x_5$}(Y1);
\end{tikzpicture}
\end{center}
Each edge in the graph corresponds to a reaction in the original network. For example, the edge $E\ce{->[k_8x_6]} Y_2$ corresponds to the reaction $E+P_2\ce{->[k_8]} Y_2$.

Our main result is Theorem \ref{rednet} that establishes the reduced reaction network.  Essentially, it has two types of reactions: 
 The reactions of the original network that do not involve species in $\mmU$, and reactions found by considering certain  cycles of the graph. 
 The reactant (resp.\ product) of such a reduced reaction is the sum of the reactants (resp.\ products) of the original reactions defining the cycle, after removing the species in $\mmU$.
For example,  the above graph has two cycles that give rise to two reduced reactions. The clockwise cycle involves the reactions corresponding to the labels $k_1x_3$, $k_3$, $k_5x_4$ and $k_7$:
$$
\begin{array}{c}
E+S_1\ce{->[k_1]} Y_1 \qquad Y_1 \ce{->[k_3]} E^*+P_1 \qquad  E^*+S_2\ce{->[k_5]} Y_2 \qquad Y_2 \ce{->[k_7]} E+P_2.
\end{array}
$$
Adding the reactants and products together and removing $E$, $E^*$, $Y_1$, and $Y_2$, we obtain the reaction $S_1+S_2\rightarrow P_1+P_2$. Similarly, the anti-clockwise cycle gives the reaction $P_1+P_2\rightarrow S_1+S_2$ (consistent with \eqref{ex:introred}).
The rate function of each reaction is determined from the edge labels of the graph.
 We describe an  algorithm   to find the reduced reaction network and the rate functions after Theorem \ref{rednet}.

Note that the reduced rate functions in \eqref{ex:introred} have `mass-action form' in the sense that the reactions can only occur in the presence of the reactant species. This holds generally for the reduced reaction network (cf. Section~\ref{BasicProp}).

The outline of the paper is as follows. In Section \ref{preliminaries} we introduce background material. In Section \ref{elimination} we recall some results concerning elimination of variables in \cite{Fel_elim}. The reduced reaction network is derived in Section \ref{reduction}. Properties of the reduced reaction network  in relation to the kinetics (including mass-action) and conservation laws are presented
in Section \ref{BasicProp}. This concludes the core of the paper.
Sections \ref{severalsteps}, \ref{sec:temkin2} and \ref{examples} discuss respectively iterative elimination, comparison with previous results (the approach in \cite{Temkin1} and networks with intermediates) and post-translational modification networks.
Finally, Section \ref{sec:proofs} contains  proofs.


\section{Preliminaries}\label{preliminaries}

In this section we introduce necessary concepts from graph theory and reaction network theory.

We let $\R_{\geq 0}$ and $\R_{>0}$ denote the sets of nonnegative and positive real numbers respectively, and define $\R^n_{\geq 0}$ and $\R^n_{>0}$ accordingly.
For $x,y\in \R^n$, $x\cdot y$ denotes the scalar product associated with the Euclidean norm. Further,  $\langle v_1,\dots,v_r\rangle$ denotes the vector subspace generated by  $v_1,\dots,v_r\in \R^n$.

\paragraph{\bf Graphs, multidigraphs and spanning trees. }\label{subsec:graphs}
Let $\mmG=(\mmN, \mmE)$ be a directed graph (digraph) with node set $\mmN$ and edge set  $\mmE$. By abuse of  notation,  we write $e\in \mmG$ whenever $e\in \mmE$.
A \textbf{spanning tree} $\tau$ is a directed subgraph of $\mmG$ with node set $\mathcal{N}$ and such that the underlying undirected graph is connected and acyclic. A spanning tree is \emph{rooted} at the node $N$ if $N$ is the only node with no outgoing edges.

The graph $\mmG$ is \textbf{strongly connected} if there is a directed path from $N_1$ to $N_2$ for any pair of nodes $N_1$, $N_2$. Any directed path from $N_1$ to $N_2$ in a strongly connected graph can be extended to a spanning tree rooted at $N_2$. A \textbf{cycle} is a closed directed path $N_{i_1}\rightarrow N_{i_2}\rightarrow \cdots \rightarrow N_{i_n}\rightarrow N_{i_1}$ with no repeated nodes apart from the initial and terminal nodes. By definition all cycles are directed. 

If $\pi\colon\mmE \rightarrow R$ is a labeling of $\mmG$ with values in some ring $R$, then any subgraph $\mathcal{H}$ of $\mmG$ inherits a labeling from $\mmG$.  We extend the function $\pi$ to the set of subgraphs of $\mmG$ by defining
\[
\pi(\mathcal{H})=\prod_{e\in\mathcal{H}}\pi(e).
\]

A \textbf{multidigraph} $\mmG$ is a pair of finite sets $(\mmN, \mathcal{E})$ equipped  with two functions:
\begin{equation}\label{defmultigr}
s\colon\mathcal{E} \rightarrow \mmN\qquad t\colon\mathcal{E} \rightarrow \mmN.
\end{equation}
The elements of $\mmN$ are called nodes, the elements of $\mmE$ are called edges, and the functions 
$s,t$ are the source and target function, respectively.
The function $s$ assigns to each edge the source node of the edge and the function $t$ assigns to each edge the target node of the edge. In a multidigraph both self-edges (edges $e$ with $t(e)=s(e)$) and parallel edges (edges  $e_1,e_2$ with $t(e_1)=t(e_2)$ and $s(e_1)=s(e_2)$) are possible. 

Spanning trees, cycles and labels for a multidigraph are defined analogous to those of a digraph.  Note that the unique spanning tree of a multidigraph with one node and one self-edge is the node itself.

We  associate  a digraph $\widehat{\mmG}$ with a  multidigraph $\mmG$ by removing self-edges and collapsing 
parallel edges into one edge. 
That is, only one of the parallel edges between two nodes is kept. 
If  $\mmG$ is labeled, then so is $\widehat{\mmG}$ and the label of  an edge in $\widehat{\mmG}$ is the sum of the labels of the parallel edges  in $\mmG$ with the same source and target. If a  node in $\mmG$  is only connected to itself  then it is not included in  $\widehat{\mmG}$.
A formal definition of $\widehat{\mmG}$ is given in  Subsection \ref{sec:proofspreliminaries}.

\paragraph{\bf Reaction networks. }
A \textbf{reaction network} on a finite set $\mathcal{S}$ is a multidigraph $(\mmC, \mmR)$ where
\begin{enumerate}[(i)]
\item $\mathcal{S}=\{S_1,\dots,S_n\}$ is called the \textbf{species} set. It is equipped with an order that provides a canonical isomorphism $\R^{\mathcal{S}} \cong \R^n$.
\item $\mathcal{C}\subset \mathbb{R}_{\geq 0}^{n}$ is called the set of \textbf{complexes}.
\item $\mathcal{R}=\{r_1,\dots,r_{\ell}\}$ is called the set of \textbf{reactions}. 
\end{enumerate} 
The source function in \eqref{defmultigr} is denoted as $y\colon\mmR \rightarrow \mmC$ and assigns to each reaction $r_i$ its\textbf{ reactant} $y_{r_i}$. The target function in \eqref{defmultigr} is denoted as $y'\colon\mmR \rightarrow \mmC$ and assigns to each reaction $r_i$ its \textbf{product} $y'_{r_i}$.

Elements of $\R^{\mathcal{S}}$ are identified with linear combinations of species. Hence, under the isomorphism $\R^{\mathcal{S}} \cong \R^n$ in (i),  we write a complex as a linear combination of the species. For instance, the complex $(1,0,1)\in\mmC$, which corresponds to the element in $\R^{\mathcal{S}}$ assigning  $1$ to $S_1$ and $S_3$ and $0$ to $S_2$, is written as $S_1+S_3$.

It is assumed that $y_r\neq y'_r$ for all reactions $r\in\mmR$, and that every complex $\eta\in\mmC$ is  either the reactant or the product of some reaction $r\in\mmR$.  That is, the multidigraph  $(\mmC, \mmR)$ contains no self-edges and no isolated nodes.

Given a complex $\eta\in \mathcal{C}\subset \R^n_{\geq 0}$ we call  $\eta_i$  the \textbf{stoichiometric coefficient} of $S_i$ in $\eta$. 
We say that $\eta$ \textbf{involves} $S_i$ if $\eta_i\neq 0$,  and that a reaction $r$ \textbf{involves} $S_i$ if $S_i$ is involved in the reactant or product of $r$.
We say that a pair of species $S_i,S_j\in\mathcal{S}$, $i\neq j$, \textbf{interact} 
if they are both involved in the same reactant or product of a reaction. Equivalently, if $\eta_i,\eta_j\neq 0$, for some complex $\eta$.

It is not common to allow for multiple reactions between  the same reactant and product. It is however mathematically convenient for our purposes, as will be clear in Section \ref{reduction}. Note that, if $\mmG$ is a reaction network in our sense, then $\widehat{\mmG}$ is a reaction network in the standard sense.

 We often give a reaction network by listing its reactions. The set of complexes $\mmC$ and the set of species $\mmS$ is  easily found from the reactions.

\begin{example}\label{runex}
The following
\[
S_1+S_4 \ce{<=>} S_5 \ce{->} S_2+S_4 \ce{->} S_3+S_4
\]
is a reaction network on the set of species $\mmS=\{S_1,S_2,S_3,S_4,S_5\}$ with set of complexes
$\mmC=\{ S_1+S_4, S_5,S_2+S_4, S_3+S_4\}.$
\end{example}

\paragraph{\bf Dynamical systems. } 
A \textbf{kinetics} for a reaction network $(\mmC, \mmR)$ is a function
\[
\kappa\colon \Omega \rightarrow \R^{\ell}_{\geq 0}\qquad x\mapsto  (\kappa_{r_1}(x),\dots,\kappa_{r_{\ell}}(x)),
\]
where $\R^n_{>0}\subseteq \Omega\subseteq \R^n_{\geq 0}$, such that $\kappa(\R^n_{>0})\subseteq \R^{\ell}_{>0}$. 
The component $\kappa_{r_i}(x)$ of $\kappa(x)$ is called the \textbf{rate function} of the reaction $r_i\in \mmR$.

\textbf{Mass-action kinetics} is  defined by the following rate functions:
\[
\kappa_r(x)=k_rx^{y_r}=k_r\prod\limits_{i=1}^nx_i^{(y_r)_i},\qquad r\in\mmR,
\]
where $k_r>0$ is  called the reaction rate constant of the reaction $r$. By convention, $0^0=1$.

 The kinetics $\kappa$ provides a labeling of the reaction network.  
Since the set of reactions is ordered, we often denote the rate functions as $\kappa_i(x)$ instead of $\kappa_{r_i}(x)$.
In the examples, we label reactions with their rate functions.  In the particular case of mass-action kinetics, we simply use the reaction rate constants $k_i$ as labels.

We denote by $x_i$ the  concentration of species $S_i$ and let $x=(x_1,\dots ,x_n)$ be the vector of concentrations. In specific examples, the species are denoted by letters such as $E,P$ and  the concentrations are denoted by $x_E, x_P$, respectively. 

For a reaction network $(\mmC,\mmR)$ on $\mmS$ and a kinetics $\kappa(x)$,  we let 
\begin{equation}\label{eq:gi}
g_i(x)= \sum_{r\in \mathcal{R}}\kappa_r(x)(y'_r-y_r)_i,\qquad i=1,\dots,n, \quad x\in\Omega.
\end{equation}
The evolution of the species  concentration  in time is modelled by the following system of ODEs:
\begin{equation}\label{eq:ODE}
\dot{x}=g(x), \qquad x\in \Omega,
\end{equation}
where $\dot{x}=(\dot{x}_1,\dots,\dot{x}_n)$, $g(x)=(g_1(x),\dots,g_n(x))$, and where the derivative is with respect to time. Explicit reference to time is omitted.

The \textbf{steady states} of the system \eqref{eq:ODE} are the solutions to the system $$g(x)=0,\qquad x\in \Omega.$$ 

\begin{remark}\label{rk:hat}
Consider a reaction network $\mmG$ with a kinetics $\kappa$ and the associated reaction network $\widehat{\mmG}$ with the induced kinetics $\widehat{\kappa}$ (the induced labeling of $\widehat{\mmG}$).
Then the ODE system associated with $(\mmG,\kappa)$ agrees with the ODE system associated with $(\widehat{\mmG},\widehat{\kappa})$.
\end{remark}

\begin{remark}\label{rk:kineticszero}
In typical models of biochemical reaction systems, the rate function of a reaction  vanishes whenever the concentration of one of the reactant species is zero. In particular, this guarantees  invariance of  the non-negative orthant under \eqref{eq:ODE}. However, we do not need  this assumption for our results to hold.  In Section \ref{kinetics} we discuss some results that follow from making this assumption.
\end{remark}

\paragraph{\bf Conservation laws. }
The \textbf{stoichiometric subspace} of a network $(\mmC,\mmR)$ is the  vector subspace of $\mathbb{R}^n$  given by
\[
S=\langle y'_r-y_r\st r\in \mathcal{R}\rangle\subset \R^n.
\]
If $\omega=(\omega_1,\dots,\omega_n)\in S^{\bot}$, then it follows from  \eqref{eq:gi}  and \eqref{eq:ODE} that $\omega\cdot \dot{x}=0$. Thus, for any  trajectory there is a constant $T\in\R$ such that 
$$T=\omega\cdot x= \sum\limits_{i=1}^n\omega_ix_i.$$ 
This equation is called the \textbf{conservation law} with total amount $T\in \R$, corresponding to $\omega\in S^{\bot}$. We also say that 
$\omega\cdot x$ is \textbf{conserved}. 
A set of conservation laws  
\[
\{ T_1=\omega^1\cdot x, \ \dots \ ,\ T_l=\omega^l\cdot x \}
\]
is \textbf{minimal} if  $\omega^1,\dots,\omega^l$ form a basis of $S^{\bot}$. Then, the trajectory with initial concentration $x_0$  is confined to the linear space with equations
\[
T_i=\omega^i\cdot x, \quad \text{with }\quad T_i=\omega^i\cdot x_0, \quad \text{for }i=1,\dots,l.
\]

\begin{example}\label{runex:2}
Consider Example \ref{runex} and let $\kappa$ be a kinetics:
\[
S_1+S_4 \ce{<=>[\kappa_1(x)][\kappa_2(x)]} S_5 \ce{->[\kappa_3(x)]} S_2+S_4 \ce{->[\kappa_4(x)]} S_3+S_4.
\]
The corresponding ODE system  is:
\begin{align*}
\dot{x}_1=&-\kappa_{{1}}(x)+\kappa_{{2}}(x), & \dot{x}_2=&\kappa_{{3}}(x)-\kappa_{{4}}(x), &   \dot{x}_3=&\kappa_{{4}}(x),\\
\dot{x}_4=&-\kappa_{{1}}(x)+\kappa_{{2}}(x)+\kappa_{{3}}(x),  &
\dot{x}_5=&\kappa_{{1}}(x)-\kappa_{{2}}(x)-\kappa_{{3}}(x),
\end{align*}
and a minimal set of conservation laws consists of
\begin{equation}\label{CLrunex}
x_1+x_2+x_3+x_5=T_1,\hspace{15pt} x_4+x_5=T_2.
\end{equation}
Since concentrations are nonnegative, $T_1,T_2\geq 0$.
\end{example}

\section{Elimination of variables}\label{elimination}

In this section we introduce some  results from \cite{Fel_elim} about linear elimination of variables.
The goal in \cite{Fel_elim} is to express the concentrations of some  of the species at 
steady state in terms  of the concentrations of the other species.  This is done for sets of species, say $\mmU$, that form a noninteracting set (Definition \ref{def:nonint}), assuming  the kinetics   is linear with respect to the  concentrations of the species in $\mmU$ (Definition \ref{defUlinear}). 
The elimination makes use of a special multidigraph, called  $\mmG_{\mmU}$, which is defined in Definition \ref{defgraph} (see also Theorem \ref{elim}). This multidigraph is also  used to define the reduced reaction network in Section \ref{reduction}.

Let a  reaction network $(\mmC,\mmR)$ on a set $\mmS$ be given. 

\begin{definition}\label{def:nonint}
A subset $\mmU\subset \mmS$ is \textbf{noninteracting} if it contains no pair of interacting species, and the stoichiometric coefficients of the species in $\mmU$ in all complexes are either $0$ or $1$.
\end{definition}

Let $\mmU\subseteq \mmS$ be a noninteracting subset of species. For simplicity we let $\mmU=\{U_1,\dots ,U_m\}$ and  $\mmU^c=\mmS\setminus \mmU=\{S_1,\dots,S_p\}$  (with $p=n-m$), such that $\mmS=\mmU^c\cup \mmU$. We order   $\mmS$ as $\mmS=\{S_1,\dots,S_p, U_1,\dots,U_m\}$. From now on, we let $x_i$ be the  concentration of $S_i\in\mmU^c$ and $x=(x_1,\dots,x_p)$.  
Similarly, we let $u_i$ be the  concentration of $U_i\in\mmU$ and $u=(u_1,\dots, u_m)$. Hence, the rate functions are functions of $(x,u)$: $\kappa_r(x,u)$.

We let 
\begin{equation}
\label{projection}
\zeta\colon \R^n\rightarrow \R^p,\qquad \rho\colon\R^n\rightarrow \R^m
\end{equation}
be the projections onto the first $p$ coordinates and  last $m$ coordinates of $\R^n$, respectively.
Further, we  let
$\mmRu$ be the set of reactions that involve species in $\mmU$  in the reactant and/or in the product:
\[
\mmRu=\{r\in \mmR \st   \rho(y_r)\neq 0 \text{ or }  \rho(y'_r)\neq 0\}.
\]
Any reaction in $\mmR_\mmU$ involves at most one species in $\mmU$ in the reactant and at most one in the product. Hence,  the vectors $\rho(y_r)$ and $\rho(y'_r)$ have at most one nonzero component (equal to one) for any  $r\in\mmRu$.

We next impose some regularity conditions on the rate functions, namely,  that the rate functions  are linear in the concentrations of the species in $\mmU$ in a specific way. We assume $\Omega$ takes the form $\Omega\times\R_{\ge 0}^m$, where $\R^p_{>0}\subseteq \Omega\subseteq \R^p_{\geq 0}$ (making an abuse of notation).

\begin{definition}\label{defUlinear}
Let $\kappa\colon \Omega\times \R^m_{\geq 0}\rightarrow \R^{\ell}_{\geq 0}$ be a kinetics with $\R^p_{>0}\subseteq \Omega\subseteq \R^p_{\geq 0}$. The kinetics $\kappa$ is \textbf{$\mmU$-linear} if, for each $r\in \mmRu$, there exists a function $v_r\colon\Omega\rightarrow \R_{\geq 0}$ such that $v_r(x)> 0$ for all $x\in\Omega$ and
\[
\kappa_r(x,u)=\left\{\begin{array}{ll} 
u_iv_r(x) & \text{ if }r\in\mmRu\text{ and }\rho(y_r)_{i}=1\\
v_r(x)& \text{ if }r\in\mmRu \text{ and }\rho(y_r)= 0.
\end{array}\right.
\]
\end{definition}

A $\mmU$-linear kinetics can be interpreted as being mass-action with respect to the species in $\mmU$.

\begin{definition}\label{defgraph} 
Let $\mmU\subseteq \mmS$ be a set of noninteracting species and $\kappa$ a $\mmU$-linear kinetics.
We define the labeled \textbf{multidigraph} $\mmG_{\mmU}=(\mmN_{\mmU}, \mmE_{\mmU})$ by 
\[
\mmN_{\mmU}=\begin{cases}
\mmU& \text{if } \rho(y_r)\neq 0\text{ and }  \rho(y'_r)\neq 0 \text{ for all }r\in \mmRu\\
\mmU\cup \{*\} & \text{otherwise}
\end{cases} 
\]
and
\begin{align*}
\mathcal{E}_\mmU= & \ \{U_i\ce{->[v_r(x)]}U_j\st r\in \mmRu\text{ with }\rho(y_r)_{i}\neq 0\text{ and }  \rho(y'_r)_{j}\neq 0\}\ \cup \\
	& \ \{U_i\ce{->[v_r(x)]}*\st r\in \mmRu\text{ with }\rho(y_r)_{i}\neq 0\text{ and }  \rho(y'_r)= 0\}\ \cup \\
	& \ \{*\ce{->[v_r(x)]}U_i\st r\in \mmRu\text{ with }\rho(y'_r)_{i}\neq 0\text{ and }  \rho(y_r)= 0\},
\end{align*}
 where the last two subsets in the definition of $\mathcal{E}_\mmU$ are empty when $\mmN_{\mmU}=\mmU$.
\end{definition}

Observe that $\mmG_{\mmU}$ might contain parallel edges between any pair of nodes and self-edges for nodes other than $*$. Each edge $e$ in the multidigraph $\mmG_{\mmU}$ corresponds to a reaction in $\mmR_\mmU$, $r(e)$. Moreover, since $\mmU$ is  noninteracting, the multidigraph $\mmG_{\mmU}$ has exactly one edge $e(r)$ for each reaction $r$ in $\mmRu$.
This gives rise to two bijective functions 
\begin{equation}\label{defre}
\xymatrix{
\mmE_\mmU \ar@<.5ex>[r]^-{r} & \mmRu \ar@<.5ex>[l]^-{e},
}
\end{equation}
such that $r\circ e=\text{id}_{\mmRu}$ and $e\circ r=\text{id}_{\mmE_\mmU}$.
In the examples, the functions $r,e$ are implicitly given by the 
subindices of the  functions $v_r(x)$: the edge label $v_i(x)$ indicates that the edge corresponds to the reaction $r_i$.

The graph $\widehat{G}_\mmU$ (not taking into account the labelling) agrees with the embedded network of $(\mmC,\mmR)$ given by $\mmU$, as defined in \cite{joshi-shiu-II} considering $*$ as the zero complex.

\begin{example}\label{noncutexample}
Consider the network 
\[
\begin{array}{ll}
S_1\ce{->[\kappa_1(x,u)]}S_2\ce{->[\kappa_2(x,u)]}U_1\ce{->[\kappa_3(x,u)]} U_2 &\hspace{15pt}
S_3+U_1\ce{->[\kappa_4(x,u)]} U_3+ S_1 \\
 S_3+U_2\ce{->[\kappa_5(x,u)]}S_2+U_3 &\hspace{15pt} U_3 \ce{->[\kappa_6(x,u)]}S_3.
\end{array}
\]
The set  $\mmU=\{U_1,U_2,U_3\}$  is  noninteracting.
A  $\mmU$-linear kinetics  $\kappa(x,u)$  takes the form
\begin{align*}
\kappa(x,u) &= \big( \kappa_1(x,u), v_2(x),  u_1v_3(x), u_1v_4(x),  u_2v_5(x), u_3v_6(x) \big)
\end{align*}
and the corresponding multidigraph $\mmG_{\mmU}$ is 
\begin{center}
\begin{tikzpicture}[inner sep=1pt]
\node (X3) at (0,-2) {$U_1$};
\node (X4) at (2,-2) {$U_2$};
\node (X6) at (4,-2) {$U_3$.};
\node (*) at (2,-3) {$*$};
\draw[->] (X3) to[out=0,in=180] node[above,sloped] {\footnotesize $v_3(x)$} (X4);
\draw[->] (X4) to[out=0,in=180] node[above,sloped] {\footnotesize $v_5(x)$} (X6);
\draw[->] (X3) to[out=30,in=150] node[above,sloped] {\footnotesize $v_4(x)$} (X6);
\draw[->] (*) to[out=160,in=-45] node[above,sloped] {\footnotesize $v_2(x)$} (X3);
\draw[->] (X6) to[out=225,in=20] node[above,sloped] {\footnotesize $v_6(x)$} (*);
\end{tikzpicture}
\end{center}
\end{example}

\begin{example} \label{2c1r}
Consider a phosphorylation mechanism for a substrate $S$ such that phosphorylation may be catalyzed by two different enzymes, $E_1$ and $E_2$, with mass-action kinetics:
\begin{align*}
S+E_1 &  \ce{<=>[k_1][k_2]}  Y_1  \ce{->[k_3]}  S_p+E_1& \quad S+E_2  &\ce{<=>[k_4][k_5]}  Y_2  \ce{->[k_6]}  S_p+E_2.
\end{align*}
We consider the noninteracting set $\mathcal{U}=\{E_1,E_2,Y_1,Y_2\}$. Since we assumed mass-action kinetics, the kinetics is $\mmU$-linear. The  multidigraph $\mmG_{\mathcal{U}}$ is 

\begin{center}
\begin{tikzpicture}[inner sep=1pt]
\node (E1) at (-2,0) {$E_1$};
\node (E2) at (2,0) {$E_2$};
\node (Y1) at (0,0) {$Y_1$};
\node (Y2) at (4,0) {$Y_2$};
\draw[->] (E1) to[out=10,in=170] node[above,sloped] {\footnotesize $\scriptstyle k_1x_S$} (Y1);
\draw[->] (Y1) to[out=190,in=-10] node[below,sloped] {\footnotesize $\scriptstyle k_2$} (E1);
\draw[->] (Y1) to[out=220,in=-40] node[below,sloped] {\footnotesize $\scriptstyle k_3$} (E1);
\draw[->] (E2) to[out=10,in=170] node[above,sloped] {\footnotesize $\scriptstyle k_4x_S$} (Y2);
\draw[->] (Y2) to[out=190,in=-10] node[below,sloped] {\footnotesize $\scriptstyle k_5$} (E2);
\draw[->] (Y2) to[out=220,in=-40] node[below,sloped] {\footnotesize $\scriptstyle k_6$} (E2);
\end{tikzpicture}
\end{center}
\end{example}

Define the \textbf{support} of  a vector $\omega=(\omega_1,\dots,\omega_n)\in \R^n$ as
 $$\supp(\omega)=\{S_i\st \omega_i\neq 0\}\subseteq \mmS.$$

 A subset $\mmH\subseteq \mmU$ of a noninteracting set is itself a noninteracting set and a $\mmU$-linear kinetics is $\mmH$-linear. Hence the multidigraph $\mmG_\mmH$ is defined.
 
Any connected component of  $\mmG_\mmU$ agrees with $\mmG_\mmH$ for some $\mmH\subseteq \mmU$.
If $*\not\in\mmN_\mmH$,
then the reactant and the product of any reaction in $\mmR_\mmH$    involve each exactly one species  in $\mmH$. 
In this case there is a vector   
$\omega^\mmH \in S^\perp$ such that  
\begin{equation}\label{eq:conscut} 
\zeta(\omega^\mmH)=0 \qquad \text{and}\qquad\rho(\omega^\mmH)_i = \begin{cases}
0 & \text{if } U_{i} \notin \mmH \\ 1 & \text{if } U_{i} \in \mmH
\end{cases}\quad i=1,\dots,m,
\end{equation}
that is, the coordinates of $\omega^\mmH$ corresponding to the species in $\mmH$ are all one and only these are non-zero. 

According to \cite{Fel_elim},  any vector in $S^\bot$ with support in $\mmU$ is a linear combination of 
 the vectors $\omega^\mmH$ for all connected components $\mmG_\mmH$  such that $*\not\in\mmN_\mmH$.
Since these connected components 
are disjoint, the vectors $\omega^\mmH$ are linearly independent.
These vectors give rise to   \textbf{conservation laws}:
\begin{equation}\label{eq:conslawU}
\sum_{i\st U_i\in \mmH } u_i = T_\mmH.
\end{equation}

In Example~\ref{2c1r}, there are two connected components without the node $*$. They give rise to two conservation laws, namely,  $x_{E_1}+ x_{Y_1} =T_1$ and $x_{E_2}+ x_{Y_2} =T_2$. Any other conservation law involving only the concentrations of $E_1,E_2,Y_1,Y_2$ is a linear combination of these two conservation laws. The only connected component of $\mmG_\mmU$ in Example~\ref{noncutexample} contains $*$. Hence there is no conservation law involving only concentrations of species in $\mmU$.

Let $N$ be a node of  $\mmG_\mmU$ and $\mmG_\mmH$ be the connected component that contains it, that is $N\in \mmN_\mmH$. Let $\Theta(N)$ be the set of spanning trees of  $\mmG_{\mmH}$  rooted at $N$.
\begin{definition}\label{def:linelim}
Let $(\mmC,\mmR)$ be a reaction network on a set   $\mmS$  and $\kappa$ a kinetics. A set  $\mmU\subseteq\mmS$  is said to be \textbf{linearly eliminable} if
\begin{enumerate}[(i)]
\item $\mmU$ is noninteracting.
\item For every connected component $\mmG_\mmH$ of $\mmG_\mmU$ it holds:  $\Theta(*)\neq \emptyset$ if $*\in \mmN_\mmH$ and   $\cup_{N\in\mmH} \Theta(N)\not=\emptyset$ if $*\not\in \mmN_\mmH$. 
\item $\kappa$ is $\mmU$-linear.
\end{enumerate}
\end{definition}
Condition (ii) states that $\mmG_\mmH$ admits a spanning tree   rooted at $*$, if $*\in \mmN_\mmH$, and  rooted at an arbitrary  node, if $*\not\in \mmN_\mmH$. Any strongly connected component of $\mmG_\mmU$  fulfills this condition. 

 Let $\mmU\subseteq\mmS$ be a linearly eliminable set. 
For each connected component $\mmG_\mmH$ of $\mmG_{\mmU}$, let  $q_\mmH(x)$ be the function
\begin{align}
&q_\mmH(x)=\frac{T_\mmH}{\sum\limits_{N \in \mmH,\tau \in \Theta(N)}\pi(\tau) }, && \text{if }*\not\in\mmN_\mmH, 
\text{ where }T_\mmH\in \R_{\geq 0},\label{qcut}\\
&q_\mmH(x)=\frac{1}{\sum\limits_{\tau \in \Theta(*)}\pi(\tau)}, &&\text{if }*\in\mmN_\mmH. 
\label{qnoncut}
\end{align}
The function $q_\mmH(x)$ is a function of $x$   since $\pi(\tau)$  depends on $x$. We omit the explicit reference to $T_\mmH$ in $q_\mmH(x)$ and to $x$ in $\pi(\tau)$ for convenience. 
When the multidigraph $\mmG_{\mmU}$ is connected, we omit the subscript $\mmH=\mmU$.

Definition~\ref{def:linelim}(ii) guarantees that the denominators are never empty sums. By positivity of $v_r(x)$, the function $q_\mmH(x)$ is positive for all $x\in \R^p_{>0}$. However, 
the denominator of $q_\mmH(x)$ 
might vanish at  points at the boundary of $\Omega$. Henceforth, we let   $\widetilde{\Omega}$  be the domain where $q_\mmH(x)$ is well defined for all connected components $\mmG_\mmH$.  We have $\R^p_{>0}\subseteq\widetilde{\Omega}\subseteq \Omega$.

We will make use of \cite[Propositions 8.4, 8.6]{Fel_elim}, but reformulate them in the terminology of this paper
(see also Section \ref{sec:proofselimination}).

\begin{theorem}[Elimination \cite{Fel_elim}]
\label{elim}
Let $(\mmC,\mmR)$ be a  reaction network on a set $\mmS$, $\kappa$  a kinetics and assume  $\mmU\subseteq \mmS$ is  a linearly eliminable set. 
Consider the system of equations
\begin{equation}\label{eq:uss}
\dot{u}  =0, \quad \text{and}\quad  T_\mmH  = \sum_{i\st U_i\in \mmH } u_i, \quad T_\mmH \ge 0,
\end{equation}
for all $\mmH\subseteq \mmU$ such that $\mmG_\mmH$ is a connected component of $\mmG_\mmU$ with $*\notin \mmN_\mmH$.
Equation \eqref{eq:uss}  holds for  $(x,u)\in\widetilde{\Omega}\times \R^m_{\geq 0}$
 if and only if 
\[
u_i=q_{\mmH(i)}(x)\sum\limits_{\tau \in \Theta(U_i)}\pi(\tau), \hspace{20pt}\text{for all } i=1\dots m,
\]
where $q_{\mmH(i)}(x)$ is given by \eqref{qcut} or \eqref{qnoncut}, and $ \mmH(i)\subseteq \mmU$ is such that $U_i\in\mmH(i)$ for $\mmG_{\mmH(i)}$ a connected component of $\mmG_\mmU$.
\end{theorem}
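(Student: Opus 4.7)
The plan is to cast $\dot u = 0$ as a linear system in $u$ with $x$-dependent coefficients, decompose it by connected components of $\mmG_\mmU$, and solve each component with a Matrix-Tree argument. Because $\mmU$ is noninteracting and $\kappa$ is $\mmU$-linear, every reaction in $\mmRu$ contributes linearly in $u$ to $\dot u$; collecting the contributions edge by edge in $\mmG_\mmU$, the system $\dot u_i=0$ for all $i$ takes the form
\begin{equation*}
\Bigl(\sum_{e\,:\,s(e)=U_i} v_{r(e)}(x)\Bigr)\,u_i \;=\; \sum_{e\,:\,t(e)=U_i,\,s(e)=U_j\in\mmU} v_{r(e)}(x)\,u_j \;+\; \sum_{e\,:\,t(e)=U_i,\,s(e)=*} v_{r(e)}(x),
\end{equation*}
which is a linear system $L(x)u=b(x)$. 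Since no reaction in $\mmRu$ couples two species belonging to different connected components of $\mmG_\mmU$, the matrix $L(x)$ is block diagonal with respect to those components, and $b(x)$ is supported on blocks whose component contains $*$.

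The next step is to solve component by component. For a component $\mmG_\mmH$ with $*\in\mmN_\mmH$, the block equation is non-homogeneous and its matrix is a weighted Laplacian of $\mmG_\mmH$ with $*$ as the distinguished root; Definition~\ref{def:linelim}(ii) asserts $\Theta(*)\neq\emptyset$, which by the Matrix-Tree theorem is equivalent to invertibility, and Cramer's rule yields $u_i = \sum_{\tau\in\Theta(U_i)}\pi(\tau)\big/\sum_{\tau\in\Theta(*)}\pi(\tau)$, matching \eqref{qnoncut}. For a component $\mmG_\mmH$ with $*\notin\mmN_\mmH$, the block is homogeneous; by the Matrix-Tree theorem (again using Definition~\ref{def:linelim}(ii) to produce a nonzero cofactor) its kernel is one-dimensional and spanned by the vector whose $i$-th entry is $\sum_{\tau\in\Theta(U_i)}\pi(\tau)$. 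Imposing the conservation law $T_\mmH=\sum_{U_i\in\mmH} u_i$ fixes the scalar to $q_\mmH(x)$ as defined in \eqref{qcut}. Both directions of the ``if and only if'' follow at once: in each block the spanning-tree vector is the unique solution, so it both satisfies and is forced by the equations.

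The main obstacle is the Matrix-Tree step in the non-standard setting of a labeled multidigraph with possible self-loops, parallel edges, and a distinguished node $*$. I would either invoke the corresponding propositions of \cite{Fel_elim} that the theorem cites, or give an independent induction on $|\mmN_\mmH|$ via cofactor expansion at a leaf of a spanning tree. It is also worth verifying explicitly that self-loops at $U_i$ contribute $+u_iv_r(x)$ to both the production and the consumption of $U_i$ and hence drop out of $L(x)$, consistent with the passage from $\mmG_\mmU$ to $\widehat{\mmG}_\mmU$, and that parallel edges between the same pair of nodes simply appear as summed labels inside the spanning-tree products. After these bookkeeping checks the theorem is immediate.
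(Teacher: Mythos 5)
Your proposal is correct, but it is a genuinely different (and more self-contained) route than the one the paper takes. The paper does not reprove the elimination result at all: it imports Propositions 8.4 and 8.6 of \cite{Fel_elim}, which are stated for the collapsed digraph $\widehat{\mmG}_{\mmU}$, and the only thing actually proved in Subsection \ref{sec:proofselimination} is Lemma \ref{trees2graphs}, the bookkeeping fact that sums of spanning-tree labels over $\mmG_\mmU$ and over $\widehat{\mmG}_{\mmU}$ coincide (parallel edges distribute out as summed labels, self-edges never enter a spanning tree). Your ``bookkeeping checks'' at the end are exactly that lemma; everything before them is a from-scratch argument that the paper outsources. The substance of your argument is sound: the block-diagonal decomposition of $L(x)u=b(x)$ over connected components is justified because no reaction in $\mmRu$ involves two species of $\mmU$, Cramer's rule plus the all-minors Matrix-Tree theorem handles the components containing $*$, and for the homogeneous blocks you correctly identify the one place where Definition \ref{def:linelim}(ii) is load-bearing --- a nonzero spanning-tree cofactor forces the weighted Laplacian of the component to have corank exactly one, so the kernel is spanned by the tree vector and the conservation law $T_\mmH=\sum_{U_i\in\mmH}u_i$ pins down the scalar to $q_\mmH(x)$; without that hypothesis the component could have several terminal strongly connected pieces and the solution would not be unique. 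What your approach buys is independence from \cite{Fel_elim} at the price of having to prove (or carefully cite) the weighted, rooted, all-minors Matrix-Tree theorem for multidigraphs; what the paper's approach buys is a three-line proof at the price of the reader having to trust the reference. If you flesh out the Matrix-Tree step --- the induction on $|\mmN_\mmH|$ by cofactor expansion that you sketch would do it --- your proof is complete.
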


The theorem states that if the species in $\mmU$ are at steady state with respect to the species in $\mmU^c$, then  the concentrations $u_i$ can be given as rational functions in the labels of $\mmG_{\mmU}$, for fixed total amounts $T_\mmH$. 
The labels are independent of the  concentrations  of the species in $\mmU$. In virtue of the non-negativity of the labels of $\mmG_{\mmU}$, the  concentrations of the species in $\mmU$ are also non-negative for $x\in\widetilde{\Omega}$. 

For future reference, we define the function $\varphi\colon  \widetilde{\Omega}\to \R_{\ge 0}^m$:
\begin{equation}\label{eq:varphi}
 \varphi(x)=(\varphi_1(x),\dots,\varphi_m(x)),\qquad \varphi_i(x)= q_{\mmH(i)}(x)\hspace{-10pt}\sum\limits_{\tau \in \Theta(U_i)}\pi(\tau). 
 \end{equation}

If a connected component  $\mmG_{\mmH}$ of $\mmG_{\mmU}$ is strongly connected, then there is at least one spanning tree rooted at each node, and hence $\varphi(x)$ is not identically zero in any coordinate corresponding to a species in $\mmH$.
If  $\mmG_{\mmH}$ is not strongly connected, then at least one  node is not the root of a spanning tree, and hence at least one  component of $\varphi(x)$ is identically zero. In the latter case, all steady states lay at the boundary of the positive orthant.

\begin{example}\label{runexelim}
Consider the network in Example \ref{runex} and the noninteracting set $\mmU=\{S_4,S_5\}$. 
To adapt the notation to the present setting, we let $U_1=S_4$ and $U_2=S_5$. 
We assume the network is endowed with a $\mmU$-linear kinetics  
\begin{align*}
\kappa(x,u) & = \big( u_1 v_1(x),  u_2v_2(x), u_2v_3(x), u_1v_4(x) \big).
\end{align*}
The multidigraph $\mmG_{\mmU}$ is
\begin{center}
\begin{tikzpicture}[inner sep=1pt]
\node (X2) at (0,-2) {$U_1$};
\node (X3) at (2,-2) {$U_2$.};
\draw[->] (X2) to[out=10,in=170] node[above,sloped] {\scriptsize $v_1(x)$} (X3);
\draw[->] (X3) to[out=190,in=-10] node[below,sloped] {\scriptsize $v_2(x)$} (X2);
\draw[->] (X3) to[out=220,in=-40] node[below,sloped] {\scriptsize $v_3(x)$} (X2);
\draw[->] (X2) edge [loop above] node[above,sloped] {\scriptsize $v_4(x)$} (X2);
\end{tikzpicture}
\end{center}
The multidigraph $\mmG_\mmU$  is strongly connected and thus the set $\mmU$ is linearly eliminable. We apply Theorem \ref{elim} to conclude that $\dot{u}_1=0$, $\dot{u}_2=0$ and $u_1+u_2=T_2$ hold if and only if:
\[
u_{1}=q(x) \big( v_{3}(x)+v_{2}(x) \big), \quad 
u_{2}=q(x) v_{1}(x), \quad q(x)= \dfrac {T_2 }{v_{{1}}(x)+v_{{2}}(x)+v_{{3}}(x)}.
\]
\end{example}

\begin{example}
The set $\mmU=\{U_1,U_2,U_3\}$ in Example \ref{noncutexample} endowed with $\mmU$-linear kinetics is linearly eliminable since 
 $\mmG_\mmU$  is strongly connected. By Theorem \ref{elim}, at steady state it holds that:
\[
\arraycolsep=10pt
\begin{array}{ll}
u_1=q(x) v_2(x)v_5(x)v_6(x), & u_2=q(x) v_2(x)v_3(x)v_6(x),  \\ [5pt]
u_3=q(x) v_2(x)(v_3(x)+v_4(x))v_5(x),&q(x)=\dfrac{1}{(v_3(x)+v_4(x))v_5(x)v_6(x)}.
\end{array}
\]
\end{example}

\section{Reduced reaction network}\label{reduction}

Assume that a reaction network $(\mmC,\mmR)$ on a set $\mmS$, a kinetics $\kappa$ and a linearly eliminable set  $\mmU\subseteq \mmS$  are given. 
If $u=\varphi(x)$, as given in  \eqref{eq:varphi}, is substituted into the ODE system $(\dot{x}, \dot{u})=g(x,u)$, given in    \eqref{eq:ODE},  then
$\dot{u}=0$ by construction and  an ODE system in the variables $x_1,\dots,x_p$ is obtained:
\begin{equation}\label{eq:newODE}
\dot{x}=\tilde{g}(x),\qquad x\in \widetilde{\Omega}\subseteq \R^p_{\geq 0},
\end{equation}
where
\[
\tilde{g}(x) =\zeta (g(x,\varphi(x))).
\]
That is, for $i=1,\dots,p$,
\[
\tilde{g}_i(x) =g_i\left(x_1,\dots, x_p,\ q_{\mmH(1)}(x)\hspace{-6pt}\sum\limits_{\tau \in \Theta(U_1)}\!\!\!\pi(\tau), \dots, q_{\mmH(m)}(x)\hspace{-6pt}\sum\limits_{\tau \in \Theta(U_m)}\!\!\!\pi(\tau)\right).
\]

In this section we prove that the   ODE system \eqref{eq:newODE}  is the ODE system associated with a reaction network on the species set  $\{S_1,\dots,S_p\}$. The reactions   and the kinetics of this  reaction network are graphically obtained from the  multidigraph $\mmG_{\mmU}$, following the  procedure given below Theorem \ref{rednet}.

The intuition is as explained in the introduction. Cycles in the multidigraph correspond to sequences of reactions such that the net production of the species in $\mmU$ is zero. However, not all cycles give rise to a reduced reaction. 
In order to give the precise formulation of the reduced reaction network (Definition \ref{defrednet}, Theorem \ref{rednet}), we will need the following. 
For a cycle $\sigma$  in $\mmG_\mmU$, let $\mmG_{\mmH(\sigma)}$ be the connected component that contains the cycle. In particular, the nodes of  $\sigma$ form a subset of $\mmH(\sigma)$. Let $\Gamma(\sigma)$ be the set of subgraphs $G$ of $\mmG_{\mmU}$ such that $\sigma\subseteq G$   and such that removing any edge $e\in\sigma$ from $G$ creates a spanning tree of $\mmG_{\mmH(\sigma)}$, rooted at the source $s(e)$ of $e$.
 Specifically, given an edge $e\in \sigma$,  define
\begin{equation}\label{eq:gammasigma} 
\Gamma(\sigma) = \{ \tau \cup e \st \sigma\setminus e \subset \tau, \ \tau\in \Theta(s(e)) \}.
\end{equation}
The set $\Gamma(\sigma)$ does not depend on the chosen edge $e$ (Proposition \ref{constGamma} in Section \ref{sec:proofsrednet}).
We further define the function
\begin{equation}\label{defPi}
\Pi(\sigma)= \sum\limits_{\gamma\in \Gamma(\sigma)} \pi(\gamma) = \pi(\sigma)\sum\limits_{\gamma\in \Gamma(\sigma)} \pi(\gamma\setminus\sigma),
\end{equation}
which is the sum of the labels of the graphs in $\Gamma(\sigma)$.

\begin{remark}\label{rmk:compGammasigma}
To find $\Gamma(\sigma)$, it is sufficient to consider any edge $e\in\sigma$ and find all spanning trees rooted at $s(e)$, containing $\sigma\setminus e$. Therefore, $\Gamma(\sigma)\neq \emptyset$ if and only if the path $\sigma\setminus e$ can be extended to a spanning tree of $\mmG_{\mmH(\sigma)}$ rooted at $s(e)$. In particular, if $\mmG_{\mmH(\sigma)}$ is strongly connected, then $\Gamma(\sigma)\neq \emptyset$.

If $\sigma$ is a cycle that contains all nodes of $\mmG_{\mmH(\sigma)}$, then $\Gamma(\sigma)=\{\sigma\}$ and thus $\Pi(\sigma)=\pi(\sigma)$. Indeed, in this case  $\sigma\setminus e$ is the unique spanning tree of $\mmG_{\mmH(\sigma)}$ containing $\sigma\setminus e$.
\end{remark}

Let $\Delta$ be the  set of cycles $\sigma$ in $\mmG_\mmU$ such that 
 $$\sum\limits_{e\in\sigma}\zeta(y'_{r(e)}-y_{r(e)})\neq 0\quad\textrm{and}\quad\Gamma(\sigma)\neq \emptyset.$$ 
The first condition states that the net-production of some species in $\mmU^c$ is non-zero over the reactions associated with the edges of the cycle.

\begin{definition}\label{defrednet}
 Let $(\mmC,\mmR)$ be a reaction network on a set  $\mmS$, 
 $\kappa$ a kinetics and $\mmU\subseteq \mmS$ a linearly eliminable set.
The \textbf{reduced reaction network obtained by elimination of $\mmU$} is the reaction network $(\widetilde{\mmC}, \widetilde{\mmR})$ on the  species set $\mmU^c$ with kinetics $\widetilde{\kappa}$ defined on $\widetilde{\Omega}$, such that $\widetilde{\mmR}  = \wR_1\cup \wR_2$, where
\begin{align*}
\wR_1 &= \left\{\widetilde{r}\colon\zeta(y_r)\rightarrow \zeta(y'_r)\st r\in \mmR\setminus \mmRu \right\}, &\\
\wR_2 & =\left\{\widetilde{r}_{\sigma}\colon \sum\limits_{e\in\sigma}\zeta(y_{r(e)})\rightarrow \sum\limits_{e\in\sigma}\zeta(y'_{r(e)})\st \sigma\in \Delta \right\},
\end{align*}
$\widetilde{\mmC}$ is the set of source and target nodes of $\widetilde{\mmR}$, 
\[
\widetilde{\kappa}_{\widetilde{r}}(x):= \begin{cases}
\kappa_r(x,\varphi(x)) & \textrm{if } \widetilde{r}=\zeta(y_r)\rightarrow \zeta(y'_r)\in \wR_1,\ \textrm{for } r\in \mmR\setminus \mmRu
, \\
q_{\mmH(\sigma)}(x)\Pi(\sigma) &  \textrm{if } \widetilde{r} = \widetilde{r}_{\sigma} \in \wR_2.
 \end{cases}
\]
and $\widetilde{\Omega}\subseteq \Omega$ is the maximal set for which $q_\mmH(x)$ is well defined for all connected components $\mmG_\mmH$ of $\mmG_\mmU$. 
\end{definition}

In short, $\widetilde{\mmR}_1$ consists of the reactions that do not involve species in $\mmU$ and $\widetilde{\mmR}_2$ consists of new reactions defined by the cycles in $\Delta$. The procedure outlined below Theorem~\ref{rednet} clarifies how the reactions in $\widetilde{\mmR}_2$ are obtained.

We are now ready to present our main result:
the interpretation of the ODE system \eqref{eq:newODE} as the ODE system of the reduced reaction network obtained by elimination of $\mmU$ defined above. The proof is given in Subsection~\ref{sec:proofsrednet}.

\begin{theorem}[{\bf Reduced reaction network}]\label{rednet}
Let $(\mmC,\mmR)$ be a reaction network on a set $\mmS$, 
 $\kappa$ a kinetics and $\mmU\subseteq \mmS$ a linearly eliminable set.
Then the  ODE system in \eqref{eq:newODE}, 
$$\dot{x}_i=g_i(x,\varphi(x)),\qquad x\in \widetilde{\Omega}\subseteq \R^p_{\geq 0},\quad i=1,\dots,p,
$$
is the ODE system associated with the reduced reaction network obtained by elimination of $\mmU$.
\end{theorem}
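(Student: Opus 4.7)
My plan is to verify, coordinate by coordinate, that $\tilde{g}_i(x)=\sum_{\widetilde{r}\in\wR}\widetilde{\kappa}_{\widetilde{r}}(x)(y'_{\widetilde{r}}-y_{\widetilde{r}})_i$, by splitting $\mmR=(\mmR\setminus\mmRu)\sqcup\mmRu$ and matching each piece to the $\wR_1$ and $\wR_2$ contributions respectively. The part coming from $\mmR\setminus\mmRu$ is immediate from the definitions: those reactions do not involve species in $\mmU$, so $(y'_r-y_r)_i=\zeta(y'_r-y_r)_i$ for every $i\le p$, and Definition~\ref{defrednet} sets $\widetilde{\kappa}_{\widetilde{r}}(x)=\kappa_r(x,\varphi(x))$. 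The real work is the $\mmRu$ identity.

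Using the bijection $r\leftrightarrow e(r)$ from \eqref{defre}, I would translate $\sum_{r\in\mmRu}\kappa_r(x,\varphi(x))\zeta(y'_r-y_r)_i$ into a sum over edges $e\in\mmE_\mmU$. By $\mmU$-linearity, the rate associated with $e$ is $\alpha_e(x)\pi(e)$, where $\alpha_e(x)=\varphi_j(x)$ if $s(e)=U_j$ and $\alpha_e(x)=1$ if $s(e)=*$. A key unification step is that, in both cases,
\[
\alpha_e(x)=q_{\mmH(e)}(x)\!\!\sum_{\tau\in\Theta(s(e))}\!\!\pi(\tau),
\]
where $\mmH(e)\subseteq\mmU$ is such that $\mmG_{\mmH(e)}$ is the connected component of $\mmG_\mmU$ containing $e$: for $s(e)=U_j$ this is Theorem~\ref{elim}, while for $s(e)=*$ it reduces to $q_{\mmH(e)}(x)\sum_{\tau\in\Theta(*)}\pi(\tau)=1$, which is built into \eqref{qnoncut}.

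The combinatorial heart of the argument is then the tree-plus-edge identity
\[
\sum_{e\in\mmE_{\mmG_\mmH}}\!\!\pi(e)\!\!\sum_{\tau\in\Theta(s(e))}\!\!\pi(\tau)\,f(e)\;=\;\sum_{\sigma\,\subset\,\mmG_\mmH}\!\Pi(\sigma)\sum_{e\in\sigma}f(e),
\]
applied with $f(e)=\zeta(y'_{r(e)}-y_{r(e)})_i$ for each connected component $\mmG_\mmH$ of $\mmG_\mmU$. I would prove it via a bijection between pairs $(\tau,e)$ with $\tau\in\Theta(s(e))$ and pairs $(\gamma,e)$ with $\gamma\in\Gamma(\sigma)$ and $e\in\sigma$: given $(\tau,e)$, the graph $\tau\cup e$ contains a unique cycle $\sigma$ through $e$, formed by $e$ together with the unique directed path in $\tau$ from $t(e)$ back to the root $s(e)$; conversely, for any $\gamma\in\Gamma(\sigma)$ and any $e\in\sigma$, Proposition~\ref{constGamma} ensures $\gamma\setminus e\in\Theta(s(e))$. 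Since $e\notin\tau$ (the root has no outgoing edges in $\tau$), we have $\pi(\tau\cup e)=\pi(\tau)\pi(e)$, and the two sides of the identity count the same weighted configurations.

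Multiplying through by $q_\mmH(x)$, summing over connected components, and noting that cycles $\sigma\notin\Delta$ contribute nothing (either because their net stoichiometric change on $\mmU^c$ vanishes, or because $\Gamma(\sigma)=\emptyset$ makes $\Pi(\sigma)=0$), the sum collapses to $\sum_{\sigma\in\Delta}q_{\mmH(\sigma)}(x)\Pi(\sigma)\bigl(\sum_{e\in\sigma}\zeta(y'_{r(e)}-y_{r(e)})\bigr)_i$, which is precisely the $\wR_2$ contribution prescribed in Definition~\ref{defrednet}. The main obstacle will be making the tree-plus-edge bijection watertight: checking that $\tau\cup e$ contains exactly one cycle and that this cycle lies entirely within $\mmG_\mmH$, and handling self-loops at species nodes and edges incident to $*$ cleanly. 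Once that combinatorial step is pinned down, the rest is bookkeeping that matches the edge-by-edge stoichiometry to the reactants and products of $\widetilde{r}_\sigma$.
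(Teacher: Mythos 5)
Your proposal is correct and follows essentially the same route as the paper: the same split into the $\wR_1$ and $\wR_2$ contributions, the same unification of the $\mmU$-linear rates as $q_{\mmH}(x)\bigl(\sum_{\tau\in\Theta(s(e))}\pi(\tau)\bigr)\pi(e)$, and the same bijection between pairs $(\tau,e)$ with $\tau\in\Theta(s(e))$ and triples $(\sigma,e,\gamma)$ with $\gamma\in\Gamma(\sigma)$, followed by discarding the cycles outside $\Delta$. The combinatorial details you flag (uniqueness of the cycle in $\tau\cup e$ and membership of $\tau\cup e$ in $\Gamma(\sigma)$) are exactly what the paper settles via Proposition~\ref{constGamma} and the discussion of $\Gamma$ preceding it.
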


The fact that we can find a reaction network for the reduced ODE system implies that the system does not have negative cross effects in the sense of \cite{Szili}. 

\medskip
\noindent
{\bf Procedure. }  
We provide the following procedure to determine the reduced reaction network obtained by elimination of a set of species  $\mmU$, in virtue of Definition~\ref{defrednet} and Theorem~\ref{rednet}:
\begin{enumerate}[(1)]
\item Check whether  $\mmU$ is linearly eliminable: 
\begin{enumerate}[(i)] 
\item Check that the set $\mmU$ is noninteracting and that $\kappa$ is $\mmU$-linear.
\item Compute the multidigraph $\mmG_{\mmU}$ (as in Definition \ref{defgraph}) and check that each connected component $\mmG_\mmH$  of $\mmG_\mmU$ admits a spanning tree  rooted at an arbitrary node, 
if $*$ is not a node of $\mmG_\mmH$ and rooted at $*$ otherwise.
\end{enumerate}
If either (i) or (ii) fails, stop.
\item *Compute $q_\mmH(x)$ for each connected component $\mmG_\mmH$  of $\mmG_\mmU$  and $\varphi(x)$, using \eqref{qcut}, \eqref{qnoncut} and \eqref{eq:varphi}.
\item Compute the reactions in $\wR_1$: for each reaction that is not in $\mmR_{\mmU}$ do
\begin{enumerate}[(i)] 
\item Add the projection of the reaction by $\zeta$ to $\wR_1$.
\item *Compute its rate function by replacing $u$ by $\varphi(x)$ in the original rate function.
\end{enumerate}
\item Compute the reactions in $\wR_2$: for each cycle $\sigma$ in $\mmG_{\mmU}$ do
\begin{enumerate}[(i)]
\item \label{cancelincycle} List the reactions corresponding to the edges in the cycle and add the species in $\mmU^c$ in  the reactants and products  to obtain the reactant and product of the new candidate reaction $\widetilde{r}_\sigma$. 
\item \label{reactprod}  If $y_{\widetilde{r}_\sigma} \neq y'_{\widetilde{r}_\sigma}$ proceed. Otherwise go to the next cycle.
\item \label{gammasigma} Compute $\Gamma(\sigma)$. If $\Gamma(\sigma)\neq \emptyset$, add $\widetilde{r}_\sigma$ to $\wR_2$ and proceed. Otherwise go to the next cycle.
\item *The rate function of the new reaction is  $q_{\mmH(\sigma)}(x)\Pi(\sigma)$, cf. \eqref{defPi}.
\end{enumerate}
\end{enumerate}

If we are only interested  in  the reactions of the reduced reaction network and not  the kinetics, then the steps marked with  * can be ignored. Further, in this case, if $\mmG_{\mmH(\sigma)}$ is strongly connected, then $\Gamma(\sigma)\neq \emptyset$ and we do not need to find $\Gamma(\sigma)$ explicitly.

\begin{remark}\label{trivialreact}
Let $r_{i_1},\dots, r_{i_l}\in\mmR$ be reactions  in a reaction network $(\mmC, \mmR)$ that form a cycle in the network. Then, 
\[
\sum\limits_{j=1}^l\big(y'_{r_{i_j}}-y_{r_{i_j}}\big)=0.
\]
Hence, step \eqref{reactprod} is not fulfilled and 
the cycle does not belong to $\Delta$. 
This applies in particular to any cycle arising from a reversible reaction $\eta_1\ce{<=>}\eta_2$.
\end{remark}

\begin{example}\label{ex:runningcycles}
For illustration,  we apply the method to the reaction network in Example \ref{runex}, with $\mmU=\{S_4,S_5\}=\{U_1,U_2\}$ and a $\mmU$-linear kinetics.
\begin{enumerate}[(1)]
\item $\mmU$ is  linearly eliminable: see Example~\ref{runexelim}.
\item  $q(x)$ and $\varphi(x)$ are given in Example~\ref{runexelim}.
\item $\mmR_\mmU=\mmR$. Therefore $\wR_1=\emptyset$.
\item We compute the reactions in $\wR_2$. $\mmG_\mmU$ has three cycles:

\medskip
 
\begin{minipage}[h]{0.22\linewidth}
\begin{tikzpicture}[inner sep=1pt]
\node (X2) at (0,0) {$U_1$};
\node (X3) at (2,0) {$U_2$};
\node (s) at (0,0.5) {$\sigma_1$:};
\draw[->] (X2) to[out=10,in=170] node[above,sloped] {\scriptsize $v_1(x)$} (X3);
\draw[->] (X3) to[out=220,in=-40] node[below,sloped] {\scriptsize $v_3(x)$} (X2);
\end{tikzpicture}
\end{minipage} \qquad
\begin{minipage}[h]{0.69\linewidth}
\begin{enumerate}[(i)]
\item 
  \qquad $ 
\begin{array}[t]{cccc}
r_1\colon& S_1+\cancel{U_1} &\ce{->} & \cancel{U_2}\\
r_3\colon& \cancel{U_2} &\ce{->} &  S_2+\cancel{U_1} \\
\hline
\widetilde{r}_\sigma\colon & S_1 &\ce{->} &S_2
\end{array}
$
\item $y_{\widetilde{r}_\sigma} = S_1 \neq S_2 = y'_{\widetilde{r}_\sigma}$.
\item $\Gamma(\sigma_1) = \{ \sigma_1\} \neq \emptyset$ (see also Remark~\ref{rmk:compGammasigma}).
The reaction $S_1\ce{->} S_2$ belongs to $\wR_2$.
\item Rate function: $q(x)v_1(x)v_3(x)$
\end{enumerate}

\end{minipage} 

\rule{0.9\textwidth}{.4pt}

\medskip
\begin{minipage}[h]{0.22\linewidth}
\begin{tikzpicture}[inner sep=1pt]
\node (X2) at (0,-2) {$U_1$};
\node (X3) at (2,-2) {$U_2$};
\node (s) at (0,-1.5) {$\sigma_2$:};
\draw[->] (X2) to[out=10,in=170] node[above,sloped] {\scriptsize $v_1(x)$} (X3);
\draw[->] (X3) to[out=190,in=-10] node[below,sloped] {\scriptsize $v_2(x)$} (X2);
\end{tikzpicture}
\end{minipage} \qquad
\begin{minipage}[h]{0.69\linewidth}
\begin{enumerate}[(i)]
\item 
  \qquad $ 
\begin{array}[t]{cccc}
r_1\colon& S_1+\cancel{U_1} &\ce{->} & \cancel{U_2}\\
r_2\colon& \cancel{U_2} &\ce{->} & S_1+ \cancel{U_1}\\
\hline
\widetilde{r}_\sigma\colon & S_1 &\ce{->} &S_1
\end{array}
$
\item $y_{\widetilde{r}_\sigma} = S_1= S_1 = y'_{\widetilde{r}_\sigma}$
(see also Remark~\ref{trivialreact}). $\widetilde{r}_\sigma$ is not added to $\mmR_2$.
\end{enumerate}
\end{minipage} 

\medskip
\rule{0.9\textwidth}{.4pt}

\medskip
\begin{minipage}[h]{0.22\linewidth}
\begin{tikzpicture}[inner sep=1pt]
\node (X2) at (1,-2) {$U_1$};
\node (s) at (0,-1.5) {$\sigma_3$:};
\draw[->] (X2) edge [loop above] node[above,sloped] {\scriptsize $v_4(x)$} (X2);
\end{tikzpicture}
\end{minipage}\qquad 
\begin{minipage}[h]{0.69\linewidth}
\begin{enumerate}[(i)]
\item  \qquad
$
\begin{array}[t]{cccc}
r_4\colon& S_2+\cancel{U_1} &\ce{->} & S_3+\cancel{U_1}\\
\hline
\widetilde{r}_\sigma\colon & S_2 &\ce{->} &S_3
\end{array}
$
\item $y_{\widetilde{r}_\sigma} = S_2 \neq S_3 = y'_{\widetilde{r}_\sigma}$.
\item  
\hspace{-0.4cm}
\begin{tabular}{l}
 \begin{tikzpicture}[inner sep=1pt]
 \node (Gamma) at (-1.2,2) {$\Gamma(\sigma_3)=$};
 \draw[snake=brace] (-0.4,1.2) -- (-0.4,2.8);
  \draw[snake=brace] (4.9,2.8) -- (4.9,1.2);
\node (X2) at (0,2) {$U_1$};
\node (X3) at (2,2) {$U_2$,};
\draw[->] (X3) to[out=190,in=-10] node[below,sloped] {\scriptsize $v_2(x)$} (X2);
\draw[->] (X2) edge [loop above] node[above,sloped] {\scriptsize $v_4(x)$} (X2); 
\node (X21) at (2.6,2) {$U_1$};
\node (X31) at (4.6,2) {$U_2$};
\draw[->] (X31) to[out=220,in=-40] node[below,sloped] {\scriptsize $v_3(x)$} (X21);
\draw[->] (X21) edge [loop above] node[above,sloped] {\scriptsize $v_4(x)$} (X21);
\node (X31) at (5.3,2) {$\neq \emptyset$};
\end{tikzpicture}
\end{tabular}
The reaction $S_2\ce{->} S_3$ belongs to $\wR_2$.
\item Rate function: $q(x) (v_2(x)+v_3(x))v_4(x)$
\end{enumerate}
\end{minipage}

\medskip
\rule{0.9\textwidth}{.4pt}
\end{enumerate}

Hence, the reduced reaction network consists of two reactions:
\[
S_1 \ce{->[q(x)v_1(x)v_3(x)]} S_2 \ce{->[q(x)(v_2(x)+v_3(x))v_4(x)]} S_3.
\]
\end{example}

\begin{remark}\label{simplifspecies}
If $\mmU$ is noninteracting, each reaction involves at most one species in $\mmU$ in the reactant  and product. 
Each node $U_i$ of a cycle $\sigma$ in $\mmG_\mmU$ is the source (respectively, target) of exactly one edge of $\sigma$. Therefore,
   in step (\ref{cancelincycle}), each species in $\mmU$ that is a node of a cycle appears exactly once as reactant and once as product.  
\end{remark}

\begin{remark}\label{simpl}
We have defined a reaction network as a multidigraph to simplify the presentation, in particular in connection with Theorem \ref{rednet}, and hence also allowed multiple reactions between the same complexes.
The reduced reaction network 
 might, however, be further simplified by collapsing multiple reactions into one reaction without changing the ODE system, that is, using $\widehat{\mmG}$ as explained in Remark \ref{rk:hat}. The rate function of a  collapsed  reaction is  the sum of the rate functions being joined. See Example~\ref{2c1rcont}.
\end{remark}

\begin{example}\label{2c1rcont}
We continue with Example \ref{2c1r}. The two components of the graph $\mmG_\mmU$ are strongly connected, thus $\mmU$ is linearly eliminable.  The set $\Delta$ consists of two cycles: $\sigma_1$, with the edges with labels $k_1x_S$ and $k_3$, and $\sigma_2$, with the edges with labels $k_4x_S$ and $k_6$.
Both cycles give rise to the reaction $S\ce{->}S_p$ and the rate 
functions are respectively
$$\wk_1(x)=q_1(x) k_1k_3x_S\quad\textrm{ and }\quad\wk_2(x)=q_2(x) k_4k_6x_S,$$
for $q_1(x)=T_1 \Big/(k_1x_S+k_2+k_3)$ and $q_2(x)=T_2 \Big/(k_4x_S+k_5+k_6)$.
By Remark \ref{simpl}, we might further simplify the network to
$
S\ce{->[\wk_1(x)+\wk_2(x)]}S_p.
$
\end{example}

\begin{example}\label{cyclenoreact}
Consider the following reaction network:
\begin{align*}
S_1+U_1  \ce{->[\kappa_1\hspace{-1pt}(x,u)]} S_2+U_2 \quad
S_3+U_2  \ce{->[\kappa_2\hspace{-1pt}(x,u)]} S_4+U_1 \quad 
 S_4+U_2  \ce{->[\kappa_3\hspace{-1pt}(x,u)]} S_3+U_3.
\end{align*}
For the noninteracting set $\mmU=\{U_1,U_2,U_3\}$ and a $\mmU$-linear kinetics, the multidigraph $\mmG_{\mmU}$  
\begin{center}
\begin{tikzpicture}[inner sep=1pt]
\node (X1) at (0,0) {$U_1$};
\node (X4) at (2,0) {$U_2$};
\node (X8) at (4,0) {$U_3$};
\draw[->] (X1) to[out=0,in=180] node[above,sloped] {\footnotesize $v_1(x)$} (X4);
\draw[->] (X4) to[out=200,in=340] node[below,sloped] {\footnotesize $v_2(x)$} (X1);
\draw[->] (X4) to[out=0,in=180] node[above,sloped] {\footnotesize $v_3(x)$} (X8);
\end{tikzpicture}
\end{center}
has a spanning tree rooted at $U_3$. Thus $\mmU$ is linearly eliminable. 
Consider the unique cycle $\sigma$ of $\mmG_{\mmU}$. The graph $\sigma\setminus e$ with $e\colon U_1\rightarrow U_2$ 
 cannot be extended to a spanning tree rooted at $s(e)=U_1$. Therefore, $\Gamma(\sigma)=\emptyset$ and the reduced reaction network obtained by elimination of $\mmU$ has no reactions.
Since the graph $\mmG_{\mmU}$ does not have a spanning tree rooted at a node other than $U_3$,  we have that $\varphi(x)=(0,0,T)$, for the conservation law $u_1+u_2+u_3=T$. 
Since the kinetics is $\mmU$-linear and $U_3$ is not involved in any reactant, it follows that $\kappa(x,\varphi(x))=(0,0,0)$. Thus the ODE system of the reduced reaction network \eqref{eq:newODE} is $\dot{x}=0$.
\end{example}

\section{Basic properties of the reduced reaction network}\label{BasicProp}

In this section we study some basic properties of the reduced reaction network in Definition \ref{defrednet}. In particular, Section \ref{kinetics} is concerned with the kinetics of the reduced reaction network. Section \ref{conservation laws} is concerned with conservation laws. 

In what follows, a reaction network $(\mmC,\mmR)$ on $\mmS$, 
a kinetics $\kappa$ and a linearly eliminable set $\mmU\subseteq \mmS$ are given. 
We let $(\widetilde{\mmC},\widetilde{\mmR})$ 
be the reduced reaction network on $\mmU^c$ obtained by elimination of $\mmU$.

\subsection{Rate functions and kinetics}\label{kinetics}

In this section 
we prove that if the original rate functions vanish whenever one of the concentrations of a reactant is zero, then so do the rate functions of the reduced reaction network (Proposition \ref{prop:pisigmastandard}). A refined result is obtained when the kinetics of the original reaction network is  mass-action  (Proposition \ref{kineticsmassaction}). 
The proofs of Propositions~\ref{prop:pisigmastandard}, \ref{prop:special} and \ref{kineticsmassaction} are given in Subsection~\ref{sec:proofsBasicProp}.

 As noted in Remark~\ref{rk:kineticszero}, a biochemically meaningful rate function fulfils $\kappa_r(x)=0$ whenever  the concentration of one of the reactant species is zero, that is,
$\supp(y_r) \not\subseteq \supp(x)$ implies $\kappa_r(x)=0. $
Based on this, we say that a function $f$ defined on a subset $V\subseteq \R^n$ is  \textbf{standard} for $r\in \mmR$ if 
\begin{equation}\label{eq:standard}
\text{for all} \quad x\in V\colon \quad \supp(y_r) \not\subseteq \supp(x) \quad  \Rightarrow\quad  f(x)=0. 
\end{equation}
We say that the function is \textbf{fully standard} if the reverse implication in \eqref{eq:standard} holds as well .
The rate function $\kappa_r$ is  (fully) standard if it is a  (fully)  standard function for $r$. 
 Similarly,  the kinetics $\kappa$ is (fully) standard if the rate function of all $r\in \mmR$ is (fully) standard. 
 
The next proposition states that if $\kappa(x,u)$ is standard, then  so is   $\widetilde{\kappa}(x)$.

\begin{proposition}[Standard kinetics]\label{prop:pisigmastandard} 
\begin{enumerate}[(i)] 
\item Let $\sigma\in \Delta$ and consider the reaction $\widetilde{r}_\sigma\in \widetilde{\mmR}_2$.
If the kinetics $\kappa(x,u)$ is standard (resp. fully standard) on $\Omega\times\R_{\ge 0}^m$,  then  $\pi(\sigma)$  is
standard (resp. fully standard) on $\widetilde{\Omega}$ for $\widetilde{r}_\sigma$.  
\item If the kinetics $\kappa(x,u)$ is standard, then so is $\widetilde{\kappa}(x)$.
\end{enumerate}
\end{proposition}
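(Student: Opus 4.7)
The plan is to first extract from the combination of $\mmU$-linearity and standardness a standardness property of each edge label $v_r(x)$ of $\mmG_\mmU$ with respect to the projected reactant $\zeta(y_r)$, then propagate this multiplicatively through $\pi(\sigma)$ for part (i), and finally split part (ii) into the two cases $\widetilde{r}\in \wR_1$ and $\widetilde{r}\in \wR_2$.

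For the preliminary claim about edge labels, I would take $r\in \mmRu$ with $\rho(y_r)_i = 1$; Definition \ref{defUlinear} writes $\kappa_r(x,u) = u_i v_r(x)$ and $\supp(y_r) = \supp(\zeta(y_r)) \cup \{U_i\}$. Evaluating at any $u \in \R^m_{\ge 0}$ with $u_i > 0$, I have $\supp(y_r) \subseteq \supp(x,u) \iff \supp(\zeta(y_r)) \subseteq \supp(x)$, so standardness of $\kappa_r$ is equivalent to
\[
\supp(\zeta(y_r)) \not\subseteq \supp(x) \quad \Longrightarrow \quad v_r(x) = 0 \quad \text{on } \widetilde{\Omega},
\]
with the converse added if $\kappa$ is fully standard (picking $u_i>0$ makes $\kappa_r(x,u)=0$ equivalent to $v_r(x)=0$). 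The case $\rho(y_r) = 0$ is immediate from $\kappa_r(x,u) = v_r(x)$ and $y_r = \zeta(y_r)$. Combining with $\pi(\sigma) = \prod_{e \in \sigma} v_{r(e)}(x)$ and $\supp(y_{\widetilde{r}_\sigma}) = \bigcup_{e \in \sigma} \supp(\zeta(y_{r(e)}))$ then proves (i) in both directions: the standard direction because $\supp(y_{\widetilde{r}_\sigma}) \not\subseteq \supp(x)$ forces some factor $v_{r(e_0)}(x)$ to vanish, and the fully standard converse because $\pi(\sigma) = 0$ forces some $v_{r(e_0)}(x) = 0$, hence $\supp(\zeta(y_{r(e_0)})) \not\subseteq \supp(x)$.

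For part (ii) I would handle the two reaction types separately. A reaction $\widetilde{r} \in \wR_1$ comes from $r \in \mmR \setminus \mmRu$, so $y_r = \zeta(y_r)$ has empty intersection with $\mmU$; consequently $\supp(y_r) \subseteq \supp(x,\varphi(x)) \iff \supp(y_r) \subseteq \supp(x)$, and the identity $\wk_{\widetilde{r}}(x) = \kappa_r(x,\varphi(x))$ transfers standardness directly from $\kappa_r$. For $\widetilde{r}_\sigma \in \wR_2$, the rate is $\wk_{\widetilde{r}_\sigma}(x) = q_{\mmH(\sigma)}(x)\Pi(\sigma)$, and since by \eqref{defPi} $\Pi(\sigma)$ contains $\pi(\sigma)$ as a factor, standardness of $\wk_{\widetilde{r}_\sigma}$ follows from standardness of $\pi(\sigma)$ established in (i). The only delicate point I anticipate is the preliminary reduction from standardness of $\kappa_r$ on $\Omega \times \R^m_{\ge 0}$ to standardness of $v_r$ on $\widetilde{\Omega}$; the $\mmU$-linear structure is precisely what makes this transparent, since $u$ is an independent argument and $u_i > 0$ may be chosen at will.
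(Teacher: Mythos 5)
Your proof is correct and follows essentially the same route as the paper: factor $\pi(\sigma)$ into edge labels, use $\supp(y_{\widetilde{r}_\sigma})=\bigcup_{e\in\sigma}\supp(\zeta(y_{r(e)}))$ for part (i), and split part (ii) according to $\wR_1$ versus $\wR_2$, with $\pi(\sigma)$ a factor of $\Pi(\sigma)$ handling the latter. The only difference is that you make explicit, as a preliminary claim, the transfer of (full) standardness from $\kappa_r$ to the edge label $v_r$ via $\mmU$-linearity by choosing $u_i>0$ --- a step the paper's proof uses implicitly --- and this is a valid and welcome clarification rather than a deviation.
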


The kinetics of the reduced reaction network might not be fully standard, even if the kinetics of the original network is fully standard.
For a reaction $\widetilde{r}\in \widetilde{\mmR}_1$ defined by $r\in \mmR\setminus \mmR_\mmU$, the rate function $\widetilde{\kappa}_{\widetilde{r}}(x)$ is fully standard
provided $\kappa_r(x,u)$ is. 
However, other things might occur for reactions $\widetilde{r}_\sigma\in \widetilde{\mmR}_2$, where  the rate function might    vanish if
  $$\sum\limits_{\gamma\in \Gamma(\sigma)} \pi(\gamma\setminus\sigma)=0\quad \text{ for some}\quad x\in \widetilde{\Omega},$$
 cf. equation~\eqref{defPi}.  
We characterise when this happens in Proposition~\ref{prop:special} below.
Let $\widetilde{r}_{\sigma}$ be a reaction and $x\in \R^p_{\geq 0}$ be 
positive in the coordinates corresponding to the reactants in $\widetilde{r}_{\sigma}$.
We show below that the rate function of $\widetilde{r}_{\sigma}$ vanishes at $x$, if and only if, the steady state concentrations of the species in the cycle $\sigma$ are zero whenever the concentrations of the species in $\mmU^c$ are given by  $x$. 

\begin{proposition}[Fully standard kinetics]\label{prop:special}
Assume that $\kappa(x,u)$ is fully standard on $\Omega\times\R_{\ge 0}^m$.
Let $\sigma\in \Delta$  
and $x\in \widetilde{\Omega}$ be such that  
$\supp(y_{\widetilde{r}_{\sigma}})\subseteq \supp(x)$. Then the following two statements are equivalent:
\begin{enumerate}[(i)]
\item $\Pi(\sigma)$ vanishes at $x$. 
\item $\varphi_j(x)=0$, for all $j\in \{1,\dots,m\}$ such that  $U_j\in \mmU$ is a node in $\sigma$.
\end{enumerate} 
\end{proposition}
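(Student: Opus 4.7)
The key initial observation is that the support hypothesis combined with full standardness of $\kappa$ forces $\pi(\sigma)(x)>0$. Indeed, $y_{\widetilde{r}_\sigma}=\sum_{e\in\sigma}\zeta(y_{r(e)})$ is a sum of nonnegative vectors, so $\supp(\zeta(y_{r(e)}))\subseteq\supp(y_{\widetilde{r}_\sigma})\subseteq\supp(x)$ for every $e\in\sigma$, and full standardness of $v_{r(e)}$ then yields $v_{r(e)}(x)>0$. Via the factorisation $\Pi(\sigma)=\pi(\sigma)\sum_{\gamma\in\Gamma(\sigma)}\pi(\gamma\setminus\sigma)$ from \eqref{defPi}, condition (i) is thus equivalent to the vanishing of $\pi(\gamma)(x)$ for every $\gamma\in\Gamma(\sigma)$.

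The direction (ii) $\Rightarrow$ (i) I would treat by contraposition. If $\Pi(\sigma)(x)>0$, pick $\gamma=\tau\cup e\in\Gamma(\sigma)$ with $\pi(\gamma)(x)>0$, so that $\tau\in\Theta(s(e))$ and $\pi(\tau)(x)>0$; since $s(e)$ is a node of $\sigma$ and $q_{\mmH(\sigma)}(x)>0$ on $\widetilde{\Omega}$ (assuming $T_{\mmH(\sigma)}>0$ in the cut case $*\notin\mmN_{\mmH(\sigma)}$), one concludes $\varphi_{s(e)}(x)>0$, violating (ii).

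For the direction (i) $\Rightarrow$ (ii), I would argue, again by contraposition, for each node $U_j$ of $\sigma$. Non-vanishing of $\varphi_j(x)$ supplies $\tau\in\Theta(U_j)$ with $\pi(\tau)(x)>0$. Writing the cycle as $\sigma\colon U_j=U_{i_1}\to U_{i_2}\to\cdots\to U_{i_k}\to U_{i_1}$ and setting $e'\colon U_j\to U_{i_2}$, I would construct $\tau'$ from $\tau$ by simultaneously replacing, for every $l=2,\dots,k$, the unique outgoing edge of $U_{i_l}$ in $\tau$ by the $\sigma$-edge $U_{i_l}\to U_{i_{l+1}}$ (indices taken mod $k$). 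The resulting $\tau'$ contains $\sigma\setminus e'$, has exactly one outgoing edge at every non-root node, and every edge of $\tau'$ comes from $\tau\cup\sigma$ and hence carries a positive label at $x$. Once $\tau'\in\Theta(U_j)$ is established, $\tau'\cup e'\in\Gamma(\sigma)$ would have $\pi(\tau'\cup e')(x)>0$, contradicting (i).

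The main obstacle is the acyclicity of $\tau'$. A directed cycle $C$ in $\tau'$ cannot contain $U_j$, since $U_j$ has no outgoing edge in $\tau'$; if $C$ contained any $U_{i_l}$ with $l\geq 2$, the freshly inserted $\sigma$-edges would trace $U_{i_l}\to U_{i_{l+1}}\to\cdots\to U_j$, forcing $U_j\in C$, a contradiction. Hence $C$ would avoid $\sigma\setminus\{U_j\}$ entirely, its outgoing edges would coincide with those of $\tau$, and $C$ would also be a cycle in $\tau$, contradicting the fact that $\tau$ is a spanning tree. Combined with the correct edge count ($|V|-1$, one outgoing edge per non-root node), this yields $\tau'\in\Theta(U_j)$ and closes the argument.
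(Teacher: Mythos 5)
Your proposal is correct and follows essentially the same route as the paper: the bound $\Pi(\sigma)\le \pi(e)\sum_{\tau\in\Theta(s(e))}\pi(\tau)$ for one direction, and for the other the same tree surgery that replaces, at each non-root node of $\sigma$, the outgoing edge of $\tau$ by the corresponding edge of $\sigma$. The differences are cosmetic (both directions phrased contrapositively), plus one welcome addition: you actually verify that the modified graph $\tau'$ is acyclic, a point the paper's proof asserts without argument.
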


It follows from the proposition   that the rate function of $\widetilde{r}_\sigma$ vanishes if and only if $\k_{r(e)}(x,\varphi(x))=0$ for all $e\in\sigma$ with $s(e)=U_j\in \mmU$. Indeed, the kinetics is $\mmU$-linear and hence $u_j$ is a factor of the rate function of the reaction $r(e)$.

If the vectors $x\in\widetilde{\Omega}$ for which Proposition~\ref{prop:special} applies are precisely those that satisfy $x_i=0$ for a certain $i\in\{1,\dots,p\}$,  then the reaction $\widetilde{r}_{\sigma}$ can be modified by adding $S_i$ to its reactant and product, such that $\widetilde{\k}_{\widetilde{r}_{\sigma}}$ becomes fully standard for the modified reaction. 
This modification does not alter the ODE system. We do not apply this modification here.

If the original  reaction network is endowed with mass-action kinetics, then we have a sharper result  than 
Proposition~\ref{prop:pisigmastandard}.
In particular,   the function $\pi(\sigma)$ has the form of a mass-action rate function for $\widetilde{r}_\sigma$.
Whether or not the rate function  $\widetilde{\kappa}_{\widetilde{r}_\sigma}(x)$ of the reaction is of mass-action type depends on the specific form of $q_{\mmH(\sigma)}(x)$ and $\sum\limits_{\gamma\in \Gamma(\sigma)} \pi(\gamma\setminus\sigma)$.

\begin{proposition}[Mass-action kinetics]\label{kineticsmassaction}
Assume that the reaction network $(\mmC,\mmR)$ is endowed with mass-action kinetics. For $\sigma\in \Delta$,  $$\pi(\sigma)=k_{\sigma}x^{y_{\widetilde{r}_{\sigma}}},\quad x\in\widetilde{\Omega},$$
for some constant $k_{\sigma}>0$. 
Moreover, if the reactant of $\widetilde{r}_{\sigma}$ involves the species $S_i$, then the concentration $x_i$ is a factor of $\Pi(\sigma)$ with exponent at least $(y_{\widetilde{r}_{\sigma}})_i$. 
\end{proposition}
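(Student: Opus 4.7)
The plan is to unpack the definitions and observe that the claim reduces to elementary bookkeeping with monomials. First, I would check the form of the edge labels $v_r(x)$. Under mass-action kinetics, $\kappa_r(x,u) = k_r x^{\zeta(y_r)} u^{\rho(y_r)}$ for every $r \in \mmR$. For $r \in \mmRu$ the vector $\rho(y_r)$ has at most one entry equal to $1$ (by noninteraction) and all others $0$, so comparing with Definition \ref{defUlinear} yields $v_r(x) = k_r x^{\zeta(y_r)}$, a monomial with positive coefficient $k_r$. This uniform description holds in both subcases of Definition \ref{defUlinear}, since in neither case does the $u$-factor contribute any $x$-dependence.

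For the first claim, I would multiply the monomials over the edges of $\sigma$:
\[
\pi(\sigma) = \prod_{e \in \sigma} v_{r(e)}(x) = \Big(\prod_{e \in \sigma} k_{r(e)}\Big)\, x^{\sum_{e \in \sigma} \zeta(y_{r(e)})} = k_\sigma\, x^{y_{\widetilde{r}_\sigma}},
\]
where $k_\sigma := \prod_{e \in \sigma} k_{r(e)} > 0$, and the exponent equals $y_{\widetilde{r}_\sigma}$ by the definition of the reactant of $\widetilde{r}_\sigma$ in Definition \ref{defrednet}.

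For the second claim, I would invoke the factorization $\Pi(\sigma) = \pi(\sigma) \sum_{\gamma \in \Gamma(\sigma)} \pi(\gamma \setminus \sigma)$ from \eqref{defPi}. Each $\pi(\gamma \setminus \sigma)$ is a product of the monomials $v_{r(e)}(x)$, hence itself a monomial in $x$ with positive coefficient. So the sum is a polynomial in $x$ with non-negative coefficients (no cancellation can remove any $x$-factor already present in $\pi(\sigma)$). Consequently $\pi(\sigma) = k_\sigma x^{y_{\widetilde{r}_\sigma}}$ divides $\Pi(\sigma)$ as a polynomial, and since $x_i^{(y_{\widetilde{r}_\sigma})_i}$ is a factor of $\pi(\sigma)$ whenever $(y_{\widetilde{r}_\sigma})_i > 0$, the same divisibility holds for $\Pi(\sigma)$.

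There is no genuine obstacle here; the statement is essentially a direct consequence of the monomial form of $v_r$ under mass-action, together with the factorization of $\Pi(\sigma)$. The only point requiring a moment's care is verifying that $\Gamma(\sigma)$ is nonempty by hypothesis ($\sigma \in \Delta$), so the sum $\sum_\gamma \pi(\gamma\setminus \sigma)$ is a nonempty sum of positive monomials and the divisibility statement is non-vacuous.
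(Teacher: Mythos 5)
Your proposal is correct and follows essentially the same route as the paper: identify the mass-action edge labels as $k_{r(e)}x^{\zeta(y_{r(e)})}$, multiply over the cycle to get $\pi(\sigma)=k_\sigma x^{y_{\widetilde r_\sigma}}$, and deduce the divisibility claim from the factorization of $\Pi(\sigma)$ in \eqref{defPi}. The paper merely states the last step as "a consequence of \eqref{defPi}"; your explicit remark that the remaining sum is a nonempty sum of monomials with positive coefficients (so no cancellation occurs) is a correct and welcome elaboration.
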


The proposition says that the exponents of the concentrations in $\Pi(\sigma)$  might be larger than their stoichiometric coefficients, for the species involved in the reactant of $\widetilde{r}_\sigma$.
There might also be factors that depend on concentrations of species that are not involved in the reactant, as illustrated in the next example.  
The example illustrates also the following phenomenon: when two identical reactions arising from different cycles in $\mmG_\mmU$ are joined and their respective rate functions are added (cf. Remark \ref{simpl}),
then the rate function of the reaction might become fully standard.

\begin{example}\label{extfact} 
Consider the  reaction network with mass-action kinetics:
\begin{equation}\label{reactextfact}
S_1+U_1  \ce{->[k_1]}  S_2+U_2  \quad U_2   \ce{->[k_2]}  U_1  \quad 
S_3+U_3  \ce{->[k_3]}   S_1+U_2  \quad U_2  \ce{->[k_4]}  U_3 
\end{equation}
and the noninteracting set $\mmU=\{U_1,U_2,U_3\}$. The multidigraph $\mmG_{\mmU}$ is 
\begin{center}
\begin{tikzpicture}[inner sep=1pt]
\node (S1) at (1.5,0) {$U_2$};
\node (S2) at (0,0) {$U_1$};
\node (S3) at (3,0) {$U_3.$};
\draw[->] (S2) to[out=15,in=165] node[above,sloped] {\footnotesize $\scriptstyle k_1x_1$} (S1);
\draw[->] (S1) to[out=195,in=345] node[below,sloped] {\footnotesize $\scriptstyle k_2$} (S2);
\draw[->] (S1) to[out=15,in=165] node[above,sloped] {\footnotesize $\scriptstyle k_4$} (S3);
\draw[->] (S3) to[out=195,in=345] node[below,sloped] {\footnotesize $\scriptstyle k_3x_3$} (S1);
\end{tikzpicture}
\end{center}
The reduced reaction network obtained by elimination of $\mmU$ is
\begin{equation}\label{redreactextfact}
 S_1\ce{->[\wk_1(x)]}S_2 \hspace{20pt}  
S_3\ce{->[\wk_2(x)]}S_1
 \end{equation}
where $\wk(x)=(\wk_1(x),\wk_2(x))$ is
\begin{align*}
\wk(x) & = q(x) k_1 k_3 \Big( k_2 x_1\boldsymbol{x_3},    k_4\boldsymbol{x_1} x_3\Big),  & 
q(x) & =\frac{T}{k_2k_3x_3 +   k_1k_3x_1x_3 + k_{1}k_{4}x_1}. 
\end{align*}
Concentrations that appear with an exponent larger than the  stoichiometric coefficient  of the corresponding species are marked in bold.

We have $\widetilde{\Omega}=\R^3_{\geq 0} \setminus \{x \st x_1=x_3=0\}$.
The rate function of the first reaction in \eqref{redreactextfact}  vanishes for $x_3=0$, even though  $S_3$ is not in the reactant. This reaction corresponds to the left-cycle of $\mmG_{\mmU}$, which involves $U_1,U_2$.  
By Proposition~\ref{prop:special}, the functions $\varphi_1(x),\varphi_2(x)$ must vanish  when $x_3=0$. Indeed:
\begin{align*}
\varphi(x)=& q(x) \, \big( k_2k_3\boldsymbol{x_3}, \, k_1k_3x_1\boldsymbol{x_3}\big).
\end{align*}

Consider now the reaction network \eqref{reactextfact} with an additional reaction:
\[
S_1+U_3\ce{->[k_5]}S_2+U_3.
\]
The set  $\mmU=\{U_1,U_2,U_3\}$ is also noninteracting. The new reaction defines a self-edge  at $U_3$ in $\mmG_{\mmU}$ with label $k_5x_1$, and hence the reactions in \eqref{redreactextfact}  are also reactions of the new reduced reaction network with the same kinetics.
The cycle given by the self-edge belongs to  $\Delta$ and gives rise to an additional reaction $S_1\ce{->}S_2$ with rate function $q(x)k_1k_4k_5\boldsymbol{x_1^2}$. This reaction is considered now twice with different rate functions. 
After joining the two reactions $S_1\ce{->}S_2$, the reduced reaction network is \eqref{redreactextfact} with the rate function $\wk_1(x)$ replaced by:
$$
\wk_1(x)= q(x) k_1  k_2 k_3 x_1\boldsymbol{x_3} + q(x)k_1k_4k_5\boldsymbol{x_1^2} =q(x)k_1 \boldsymbol{(k_2k_3x_3+k_4k_5x_1)}x_1.
$$
 With this transformation, the kinetics of  \eqref{redreactextfact} is fully standard in $\widetilde{\Omega}$.  
\end{example}

\subsection{Conservation laws of the two networks}\label{conservation laws}

Let $\widetilde{S}\subseteq \R^p$ be the stoichiometric subspace of the reduced reaction network. 
In this section we compare the orthogonal complement  of the stoichiometric subspaces, $S^{\bot}$ and $\widetilde{S}^{\bot}$, which define the conservation laws of the reaction networks.  We show in Theorem \ref{relCL} that the projection of $S^{\bot}$ on $\mmU^c$ is contained in $\widetilde{S}^{\bot}$, and that the two spaces agree if the connected components of  $\mmG_{\mmU}$ are strongly connected. 
For example if $x_1+x_2+x_3=T$ is a conservation law of the original reaction network and  $S_3$ is eliminated, then $x_1+x_2=\widetilde{T}$ is a conservation law of the reduced reaction network.

Let $W$ be the vector subspace of $S^\perp$ generated by the vectors $\omega^\mmH$ defined in 
 \eqref{eq:conscut} for each connected component $\mmG_\mmH$ of $\mmG_\mmU$ such that $*\notin \mmN_\mmH$. Any vector in $S^\perp$ with support in $\mmU$ belongs to $W$. That is, $W$ is the kernel of the map $\zeta\colon S^\perp \rightarrow \R^p$ (the restriction of  the projection $\zeta$ in \eqref{projection} to $S^\perp$).  The proof of the next theorem is given in Subsection~\ref{sec:proofsBasicProp}.
\begin{theorem}[Conservation laws]\label{relCL}
\begin{enumerate}[(i)] 
\item 
$\zeta(S^{\bot})  \subseteq \widetilde{S}^{\bot}$.
\label{CL1}
\item If every connected component of $\mmG_{\mmU}$ is strongly connected, then $\zeta(S^{\bot})= \widetilde{S}^{\bot}$.
\label{CL2}
\end{enumerate}
\end{theorem}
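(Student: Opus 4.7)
My plan is to verify orthogonality on generators of $\widetilde{S}$ for part (i), and to \emph{lift} any $\widetilde{\omega}\in\widetilde{S}^{\perp}$ to some $\omega\in S^{\perp}$ with $\zeta(\omega)=\widetilde{\omega}$ for part (ii), using $\mmG_{\mmU}$ to encode the missing $m$ coordinates $\rho(\omega)\in\R^{m}$ as the values of a node potential.

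For (i), let $\omega\in S^{\perp}$. A reduced reaction $\widetilde{r}\in\widetilde{\mmR}_{1}$ arises from $r\in\mmR\setminus\mmRu$, which satisfies $\rho(y_{r})=\rho(y'_{r})=0$, so $\zeta(\omega)\cdot(y'_{\widetilde{r}}-y_{\widetilde{r}})=\omega\cdot(y'_{r}-y_{r})=0$. For $\widetilde{r}_{\sigma}\in\widetilde{\mmR}_{2}$ with $\sigma\in\Delta$, the orthogonal decomposition $\omega\cdot w=\zeta(\omega)\cdot\zeta(w)+\rho(\omega)\cdot\rho(w)$ gives
\[
\zeta(\omega)\cdot\bigl(y'_{\widetilde{r}_{\sigma}}-y_{\widetilde{r}_{\sigma}}\bigr)=\sum_{e\in\sigma}\omega\cdot\bigl(y'_{r(e)}-y_{r(e)}\bigr)\;-\;\rho(\omega)\cdot\sum_{e\in\sigma}\rho\bigl(y'_{r(e)}-y_{r(e)}\bigr).
\]
The first sum vanishes termwise; the second vanishes because in the cycle $\sigma$ every $U_{j}\in\mmU$ that is a node of $\sigma$ appears as source of exactly one edge and as target of exactly one edge, while $*$-endpoints contribute $0$ to $\rho$.

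For (ii), given $\widetilde{\omega}\in\widetilde{S}^{\perp}$, define $c_{e}:=\widetilde{\omega}\cdot\zeta(y'_{r(e)}-y_{r(e)})$ on edges of $\mmG_{\mmU}$ and try $\omega:=(\widetilde{\omega},\omega_{\mmU})$. Setting $f\colon\mmN_{\mmU}\to\R$ by $f(U_{i}):=(\omega_{\mmU})_{i}$ and $f(*):=0$ whenever $*\in\mmN_{\mmU}$, a direct expansion of $\omega\cdot(y'_{r}-y_{r})=0$ shows that the requirement reduces, for $r\in\mmR\setminus\mmRu$, to $\widetilde{\omega}\cdot\zeta(y'_{r}-y_{r})=0$ (automatic since $\zeta(y'_{r}-y_{r})\in\widetilde{S}$), and for $r=r(e)$ with $e\in\mmE_{\mmU}$ to the potential equation $f(s(e))-f(t(e))=c_{e}$. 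Thus the problem reduces to existence of such an $f$ on each connected component $\mmG_{\mmH}$ of $\mmG_{\mmU}$, with the boundary $f(*)=0$ when $*\in\mmN_{\mmH}$.

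A node potential with prescribed directed edge-differences $c$ exists on a strongly connected (multi)digraph if and only if the sum $\sum_{e\in\sigma}c_{e}$ vanishes for every directed cycle $\sigma$; $f$ is then built by rooting at any node ($*$ when present, otherwise arbitrary) and propagating along directed paths, whose existence and compatibility strong connectivity guarantees. To verify the cycle condition for a cycle $\sigma\subseteq\mmG_{\mmH}$: if $\sum_{e\in\sigma}\zeta(y'_{r(e)}-y_{r(e)})=0$ then $\sum_{e\in\sigma}c_{e}=0$ trivially; otherwise Remark~\ref{rmk:compGammasigma} together with strong connectivity forces $\Gamma(\sigma)\neq\emptyset$, so $\sigma\in\Delta$ and $\sum_{e\in\sigma}\zeta(y'_{r(e)}-y_{r(e)})=y'_{\widetilde{r}_{\sigma}}-y_{\widetilde{r}_{\sigma}}\in\widetilde{S}$, whose pairing with $\widetilde{\omega}$ is zero. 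The main delicate point will be bookkeeping around the $*$-node, specifically that the forced value $f(*)=0$ is compatible with the cycle data (edges incident to $*$ contribute to cycles in exactly the same way as internal edges, and rooting the propagation at $*$ transports $0$ consistently across the component); self-loops and parallel edges in the multidigraph fall under the same cycle-condition framework and require no separate treatment.
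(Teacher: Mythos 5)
Your proposal is correct, and its overall strategy coincides with the paper's: part (i) is verified on the generators of $\widetilde{S}$ exactly as in the paper (the vanishing of $\rho$ of the cycle sum is the paper's Remark~\ref{simplifspecies}), and part (ii) lifts $\widetilde{\omega}$ by solving for the $\mmU$-coordinates, with the identical key observation that every directed cycle of $\mmG_\mmU$ pairs to zero with $\widetilde{\omega}$ --- either because its $\zeta$-image vanishes or because, by strong connectedness, it lies in $\Delta$ and hence contributes a reaction vector of $\widetilde{S}$. The only genuine difference is how the resulting linear system is solved: the paper writes it as $A_2^t\alpha=v$ and invokes Lemma~\ref{inccycles} ($\ker H=\im C_{\mmG}^t$, citing Berge for the fact that directed cycles span the cycle space of a strongly connected digraph), then fixes the $*$-coordinate by shifting along the all-ones kernel vector of $C_{\mmG}^t$; you instead phrase the system as a node-potential problem and build $f$ by propagation from a root, which handles the constraint $f(*)=0$ for free by rooting at $*$. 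The two are mathematically equivalent, and your packaging is arguably cleaner on the $*$-bookkeeping. The one place you are terse is the well-definedness of the propagated potential: two directed paths from the root to the same node need not form a directed cycle, so you must close each with a common return path (which exists by strong connectivity) and decompose the resulting closed walks into directed cycles to compare them --- this is precisely the content the paper delegates to Lemma~\ref{inccycles}, and it deserves an explicit sentence, but it is not a gap in the argument.
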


Theorem~\ref{relCL} gives that $\zeta$ induces an injective map from the quotient vector space $S^{\bot}/ W$ to $\widetilde{S}^{\bot}$, which is an isomorphism when all connected components of $\mmG_{\mmU}$ are strongly connected.
In this case, a minimal set of conservation laws of the reduced reaction network is simply obtained from a minimal set of conservation laws of the original reaction network by ``deleting''  $u_i$, $i=1,\dots,m$, from each conservation law, and any conservation law only in $u$.

\begin{example}
By deleting $u_1=x_4$ and $u_2=x_5$ from the  first equation in the minimal set of conservation laws \eqref{CLrunex} in Example \ref{runex:2}, we obtain the minimal set of conservation laws of the reduced reaction network in Example~\ref{ex:runningcycles}  that consists of
\[
x_1+x_2+x_3=\widetilde{T}.
\] 
\end{example}

\begin{example}
We consider a mechanism that consists of  two substrates $A,B$ that are converted into two products $P,Q$ through a series of reactions catalysed by an enzyme $E$  \cite[Chapter 5]{enz-kinetics}. It is an example of a \emph{bi-bi} model in the notation of Cleland \cite{FH07}:
\begin{align*}
E +A&  \ce{<=>[k_1][k_2]} \! EA  & EA+B &\ce{<=>[k_3][k_4]} \! EAB  \ce{<=>[k_5][k_6]} \! EPQ \ce{<=>[k_7][k_8]} \! EQ+P & EQ &\ce{<=>[k_9][k_{10}]} \! E+Q. 
\end{align*}
We let $U_1=E$, $U_2=EA$, $U_3=EAB$, $U_4=EPQ$ and $E_5=EQ$.
We assume mass-action kinetics.
A minimal set of conservation laws  is
\begin{align*}
T_E &=u_1+u_2+u_3+u_4+u_5 & T_{A+Q}&=x_{A}+x_{Q}+u_2+u_3+u_4+u_5\\
T_{B+Q}&=x_{B}+x_{Q}+u_3+u_4+u_5 &T_{B+P}&=x_{B}+x_{P}+u_4+u_5.
\end{align*}
We consider the reduced reaction network obtained by elimination of  the set 
$\mmU=\{U_1,U_2,U_3,U_4,U_5\}$.
The multidigraph $\mmG_{\mmU}$ is strongly connected
\begin{center}
\resizebox {0.7\linewidth}{!}{
\begin{tikzpicture}
\node (E) at (0,-2) {$U_1$};
\node (EA) at (2,-2) {$U_2$};
\node (EAB) at (4,-2){$U_3$}; 
\node (EPQ) at (6,-2){$U_4$}; 
\node (EQ) at (8,-2){$U_5$.}; 
\draw[->] (EA) to[out=10,in=170] node[above,sloped] {\footnotesize $k_3x_B$} (EAB);
\draw[->] (EAB) to[out=190,in=-10] node[below,sloped] {\footnotesize $k_4$} (EA);
\draw[->] (EAB) to[out=10,in=170] node[above,sloped] {\footnotesize $k_5$} (EPQ);
\draw[->] (EPQ) to[out=190,in=-10] node[below,sloped] {\footnotesize $k_6$} (EAB);
\draw[->] (EPQ) to[out=10,in=170] node[above,sloped] {\footnotesize $k_7$} (EQ);
\draw[->] (EQ) to[out=190,in=-10] node[below,sloped] {\footnotesize $k_8x_P$} (EPQ);
\draw[->] (E) to[out=10,in=170] node[above,sloped] {\footnotesize $k_1x_A$} (EA);
\draw[->] (EA) to[out=190,in=-10] node[below,sloped] {\footnotesize $k_2$} (E);
\draw[->] (EQ) to[out=230,in=310] node[below,sloped] {\footnotesize $k_9$} (E);
\draw[->] (E) to[out=320,in=220] node[above,sloped] {\footnotesize $k_{10}x_Q$} (EQ);
\end{tikzpicture}
}
\end{center}
By Theorem~\ref{relCL}, a minimal set of conservation laws for the reduced reaction network is
\begin{align*}
x_{A}+x_{Q}&=\widetilde{T}_{A+Q},&x_{B}+x_{Q}&=\widetilde{T}_{B+Q},&x_{B}+x_{P}&=\widetilde{T}_{B+P}.
\end{align*}

Let us find the reduced reaction network. 
$\Delta$ contains two cycles: the cycle with the edges with labels $k_1x_A$, $k_3x_B$, $k_5$, $k_7$ and $k_9$  that meets all nodes clockwise, and the cycle with the edges with the rest of the labels that meets all nodes anti-clockwise.
For both cycles, $\Gamma(\sigma)=\{\sigma\}$ (Remark \ref{rmk:compGammasigma}) and the reduced reaction network is:
\[
A+B \ce{<=>[\wk_1(x)][\wk_2(x)]} P+Q
\]
where $\wk_1(x)=q(x)k_1k_3k_5k_7k_{9}x_Ax_B$, $\wk_2(x)=q(x) k_2k_4k_6k_8k_{10}x_Px_Q$
and $q(x)$ equals $T_E$ divided by
\begin{align*}
&  (k_{4}k_{6}+k_{4}k_{7}+k_{5}k_{7})(k_{2}k_9+k_1k_9x_{A} +k_2k_{10}x_{Q}) 
+k_{4}k_{6}k_{8}x_{P} ( k_{{1}}x_{A}+k_{2}) \\ &  +k_{3}k_{5}k_{7}x_B ( k_{9}+k_{10}x_{Q}) +( k_{7}(k_{5}+k_{9}) +(k_{5}+k_{6})(k_9+k_8 x_P))k_1k_3x_Ax_B\\
&+( k_{4}(k_{2} +k_{{6}})+(k_{5}+k_{6})(k_2+k_3x_{B}))k_8k_{10}x_{P}x_{Q}.
\end{align*}
In this case $\widetilde{\Omega}=\R^4_{\geq 0}$. 
The reduced reaction networks for this example and for the \emph{ping-pong bi-bi} mechanism considered in the introduction  have the same reactions. The two networks differ only in the  factor $q(x)$.
\end{example}

If some connected components of $\mmG_\mmU$ are not strongly connected, then the cases  $\widetilde{S}^{\bot}=\zeta(S^{\bot})$ and  $\widetilde{S}^{\bot}\neq\zeta(S^{\bot})$ are both possible. 

\begin{example}
Consider the reaction network in Example \ref{cyclenoreact}. A minimal set of conservation laws consists of
\[
u_1+u_2+u_3=T_1, \quad  x_1+x_2=T_2, \quad  x_3+x_4=T_3,\quad  x_1-x_3-u_1+u_3=T_4.\] 
By Theorem~\ref{relCL},  the following are conservation laws of the reduced reaction network:
\[
x_1+x_2=\widetilde{T}_2,\qquad x_3+x_4=\widetilde{T}_3,\qquad  x_1-x_3=\widetilde{T}_4.
\]
However, the reduced reaction network has no reactions and therefore $\widetilde{S}^{\bot}=\R^4$. Here, the multidigraph $\mmG_{\mmU}$ is not strongly connected and $\zeta(S^{\bot})\subsetneq \widetilde{S}^{\bot}$.
\end{example}

\begin{example}\label{noCLcorr}
Consider the reaction network
\[
S_1+U_1\ce{->[\kappa_1(x,u)]}S_2+U_2\hspace{25pt} S_2\ce{->[\kappa_2(x,u)]}S_1
\]
and the set $\mmU=\{U_1,U_2\}$. A minimal set of conservation laws of the reaction network consists of
$u_1+u_2=T_1$ and $x_1+x_2=T_2$.
The multidigraph $\mmG_{\mmU}$ is
\begin{center}
\begin{tikzpicture}[inner sep=1pt]
\node (X1) at (0,-2) {$U_1$};
\node (X3) at (2,-2) {$U_2$};
\draw[->] (X1) to[out=0,in=180] node[above,sloped] {\footnotesize $v_1(x)$} (X3);
\end{tikzpicture}
\end{center}
which is acyclic. The reduced reaction network has one reaction $S_2\ce{->}S_1$, and a minimal set of conservation laws consists of $x_1+x_2=T_2$. The multidigraph $\mmG_{\mmU}$ is not strongly connected and $\widetilde{S}^{\bot}= \zeta(S^{\bot})$.
\end{example}

\section{Iterative elimination}\label{severalsteps}

In this section we discuss  stepwise elimination and show that in certain cases one might obtain different reduced reaction networks, depending on the order by which species are eliminated. This naturally has practical implications. Biologically, it relates to the situation in which species at different time-scales are eliminated from the network. The fastest species are removed first, then the second fastest and so on. As the resulting network might differ from the network obtained by removing the species all at once, a proper partitioning of species into time-scales might be warranted. 
  
  The setting is the following: Let 
 $$\emptyset=\mmU_0\subset\mmU_1\subset \mmU_2\subset \dots \subset \mmU_l=\mmU$$ 
 be an increasing sequence of linearly eliminable sets. We compare the situation where $\mmU$ is eliminated at once (direct elimination) to the situation where the sets $\mmU_i\setminus \mmU_{i-1}$,  $i=1,\dots,l$, are  eliminated iteratively (iterative elimination).

If each set $\mmU_i\setminus \mmU_{i-1}$, $i=1,\dots, l$, is the node set of  a connected component of $\mmG_{\mmU}$ (or a union of such sets), then direct and iterative elimination yield the same result by construction: both elimination of variables and the reduced reaction network consider the multidigraph $\mmG_{\mmU}$ componentwise.

Another scenario in which iterative and direct elimination coincide is as follows.
Consider the connected  component $\mmG_\mmH$ of $\mmG_\mmU$ that contains $*$ (if any) and 
let $\mmG_{\mmH}^*$ be the multidigraph obtained by removal of the node $*$  and all the edges that have $*$ as  source or target. This multidigraph might now have several connected components. 
The next proposition (proven in Subsection~\ref{sec:proofssteps}) states that elimination of the species in $\mmH$
or iterative elimination of the species in the connected components of $\mmG_{\mmH}^*$ yield the same reduced reaction network.
\begin{proposition}[Splitting the connected component with node $*$]\label{nonconect}
Assume that $\mmG_\mmU$ is connected and contains $*$.
Let $\mmU=\mmH_1\sqcup \mmH_2$ be a decomposition of $\mmU$  into two disjoint subsets such that  $\mmG_\mmU^*=\mmG_{\mmH_1}^* \sqcup \mmG_{\mmH_2}^*$. Then the reduced reaction network obtained by elimination of $\mmU$ agrees with the reduced reaction network obtained by  elimination  of $\mmH_1$ followed by elimination of $\mmH_2$.
\end{proposition}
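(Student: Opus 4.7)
The plan is to exploit the assumption that $*$ is a cut vertex of $\mmG_\mmU$. Because every edge of $\mmG_\mmU$ has both endpoints in $\mmH_1\cup\{*\}$ or both in $\mmH_2\cup\{*\}$, the reactions split as $\mmR_\mmU = \mmR_{\mmH_1}\sqcup \mmR_{\mmH_2}$, and each multidigraph $\mmG_{\mmH_i}$ (with its labelling) is exactly the induced subgraph of $\mmG_\mmU$ on $\mmH_i\cup\{*\}$. In particular, for $r\in \mmR_{\mmH_i}$ the label $v_r(x)$ does not depend on the concentrations of species in $\mmH_{3-i}$, since $r$ does not involve them. I would first verify that $\mmH_1$ is linearly eliminable (and later that $\mmH_2$ is linearly eliminable in the intermediate network obtained after Step~1): noninteractivity and linearity are inherited from $\mmU$, and restricting a spanning tree of $\mmG_\mmU$ rooted at $*$ to $\mmH_1\cup\{*\}$ yields a spanning tree of $\mmG_{\mmH_1}$ rooted at $*$, because every node of $\mmH_1$ keeps its unique outgoing edge, which must land in $\mmH_1\cup\{*\}$.

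The key combinatorial step is the following decomposition. Any spanning tree of $\mmG_\mmU$ rooted at a node $N\in\mmH_1\cup\{*\}$ decomposes uniquely as a pair $(\tau_1,\tau_2)$, where $\tau_1$ is a spanning tree of $\mmG_{\mmH_1}$ rooted at $N$ and $\tau_2$ is a spanning tree of $\mmG_{\mmH_2}$ rooted at $*$; conversely any such pair glues to a spanning tree of $\mmG_\mmU$ rooted at $N$. The decisive observation is that, when $N\in\mmH_1$, the outgoing edge of $*$ in the tree must land in $\mmH_1$, for otherwise nodes in $\mmH_2$ could never reach the root. Since $\pi$ multiplies over edges, this bijection gives
\[
q_\mmU(x) = q_{\mmH_1}(x)\,q_{\mmH_2}(x), \qquad \varphi(x)_i = \varphi_{\mmH_i}(x)_i \text{ for } U_i\in \mmH_i.
\]
Because a cycle of $\mmG_\mmU$ visits $*$ at most once, it lies entirely in $\mmG_{\mmH_1}$ or entirely in $\mmG_{\mmH_2}$; and for any cycle $\sigma$ in $\mmG_{\mmH_1}$, the analogous decomposition $\Gamma_\mmU(\sigma)\leftrightarrow \Gamma_{\mmH_1}(\sigma)\times\Theta_{\mmH_2}(*)$ yields the identity $q_\mmU(x)\,\Pi_\mmU(\sigma) = q_{\mmH_1}(x)\,\Pi_{\mmH_1}(\sigma)$, which is exactly the rate function assigned to $\widetilde{r}_\sigma$ by Step~1. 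The condition $\Gamma\neq\emptyset$ and the stoichiometric condition defining $\Delta$ both transfer, because $\Theta_{\mmH_2}(*)\neq\emptyset$ by linear eliminability and because reactions in $\mmR_{\mmH_1}$ leave the $\mmH_2$-coordinates of $y'_{r(e)}-y_{r(e)}$ equal to zero.

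It remains to check that the intermediate network after Step~1 presents the graph $\mmG_{\mmH_2}$ and its labels unchanged. Indeed, new reactions from Step~1 come from cycles in $\mmG_{\mmH_1}$ and involve only species in $\mmU^c$, so they do not enter $\mmR_{\mmH_2}$ of the intermediate network; and the surviving reactions in $\mmR_{\mmH_2}$ have rate functions $u_j v_r(x)$ independent of $u_{\mmH_1}$, hence they are unchanged by the substitution $u_{\mmH_1}=\varphi_{\mmH_1}(x)$. Applying the decomposition above with the roles of $\mmH_1$ and $\mmH_2$ swapped then shows that Step~2 reproduces exactly the cycles and rate functions of the direct reduction. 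The reactions in $\mmR\setminus\mmR_\mmU$ also match, because $\varphi(x)$ agrees coordinatewise with the pair $(\varphi_{\mmH_1}(x), \varphi_{\mmH_2}(x))$ and this is what gets substituted into $\kappa_r(x,u)$ in both procedures.

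The main obstacle will be setting up the spanning-tree bijection cleanly, in particular pinning down the forced direction of the edge out of $*$ in terms of the location of the root and handling the edge case where $*$ itself is the root. Once this is established, the identities for $q$, $\varphi$ and $\Pi$ follow from multiplicativity of $\pi$, and the rest of the comparison between the two reduced networks is bookkeeping.
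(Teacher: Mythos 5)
Your proposal is correct and follows essentially the same route as the paper: the decisive step in both is the factorization of spanning trees of $\mmG_\mmU$ rooted at $N\in\mmH_1\cup\{*\}$ into pairs $(\tau_1,\tau_2)$ with $\tau_2\in\Theta_{\mmH_2}(*)$, which is exactly the content of the paper's Lemma on the cut node $*$ (stated there with an arbitrary edge set $W\subseteq\mmG_{\mmH_1}$, covering both your $\varphi$ identity, $W=\emptyset$, and your $\Pi(\sigma)$ identity, $W=\sigma\setminus e$), combined with the observation that every cycle lies entirely in one of the two pieces. Your explicit check that the intermediate network after Step~1 presents $\mmG_{\mmH_2}$ with unchanged labels is a detail the paper leaves implicit, and is worth spelling out as you do.
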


See Example~\ref{exinterm} in Subsection~\ref{intermediates} for an illustration.

\medskip

In general, iterative elimination might not yield the same reaction network as direct elimination.  We present  some examples  below.

\begin{example}
Consider the reaction network with mass-action kinetics
\[
S_1\cee{->[k_1]}U_1+S_2\cee{->[k_2]} U_2 \cee{->[k_3]} U_1+S_3\cee{->[k_4]}S_4,
\]
and the linearly eliminable sets
$\mmU_1=\{U_1\}\subset \mmU_2=\{U_1,U_2\}$ with multidigraphs
\begin{center}
\begin{tikzpicture}[inner sep=1pt]
\node (Y1) at (-0.1,0) {$U_1$};
\node (*1) at (-0.1,-1.5) {$*$};
\node (Y2) at (3.7,0) {$U_1$};
\node (Z) at (5,0) {$U_2$};
\node (*2) at (3.7,-1.5) {$*$};
\node (G1) at (-2,0) {$G_{\mathcal{U}_1}:$};
\node (G2) at (2,0) {$G_{\mathcal{U}_2}:$};
\draw[->] (*1)  to[out=180,in=180] node[left] {\footnotesize $\scriptstyle k_1x_1$} (Y1);
\draw[->] (Y1) to[out=260,in=100] node[below,sloped] {\footnotesize $\scriptstyle k_2x_2$} (*1);
\draw[->] (*1) to[out=80,in=280] node[below,sloped] {\footnotesize $\scriptstyle k_3u_2$} (Y1);
\draw[->] (Y1) to[out=360,in=0] node[right] {\footnotesize $\scriptstyle k_4x_3$} (*1);
\draw[->] (Y2) to[out=10,in=170] node[above,sloped] {\footnotesize $\scriptstyle k_2x_2$} (Z);
\draw[->] (Z) to[out=190,in=-10] node[below,sloped] {\footnotesize $\scriptstyle k_3$} (Y2);
\draw[->] (Y2) to[out=280,in=80] node[right] {\footnotesize $\scriptstyle k_4x_3$} (*2);
\draw[->] (*2) to[out=110,in=260] node[left] {\footnotesize $\scriptstyle k_1x_1$} (Y2);
\end{tikzpicture}
\end{center}
The four cycles of $\mmG_{\mmU_1}$ give that the reduced reaction network obtained by elimination of $\mmU_1$ is
\begin{align}
S_1+S_2&\cee{->[\wk_1(x,u_2)]}S_2+ U_2 \cee{->[\wk_2(x,u_2)]}S_3+U_2 \cee{->[\wk_3(x,u_2)]} S_3+ S_4 \label{eq:iterative1}\\
S_1+S_3&\cee{->[\wk_4(x,u_2)]}S_2+S_4 \nonumber
\end{align}
with kinetics
\begin{align*}
\wk(x,u_2)& =\left(\dfrac{k_1k_2x_1x_2}{k_2x_2+k_4x_3},\ \dfrac{k_2k_3x_2u_2}{k_2x_2+k_4x_3},\ \dfrac{k_3k_4x_3u_2}{k_2x_2+k_4x_3},\ \dfrac{k_1k_4x_1x_3}{k_2x_2+k_4x_3}\right). 
\end{align*}
Here $\widetilde{\Omega}=\R^5_{\geq 0} \setminus \{x_2=x_3=0\}$. 
The set $\mmU_2\setminus \mmU_1=\{U_2\}$ is linearly eliminable for the reaction network \eqref{eq:iterative1}.
Its elimination yields the reduced reaction network:
\begin{align*}
S_1+S_2+S_3&\cee{->[\wk'_1(x)]} S_2+S_3+S_4&S_2&\cee{->[\wk'_2(x)]}S_3 & 
S_1+S_3&\cee{->[\wk'_3(x)]}S_2+S_4
\end{align*}
with rate functions 
\begin{align*}
\wk'_1(x)&=\dfrac{k_1k_2k_3k_4x_1x_2x_3/(k_2x_2+k_4x_3)^2}{k_3k_4x_3/(k_2x_2+k_4x_3)}=\dfrac{k_1k_2x_1x_2}{k_2x_2+k_4x_3}, \\
\wk'_2(x)&=\dfrac{k_2k_3x_2/(k_2x_2+k_4x_3)}{k_3k_4x_3/(k_2x_2+k_4x_3)}=\dfrac{k_2x_2}{k_4x_3}\\
\wk'_3(x)&=\dfrac{k_1k_4x_1x_3}{k_2x_2+k_4x_3}=\wk_4(x,\widetilde{\varphi}_{u_2}(x)).
\end{align*}
In this case $\widetilde{\Omega}'=\{x\in \R^4_{\geq 0}  \st x_3\neq 0\}$. Since $k_2x_2+k_4x_3\neq 0$ for all $x\in \widetilde{\Omega}'$ we might simplify $k_2x_2+k_4x_3$ from  $\wk'_1(x)$ and $\wk'_2(x)$, as shown above.

The reduced reaction network obtained by direct elimination of $\mmU_2$ is:
\[
S_2\cee{->[\overline{\k}_1(x)]}S_3\qquad S_1+S_3\cee{->[\overline{\k}_2(x)]}S_2+S_4
\]
with rate functions 
\[
\overline{\k}_1(x)=\dfrac{k_2k_3x_2}{k_3k_4x_3}=\dfrac{k_2x_2}{k_4x_3},\qquad \overline{\k}_2(x)=\dfrac{k_1k_4x_1x_3}{k_3k_4x_3}=\dfrac{k_1x_1}{k_3},
\]
which is clearly different from the reduced reaction network obtained by first eliminating $\mmU_1$ and then $\mmU_2\setminus \mmU_1$.
\end{example}

When $\mmG_{\mmU}$ is connected and contains $*$, elimination of $u$ only involves the equation $\dot{u}=0$ and no conservation laws.  Therefore,  iterative elimination  is equivalent to solving a system of linear equations iteratively. It follows that the function $\varphi$ in \eqref{eq:varphi} does not depend on the chosen procedure and thus neither does the ODE system \eqref{eq:newODE}. However, as shown in the above example, the reduced reaction network may depend on the procedure, even though their associated ODE systems agree.

The next example provides an example where direct and iterative elimination result in the same reduced reaction network, and an example where the reactions but not the kinetics of the two reduced reaction networks agree.

\begin{example}\label{iterative}
Consider the  reaction network with mass-action kinetics:
\[
S_1+U_1\ce{->[k_1]}U_2 \ce{->[k_2]} U_3 \ce{->[k_3]} S_2+U_1,
\]
and the sets of noninteracting species:
\[
\mmU_1=\{U_1\}\subset \mmU_2=\{U_1,U_2\}\subset \mmU_3=\{U_1,U_2,U_3\}.
\]
The multidigraphs $\mmG_{\mmU_i}$ are  
\begin{center}
\begin{tikzpicture}[inner sep=1pt]
\node (E1) at (0,0) {$U_1$};
\node (*1) at (0,-1.5) {$*$};
\node (E2) at (2.7,0) {$U_1$};
\node (Y2) at (4.7,0) {$U_2$};
\node (*2) at (3.7,-1.5) {$*$};
\node (E3) at (7.4,0) {$U_1$};
\node (Y3) at (9.4,0) {$U_2$};
\node (Z3) at (8.4,-1.5) {$U_3$};
\node (G1) at (-1,0) {$G_{\mathcal{U}_1}:$};
\node (G2) at (1.5,0) {$G_{\mathcal{U}_2}:$};
\node (G3) at (6.2,0) {$G_{\mathcal{U}_3}:$};
\draw[->] (E1) to[out=260,in=100] node[left] {\footnotesize $\scriptstyle k_1x_1$} (*1);
\draw[->] (*1) to[out=80,in=280] node[right] {\footnotesize $\scriptstyle k_3u_3$} (E1);
\draw[->] (E2) to[out=0,in=180] node[above,sloped] {\footnotesize $\scriptstyle k_1x_1$} (Y2);
\draw[->] (Y2) to[out=270,in=70]  node[right] {\footnotesize $\scriptstyle k_2$} (*2);
\draw[->] (*2)  to[out=110,in=270] node[left] {\footnotesize $\scriptstyle k_3u_3$} (E2);
\draw[->] (E3) to[out=0,in=180] node[above,sloped] {\footnotesize $\scriptstyle k_1x_1$} (Y3);
\draw[->] (Y3) to[out=270,in=70]  node[right] {\footnotesize $\scriptstyle k_2$} (Z3);
\draw[->] (Z3)  to[out=110,in=270] node[left] {\footnotesize $\scriptstyle k_3$} (E3);
\end{tikzpicture}
\end{center}
The reduced reaction network obtained by elimination of  $\mmU_1$ is
\begin{equation}\label{eq:iterative2}
\begin{array}{ccc}
S_1+U_3 \ce{->[\wk_1(x,u_2,u_3)]} S_2 + U_2 & \hspace{15pt}  U_2  \ce{->[\wk_2(x,u_2,u_3)]} U_3
\end{array}
\end{equation}
with rate functions 
\begin{align*}
\wk_1(x,u_2, u_3)& =\dfrac{k_1k_3x_1u_3}{k_1x_1}=k_3u_3, &  \wk_2(x,u_2, u_3) & = k_2u_2.
\end{align*}
Since $\widetilde{\Omega}=  \{x\in \R^4_{\geq 0} \st x_1 \neq 0\}$, $x_1$ can be  canceled in  $\wk_1(x,u_2, u_3)$.

The set $\mmU_2\setminus \mmU_1=\{U_2\}$ is linearly eliminable for the reaction network \eqref{eq:iterative2}.
The multidigraph $\mmG_{\mmU_2\setminus \mmU_1}$ is
\begin{center}
\begin{tikzpicture}[inner sep=1pt]
\node (E1) at (0,0) {$U_2$};
\node (*1) at (2,0) {$*$};
\draw[->] (E1) to[out=10,in=170] node[above] {\footnotesize $\scriptstyle k_2$} (*1);
\draw[->] (*1) to[out=190,in=350] node[below,sloped] {\footnotesize $\scriptstyle k_3u_3$} (E1);
\end{tikzpicture}
\end{center}
Elimination of $\mmU_2\setminus \mmU_1$ yields the reduced reaction network
\begin{equation}\label{eq:red2}
S_1+U_3 \ce{->[\wk'_1(x,u_3)]} S_2+U_3,\qquad \wk'_1(x,u_3)=\dfrac{k_2k_3u_3}{k_2}=k_3u_3.
\end{equation}
This is also the reduced reaction network obtained by elimination of   $\mmU_2$ directly, as it can easily be seen by considering  $\mmG_{\mmU_2}$. Therefore, in this case both eliminations yield the same result.

We compare now direct elimination of $\mmU_3$ with elimination of $\mmU_3\setminus \mmU_2 = \{U_3\}$ 
 from \eqref{eq:red2}.
Both approaches provide the reduced reaction network
$$
S_1\ce{->[\wk''_1(x)]} S_2.
$$
However, the kinetics differ:
$\wk''_1(x) = k_3 T$, for iterative elimination, using the conservation law $u_3=T$, and 
$$\wk''_1(x)=  \frac{k_1k_2k_3 T x_1 }{k_1k_2x_1 + k_2x_3 + k_1k_3x_1}, \qquad \text{using }\quad u_1+u_2+u_3=T, $$
for direct elimination. 
Therefore, the kinetics of the reduced reaction network depends on whether the elimination is performed iteratively or not.
\end{example}

In general, when conservation laws are involved in the elimination procedure, then the kinetics obtained after direct and iterative elimination differ, even though the reactions of the two reduced reaction networks might be the same. The reason is that 
 the reduced ODE systems  $\dot{x} = \zeta(x,\varphi(x))$ in \eqref{eq:newODE} obtained by iterative or direct elimination are different. 
 
 To understand why, let $\mmU=\{U_1,\dots,U_{m}\}$ be a linearly eliminable set such that $\mmG_{\mmU}$ is strongly connected and does not contain the node $*$.  
 Then $u_1+\dots+u_m$ is conserved.
The set $\mmU_1=\{U_1,\dots,U_{m-1}\}$ is linearly eliminable and $\mmG_{\mmU_1}$ contains the node $*$.  Theorem \ref{elim} thus
 guarantees  that the system of equations $\dot{u}_1=\dots=\dot{u}_{m_1}=0$ in $u_1,\dots,u_{m-1}$ has a unique solution  $\varphi^1(x, u_m)$.
The reduced reaction network obtained by elimination of $\mmU_1$ has, by Theorem \ref{relCL}, the conservation law 
\begin{equation*}\label{eq:nosubsCL}
u_m=\widetilde{T}.
\end{equation*}
This implies that in the ODE system  $\dot{x} = \zeta(x,\varphi(x))$ obtained by iterative elimination of the set $\mmU_1$ followed by $\{U_m\}$, $\varphi_m(x)$ is constant equal to $T$, and the first $m-1$ components of $\varphi$ agree with $\varphi^1(x,\widetilde{T})$.

If $\mmU$ is directly eliminated, then $\varphi(x)$ is found by solving the system of equations
\begin{align*}
\dot{u}_1=\dots=\dot{u}_{m-1}&=0, & 
\sum\limits_{i=1}^mu_i&=T.
\end{align*}
The first $m-1$ equations give $u_i=\varphi^1_i(x,u_m)$ for $i=1,\dots,m-1$, and the last equation determines $u_m$ as the solution to
\begin{equation*}\label{eq:subsCL}
\sum\limits_{i=1}^{m-1}\varphi^1_i(x,u_m) +u_m=T.
\end{equation*}
Unless $\sum\limits_{i=1}^{m-1}\varphi^1_i(x,u_m)$ is constant, $u_m=\varphi_m(x)$  is not constant. 
 Hence the function $\varphi(x)$ used in $\dot{x} = \zeta(x,\varphi(x))$ depends in general on the chosen approach, and as a consequence so does the kinetics of the reduced reaction network.

\section{Relation to previous work}
\label{sec:temkin2}

 Ideas similar ideas to ours  have been proposed in the literature. 
In the first part of this section we focus on the techniques proposed in \cite{Horiuti,Temkin1}; see also  \cite{Temkin-Bonchev,Temkin_book}. We will discuss similarities and differences to our approach following the exposition in \cite{Temkin-Bonchev}.
 In the second part of the section, we justify that our construction generalises the reduction of intermediates in  \cite{feliu:intermediates}.

\subsection{Horiuti-Temkin approach}\label{sec:temkin}
In \cite{Temkin-Bonchev} the authors outline some applications of graph theory to the theory of reaction networks and introduce reduction of reaction networks by graphical means.  In that work,   the reversible reactions are treated as one reaction, while we treat them as two separate reactions in the present paper. 

The authors introduce a graph called the \emph{kinetic graph} (see \cite{Temkin1}) whose nodes are the so called \emph{intermediates}. The kinetic graph coincides with the multidigraph $\mmG_{\mmU}$ introduced in Definition \ref{defgraph} (up to the treatment of reversible reactions) and the so called intermediates form a noninteracting set in our terminology. 
Their goal is to eliminate intermediates from a reaction network  and find a \emph{minimal} mechanism that allows the computation of the production rates of the remaining species. The differences between their approach and our work arise from the details in the treatment of the reactions and the rates. 

First, in \cite{Temkin-Bonchev} not only one reduced reaction network is obtained, as in our case, but an infinite number of them. Any linear combination of the original reactions that cancel the intermediates is a possible reaction in a reduced reaction network. The set of such reactions defines a vector subspace of $S$, of which a basis is chosen. Therefore, the reduced reactions are independent  vectors in the stoichiometric subspace, and their number is minimal.

Second, the conditions imposed for the computation of the rate functions are different. In \cite{Temkin-Bonchev},   the rate functions 
are found by imposing the Horiuti-Temkin equation (see \eqref{eq:HouTem}), which involves the rate functions of the original and reduced reaction networks but does not relate to the stoichiometry of the reactions. As a consequence of the Houriti-Temkin equation, the ODE system \eqref{eq:newODE} is also satisfied for the reactions and rate functions given in \cite{Temkin-Bonchev}. We discuss next in further detail the differences on this particular point. Assume for the discussion below that $\mmR=\mmR_{\mmU}$.

For $\sigma\in \Delta$,  let $\nu_\sigma$  be the vector with $(\nu_\sigma)_i=1$ if $e_i$ is an edge of the cycle and $(\nu_\sigma)_i=0$ otherwise.
Choose an order for the set of edges in $\mmG_\mmU$ and for the set of cycles $\Delta=\big\{\sigma_1,\dots,\sigma_{|\Delta|}\big\}$, and let $\widetilde{H}$ be the $|\Delta|\times \ell$ matrix whose $i$th row is $\nu_{\sigma_i}$. With this notation, the Horiuti-Temkin equation for the reduced reaction network in Definition \ref{defrednet}, considering reversible reactions as two irreversible reactions, reads
\begin{equation}\label{eq:HouTem}
\kappa(x,\varphi(x)) = \widetilde{H}^t \wk(x).
\end{equation}
Componentwise, this condition is in our notation
\[
\kappa_i(x,\varphi(x))=\sum\limits_{(\nu_{\sigma_j})_i\neq 0}\wk_j(x)= q(x)\hspace{-10pt}\sum\limits_{\sigma\in \Delta, e(r_i)\in\sigma}\hspace{-15pt}\Pi(\sigma)\qquad i=1,\dots,\ell.
\] 
This condition might not be satisfied by the rate functions in Definition \ref{defrednet}.
We show this in the case where $\mmU$ is linearly eliminable and $\mmG_\mmU$ is connected and does not contain $*$.  We let
$q(x)=q_\mmU(x)$.

Let $i\in\{1,\dots,\ell\}$ and $U_j\in\mmU$ be involved in the reactant of $r_i\in \mmR$ (it exists because $\mmG_\mmU$ is connected and does not contain $*$). Then 
\begin{align*}
\kappa_i(x,\varphi(x))&=\varphi_j(x)v_i(x)=q(x)\sum\limits_{\tau\in\Theta(U_j)}\pi(\tau)v_i(x)=(\star).
\end{align*} 
Each term $\pi(\tau)v_i(x)$ in the sum is the label of an element in $\Gamma(\sigma)$ for some cycle $\sigma$ that contains the edge $e(r_i)$. Let $\extDelta$ be the set of all cycles of $\mmG_\mmU$. Using the definition of $\Gamma(\sigma)$  we have
\begin{align*}
(\star)&=q(x)\hspace{-5pt}\sum\limits_{\sigma\in\extDelta, \Gamma(\sigma)\neq\emptyset, e(r_i)\in\sigma}\hspace{-20pt}\Pi(\sigma)\quad \geq \quad q(x)\hspace{-10pt}\sum\limits_{\sigma\in \Delta, e(r_i)\in\sigma}\hspace{-15pt}\Pi(\sigma).
\end{align*}
The inequality arises because $\sum\limits_{e\in\sigma}\zeta(y'_{r(e)}-y_{r(e)})\neq 0$ for $\sigma\in\Delta$ and thus  the sum on the left-hand side might involve more terms than the sum on the right-hand side. Hence, the Horiuti-Temkin equation is not necessarily satisfied.

Our rate functions fulfil an equation similar to the  Horiuti-Temkin equation, once  stoichiometry is introduced. To understand this, write the ODE system \eqref{eq:ODE} 
as
\begin{equation}\label{eq:matrODE}
 g(x)=A\kappa(x),\quad x\in \Omega,
 \end{equation}
 where $A=(a_{ij})\in \R^{n\times \ell}$ is the stoichiometric matrix with $a_{ij}=(y'_{r_j}-y_{r_j})_i$.  
The stoichiometric matrix of the reduced reaction network is by Definition \ref{defrednet} $A^c\widetilde{H}^t$, where 
 $A^c$ is the matrix given by the first $p$ rows of  $A$. 
By equation \eqref{eq:matrODE} and Theorem \ref{rednet}, the kinetics   in Definition \ref{defrednet} satisfies
\[
\zeta\big(A\kappa(x,\varphi(x))\big)=A^c\widetilde{H}^t\wk(x).
\]

Finally, the kinetics obtained in \cite{Temkin-Bonchev} is not necessarily standard if the kinetics of the original reaction network is, contrary to our kinetics (cf. Section \ref{kinetics}). 
This is illustrated using the main example in \cite{Temkin-Bonchev}. 

\begin{example}
Consider the  reaction network with mass-action kinetics
\begin{align*}
S_1+S_2+S_6 \ce{<=>[k_1][k_2]} S_3+S_7&& S_7\ce{<=>[k_3][k_4]} S_4+S_6& 
&S_4+S_7\ce{<=>[k_5][k_6]} S_1+S_5+S_6
\end{align*}
The correspondence with \cite{Temkin-Bonchev} is as follows: 
$S_1={\rm C}$, $S_2={\rm H_2O}$, $S_3={\rm H_2}$, $S_4={\rm CO}$, $S_5={\rm CO_2}$, $S_6={\rm Z_1}$ and $S_7={\rm COZ_1}$.
We consider the linearly eliminable set   $\mmU=\{S_6,S_7\}$ with  the conservation law $x_6+x_7=T$.  The reduced reaction network obtained by our procedure is
$$S_1+S_2 \ce{<=>[\wk_1(x)][\wk_2(x)]} S_3+S_4\quad
 S_1+S_2+S_4 \ce{<=>[\wk_3(x)][\wk_4(x)]} S_1+S_3+S_5 \quad
2S_4 \ce{<=>[\wk_5(x)][\wk_6(x)]} S_1+S_5$$
with kinetics
\begin{align*}
\wk(x)=&q(x) \Big( k_1 k_3 x_1 x_2,   k_2 k_4 x_3 x_4,  k_1 k_5 x_1 x_2 x_4,   k_2 k_6 x_1 x_3 x_5, k_4 k_5 x_4^2,  k_3 k_6 x_1 x_5\Big),
\end{align*}
where
$
q(x)=T \big( k_1 x_1 x_2+k_6 x_1 x_5+k_2 x_3+(k_4 +k_5) x_4+k_3\big)^{-1}.
$
The kinetics is standard for the reduced reaction network, because each rate function vanishes when one of the concentrations of the species in the reactant is zero.

One of the reduced reaction networks obtained in \cite{Temkin-Bonchev} is
\begin{align}\label{TemkinNetwork}
&S_1+S_2 \ce{<=>[\wk'_1(x)][\wk'_2(x)]} S_3+S_4 &S_2+S_4 \ce{<=>[\wk'_3(x)][\wk'_4(x)]} S_3+S_5.
\end{align}
In \cite{Temkin-Bonchev} the rates of the reactions are considered by pairs of reversible reactions and we obtain
by their algorithm that
\begin{align*}
\wk'_1(x)-\wk'_2(x) =& q(x)( k_1 k_3 x_1 x_2- k_2 k_4 x_3 x_4+ k_3 k_6 x_1 x_5- k_4 k_5 x_4^2)\\ 
\wk'_3(x)-\wk'_4(x) =& q(x)( k_1 k_5 x_1 x_2 x_4 - k_2 k_6 x_1 x_3 x_5- k_3 k_6 x_1 x_5+ k_4 k_5 x_4^2)
\end{align*}
with $q(x)$ as above.
By collecting the terms according to their signs, we find
\begin{align*}
\wk'_1(x) =& q(x)(k_1 k_3 x_1 x_2+ k_3 k_6 x_1 x_5),& \wk'_2(x)=& q(x)( k_2 k_4 x_3 x_4+ k_4 k_5 x_4^2).
\end{align*} 
This kinetics is not standard for the reaction network \eqref{TemkinNetwork}.
\end{example}

\subsection{Intermediates}\label{intermediates}
Our construction generalises the reduction of intermediates in  \cite{feliu:intermediates}.
In \cite{feliu:intermediates}, an \emph{intermediate} is defined as a species $Y$  in a reaction network that is 
produced in at least one reaction, consumed in at least one reaction and is not involved in any  complex other than $Y$. A \emph{set of intermediates} $\mathcal{Y}$ is a subset of the species set and at the same time a subset of the set of complexes (under the identification of complexes with linear combinations of species).

Any set of intermediates is noninteracting. The multidigraph $\mmG_{\mathcal{Y}}$ contains the node $*$ and is strongly connected.  Thus, assuming mass-action kinetics, $\mathcal{Y}$ is a linearly eliminable set.
The reduced reaction network obtained by elimination of $\mathcal{Y}$ has the following properties.

Any cycle $\sigma\in  \Delta$ must contain the node $*$, because at least one reaction $r(e)$ with $e$ in $\sigma$ must involve a species in $\mathcal{Y}^c$.
Then the reactant (resp. product) of $\widetilde{r}_{\sigma}$ is  the reactant  $y_{r(e)}$ (resp. product  $y'_{r(e)}$) of the reaction corresponding to the edge $e$ with source $s(e)=*$  (resp. target  $t(e)=*$).

The label of an edge in $\mmG_{\mathcal{Y}}$ whose source is not $*$ is constant.
Thus
if $\tau$ is a spanning tree rooted at $*$, then $\pi(\tau)$ is a product of reaction rate constants $k_i$.
If $\tau$ is a spanning tree rooted at  $Y\in \mathcal{Y}$, then $\pi(\tau)$ is a product of reaction rate constants 
and the label $x^{y_{r(e)}}$ of the edge $e$ with $s(e)=*$ in the tree. 
 Thus
$$q(x)= G(k)^{-1}, \qquad\textrm{and}\qquad \Pi(\sigma)= x^{y_{\widetilde{r}_{\sigma}}} F(k),$$
where $G(k),F(k)$  are polynomials in the reaction rate constants, such that $q(x)$ is constant in $x$. 
Therefore, the reduced reaction network obtained by elimination of $\mathcal{Y}$ is a reaction network with mass-action kinetics. There is a reaction between two  complexes in $\mmC\setminus\mathcal{Y}$ in the reduced reaction network if and only if the reaction is already in the original network or there is a directed path between the two complexes through intermediates. 

\begin{example}\label{exinterm} Consider the reaction network with mass-action kinetics:
\begin{align*}
S_1+S_2  \ce{<=>[k_1][k_2]}  Y_1  &  \ce{->[k_3]}  Y_2  \ce{<=>[k_4][k_5]}  S_1+S_3  & Y_3  \ce{->[k_9]}  Y_5&   \ce{->[k_{10}]}  S_4+S_6  \\
S_2+S_4  \ce{->[k_6]}  Y_3   & \ce{->[k_7]}   Y_4   \ce{->[k_8]}   S_4+S_5  & S_3+S_7 & \ce{->[k_{11}]}  S_2+S_7.
\end{align*}
A set of intermediates of this network is $\mathcal{Y}=\{Y_1,\dots, Y_5\}$. 
The corresponding multidigraph $\mmG_{\mathcal{Y}}$ is
\begin{center}
\begin{tikzpicture}[inner sep=1pt]
\node (Y1) at (-3,-0.5) {$Y_1$};
\node (Y2) at (-3,0.5) {$Y_2$};
\node (Y3) at (3,1) {$Y_3$};
\node (Y4) at (2,0) {$Y_4$};
\node (Y5) at (3,-1) {$Y_5$};
\node (*) at (0,0) {$*$};
\draw[->] (*) to[out=185,in=15] node[below,sloped] {\footnotesize $\scriptstyle k_1x_1x_2$} (Y1);
\draw[->] (Y1) to[out=-10,in=210] node[below,sloped] {\footnotesize $\scriptstyle k_2$} (*);
\draw[->] (Y1) to[out=90,in=270] node[above,sloped] {\footnotesize $\scriptstyle k_3$} (Y2);
\draw[->] (Y2) to[out=10,in=155] node[above,sloped] {\footnotesize $\scriptstyle k_4$} (*);
\draw[->] (*) to[out=175,in=-10] node[above,sloped] {\footnotesize $\scriptstyle k_5x_1x_3$} (Y2);
\draw[->] (*) to[out=20,in=180] node[above,sloped] {\footnotesize $\scriptstyle k_6x_2x_4$} (Y3);
\draw[->] (Y3) to[out=225,in=45] node[above,sloped] {\footnotesize $\scriptstyle k_7$} (Y4);
\draw[->] (Y4) to[out=180,in=0] node[above,sloped] {\footnotesize $\scriptstyle k_8$} (*);
\draw[->] (Y3) to[out=270,in=90] node[above,sloped] {\footnotesize $\scriptstyle k_9$} (Y5);
\draw[->] (Y5) to[out=180,in=-20] node[above,sloped] {\footnotesize $\scriptstyle k_{10}$} (*);
\end{tikzpicture}
\end{center}
By Proposition \ref{nonconect}, the reduced reaction network obtained by elimination of  $\mathcal{Y}$ can be found by 
iteratively eliminating the sets of nodes 
 $\mathcal{Y}_1=\{Y_1,Y_2\}$ and $\mathcal{Y}_2=\{Y_3,Y_4,Y_5\}$. The multidigraphs $\mmG_{\mathcal{Y}_1}$, $\mmG_{\mathcal{Y}_2}$ correspond to the    left-subgraph and right-subgraph
 of $\mmG_{\mathcal{Y}}$, respectively.
 We 
obtain the following reduced reaction network with the specified rate functions:
\begin{align*}
S_1+S_2 & \ce{->[\wk_1(x)]} S_1+S_3 & S_2+S_4 & \ce{->[\wk_2(x)]} S_4+S_5   \\ 
S_2+S_4 & \ce{->[\wk_3(x)]} S_4+S_6 &
S_3+S_7 & \ce{->[\wk_4(x)]} S_2+S_7 
\end{align*}
where
{\small \begin{align*}
\wk_1(x) & =\dfrac{k_1k_3k_4x_1x_2}{k_4(k_2+k_3)}=\dfrac{k_1k_3x_1x_2}{(k_2+k_3)},   &
\wk_2(x) & =\dfrac{k_6k_9k_{10}x_5x_7}{k_8k_{10}(k_7+k_9)}=\dfrac{k_6k_9x_5x_7}{k_8(k_7+k_9)},  \\
\wk_3(x) & =\dfrac{k_6k_7k_{8}x_5x_7}{k_8k_{10}(k_7+k_9)}=\dfrac{k_6k_7x_5x_7}{k_{10}(k_7+k_9)} &   \wk_4(x) & =k_{11}x_3x_7.
\end{align*}}
There is indeed a reaction between every pair of   complexes in $\mmC\setminus\mathcal{Y}$ that are connected by a directed path  through intermediates. Further, 
 the reduced reaction network has mass-action kinetics as well.
\end{example}

\section{Post-translational modification networks (PTMs)}\label{examples}
We conclude  by discussing 
reduction of PTMs. In this section we abuse notation and use $x$ for the vector of concentrations of the original  species set as well as for the concentrations in the reduced reaction network. 
 
A common feature of signalling systems is the incorporation of 
PTMs,  the attachment of some chemical group to a protein, after it has been translated. The most common example is phosphorylation. 
 A mathematical formalism to study PTM networks, that is, a network combining several PTMs, was introduced in \cite{TG-rational}. 

 In \cite{TG-rational,fwptm} PTMs are considered from the point of view of variable elimination, where so-called substrates and intermediates are eliminated.
This provides a system of equations that depends on the enzyme concentrations only. 
Here we study the reduced reaction network on the set of substrates obtained by elimination of the sets of enzymes and intermediates.

We start by giving the definition of a PTM network, which is slightly more general than the one in \cite{TG-rational,fwptm}.
The   species set of a PTM network is the disjoint union of three non-empty species sets:
\begin{itemize}
\item a set of substrates $\mathcal{S}=\{S_1,\dots,S_{p}\}$,
\item a set of enzymes $\mathcal{E}=\{E_1,\dots, E_{m_1}\}$ and
\item a set of intermediates $\mathcal{Y}=\{Y_1,\dots, Y_{m_2}\}$, in the sense of Subsection~\ref{intermediates}.
\end{itemize} 
Allowed reactions, taken with mass-action kinetics, are  of these five types: 
\begin{align*}
  S_i+E_j&\ce{->}Y_l,&  Y_l&\ce{->}S_i+E_j,&Y_i&\ce{->}Y_j,\\
S_i&\ce{->} S_j,&S_{i}+E_{j}&\ce{->}S_{l}+E_{j}.
  \end{align*}
The bottom types  are not considered in \cite{TG-rational,fwptm}.
We assume that  any path
\begin{equation}\label{eq:path}
S_{i_1}+E_{j_1}\ce{->} Y_{l_1}  \ce{->} \cdots \ce{->} Y_{l_t} \ce{->}S_{i_2}+E_{j_2}
\end{equation}
through intermediates
satisfies $j_1=j_2$. This provides a decomposition of $\mathcal{Y}$   into at most $m_1$ disjoint subsets according to the enzyme that ultimately produces them, or to which they dissociate.

With $\mmU=\mathcal{E}\cup \mathcal{Y}$, the multidigraph $\mmG_\mmU$ has a connected component for each enzyme,  which  is strongly connected by the hypothesis  and does not contain $*$. Thus $\mmU$ is linearly eliminable.

The next proposition (proven in Subsection \ref{sec:proofsexamples}) describes the reactions of the reduced reaction network $(\widetilde{\mmC},\widetilde{\mmR})$ obtained by elimination of $\mmU$. Recall the decomposition $\widetilde{\mmR} = \widetilde{\mmR}_1 \cup \widetilde{\mmR}_2$  given in Definition~\ref{defrednet}.
A reaction $S_i\rightarrow S_j$ in the reduced reaction network belongs to $\widetilde{\mmR}_1$ if it already belongs to $\mmR$. Therefore,  we only consider $\widetilde{\mmR}_2$ in the  proposition below. 

\begin{proposition}[PTM networks]\label{prop:ptm}
Let $(\mmC,\mmR)$ be a PTM network and $(\widetilde{\mmC},\widetilde{\mmR})$ be the reduced reaction network obtained by elimination of  $\mmU=\mmE\cup\mathcal{Y}$. Let $S_{i_1},S_{i_2}\in \mmS$ be two substrates.
The reaction $S_{i_1}\rightarrow S_{i_2}$ belongs to $\widetilde{\mmR}_2$ if and only if 
there is a path as \eqref{eq:path} from $S_{i_1}+E$ to $S_{i_2}+E$  in $(\mmC,\mmR)$ for some $E\in \mmE$.
\end{proposition}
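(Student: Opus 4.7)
My plan is to classify the cycles of $\mmG_\mmU$, with $\mmU=\mmE\cup\mathcal{Y}$, and to match each cycle that gives rise to a reduced reaction with a path of the form \eqref{eq:path}. As a first step, I would observe that every reaction in $\mmR_\mmU$ has a species of $\mmU$ in both reactant and product, so $\mmN_\mmU=\mmU$ (no $*$-node). The hypothesis that any path \eqref{eq:path} satisfies $j_1=j_2$ partitions $\mathcal{Y}$ into disjoint subsets $\mathcal{Y}(E_j)$, one per enzyme, and consequently $\mmG_\mmU$ decomposes as a disjoint union of strongly connected components $\mmG_\mmH$ with $\mmH=\{E\}\cup\mathcal{Y}(E)$. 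Every cycle of $\mmG_\mmU$ is therefore contained in some such $\mmG_\mmH$.

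Next I would read off the projections $\zeta(y_r)$ and $\zeta(y'_r)$ for each of the five types of reaction in $\mmR_\mmU$: edges $Y_i\to Y_j$ (from $Y_i\to Y_j$) contribute $(0,0)$; an edge $E\to Y_l$ (from $S_i+E\to Y_l$) contributes $(S_i,0)$; an edge $Y_l\to E$ (from $Y_l\to S_i+E$) contributes $(0,S_i)$; and a self-edge at $E$ (from $S_i+E\to S_l+E$) contributes $(S_i,S_l)$. Because a cycle has no repeated nodes apart from closure, a cycle $\sigma\subset\mmG_\mmH$ visits the enzyme $E$ at most once. If $\sigma$ avoids $E$ entirely, then every edge is of $Y\to Y$ type, so both projection sums vanish and $\sigma\notin\Delta$.

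The remaining cycles are exactly those visiting $E$ once; such a $\sigma$ is either a self-edge at $E$ or has the form $E\to Y_{l_1}\to\cdots\to Y_{l_t}\to E$ with $t\geq 1$. In both cases, the reactant sum of $\widetilde{r}_\sigma$ equals the substrate $S_{i_1}$ from the first reaction of $\sigma$ and the product sum equals the substrate $S_{i_2}$ from the last reaction, which are exactly the endpoint data of a path of the form \eqref{eq:path} from $S_{i_1}+E$ to $S_{i_2}+E$ (allowing $t=0$ for the self-edge case). This gives the desired bijection between cycles in $\mmG_\mmH$ visiting $E$ and paths \eqref{eq:path} through $E$.

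The proposition then follows by matching the two conditions. For the forward direction, a reaction $S_{i_1}\to S_{i_2}\in\widetilde{\mmR}_2$ arises from some $\sigma\in\Delta$, and by the classification above $\sigma$ must visit an enzyme $E$ once, yielding the required path. For the reverse direction, a path \eqref{eq:path} from $S_{i_1}+E$ to $S_{i_2}+E$ induces a cycle $\sigma$ in $\mmG_\mmH$ with $\sum_{e\in\sigma}\zeta(y'_{r(e)}-y_{r(e)})=S_{i_2}-S_{i_1}\ne 0$ (since $S_{i_1}\ne S_{i_2}$ for a genuine reaction), and $\Gamma(\sigma)\ne\emptyset$ because $\mmG_\mmH$ is strongly connected (Remark~\ref{rmk:compGammasigma}); hence $\sigma\in\Delta$ and $\widetilde{r}_\sigma$ is precisely $S_{i_1}\to S_{i_2}$. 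The main subtlety is the cycle classification, in particular ruling out cycles among intermediates only and confirming that $E$ is visited at most once, so that the path readout from $\sigma$ is unambiguous.
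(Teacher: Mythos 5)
Your proposal is correct and follows essentially the same route as the paper: both directions rest on the correspondence between cycles of $\mmG_{\mmU}$ through an enzyme node $E$ and paths of the form \eqref{eq:path}, with $\Gamma(\sigma)\neq\emptyset$ supplied by strong connectedness of each enzyme's component and intermediate-only cycles excluded because their edges contribute nothing to $\zeta(y'_{r(e)}-y_{r(e)})$. Your explicit tabulation of the projections $\zeta(y_r),\zeta(y'_r)$ per reaction type just spells out what the paper compresses into the remark that only edges incident to $E$ correspond to reactions involving substrates.
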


We conclude that if two substrates do not interact with a common enzyme $E$, there is no reaction between them in $\widetilde{\mmR}_2$.

\begin{example} 
Consider  the network in Example \ref{exinterm}, which is a PTM network  with $\mathcal{Y}$ as in the example, $\mathcal{E}=\{S_1,S_4,S_7\}$ a set of enzymes and 
$\mathcal{S}=\{S_2,S_3,S_5,S_6\}$ a  set of substrates. 
By Proposition~\ref{prop:ptm}, the reactions of the reduced reaction network obtained by elimination of $\mathcal{E}\cup \mathcal{Y}$
are
\[
S_2 \ce{<=>} S_3 \hspace{25pt} S_2 \ce{->} S_5\hspace{25pt}S_2 \ce{->} S_6.
\]
\end{example}

\begin{example}[$n$-site phosphorylation system]\label{ex:nsite}
We consider an $n$-site sequential distributive phosphorylation mechanism, which consists of a substrate $S$ that contains $n$ ordered phosphorylation sites.  
We let $S_0$ denote the unphosphorylated form  and  $S_i$ denote the phosphorylated form in which sites $1$ to $i$ are phosphorylated. We assume there is a kinase $E$   that catalyses all  phosphorylation steps and, similarly, a phosphatase $F$  that catalyses dephosphorylation steps. The reaction network associated with this system is
\[
\begin{array}{l}
S_0+E\ce{<=>[\kappa_1^1(x)][\kappa_2^1(x)]}Y_1 \ce{->[\kappa_3^1(x)]} S_1+E \ce{<=>[\kappa_4^1(x)][\kappa_5^1(x)]}Y_2 \ce{->} \dots \ce{<=>}Y_n \ce{->[\kappa_{3n}^1(x)]} S_n+E \\
S_n+F\ce{<=>[\kappa_{1}^2(x)][\kappa_{2}^2(x)]}Z_1 \ce{->[\kappa_{3}^2(x)]} S_{n-1}+F \ce{<=>[\kappa_{4}^2(x)][\kappa_{5}^2(x)]} Z_2\ce{->} \cdots  \ce{<=>} Z_{n} \ce{->[\kappa_{3n}^2(x)]} S_0+F.
\end{array}
\]
Let $\mmU=\{E,F,Y_1,\dots,Y_n,Z_1,\dots,Z_n\}$ and assume the kinetics is $\mmU$-linear. By Proposition~\ref{prop:ptm}, the reduced reaction network obtained by elimination of  $\mmU$ is:
\[
S_0\ce{<=>[\wk_1^1(x)][\wk^2_n(x)]} S_1 \ce{<=>[\wk^1_2(x)][\wk^2_{n-1}(x)]} \dots \ce{<=>[\wk^1_n(x)][\wk^2_1(x)]} S_n.
\]
In order to find the kinetics, we consider the multidigraph $\mmG_{\mmU}$, which has two connected components: 
\begin{center}
\begin{tikzpicture}[inner sep=1pt, circle dotted/.style={dash pattern=on .05mm off 4mm,line cap=round}]
\node (E) at (0,0) {$E$};
\node (X1) at (-2,0) {$Y_1$};
\node (X2) at (-1.3,-1.5) {$Y_2$};
\node (X3) at (0,-1.5) {};
\node (Xn) at (2,0) {$Y_n$};
\node (F) at (5.5,0) {$F$};
\node (Y1) at (3.5,0) {$Z_1$};
\node (Y2) at (4.2,-1.5) {$Z_2$};
\node (Y3) at (5.5,-1.5) {};
\node (Yn) at (7.5,0) {$Z_n$};
\draw[->] (X1) to[out=10,in=170] node[above,sloped] {\scriptsize $ v_{\scriptscriptstyle 2}^{\scriptscriptstyle 1}$} (E);
\draw[->] (X1) to[out=40,in=130] node[above] {\scriptsize $ v_{\scriptscriptstyle 3}^{\scriptscriptstyle 1}$} (E);
\draw[->] (E) to[out=190,in=-10] node[below,sloped] {\scriptsize $ v_{\scriptscriptstyle 1}^{\scriptscriptstyle 1}$} (X1);
\draw[->] (X2) to[out=5,in=265] node[above,sloped] {\scriptsize $ v_{\scriptscriptstyle 5}^{\scriptscriptstyle 1}$} (E);
\draw[->] (X2) to[out=40,in=230] node[above,sloped] {\scriptsize $ v_{\scriptscriptstyle 6}^{\scriptscriptstyle 1}$} (E);
\draw[->] (E) to[out=285,in=-15] node[below,sloped] {\scriptsize $ v_{\scriptscriptstyle 4}^{\scriptscriptstyle 1}$} (X2);
\draw[->] (Xn) to[out=170,in=10] node[above,sloped] {\scriptsize $ v_{\scriptscriptstyle 3n-1}^{\scriptscriptstyle 1}$} (E);
\draw[->] (Xn) to[out=130,in=45] node[above] {\scriptsize $ v_{\scriptscriptstyle 3n}^{\scriptscriptstyle 1}$} (E);
\draw[->] (E) to[out=-10,in=190] node[below,sloped] {\scriptsize $ v_{\scriptscriptstyle 3n-2}^{\scriptscriptstyle 1}$} (Xn);
\draw[line width = 0.7mm, circle dotted] (X3) to[out=0,in=270] (Xn);
\draw[->] (Y1) to[out=10,in=170] node[above,sloped] {\scriptsize $ v_{\scriptscriptstyle 2}^{\scriptscriptstyle 2}$} (F);
\draw[->] (Y1) to[out=40,in=130] node[above] {\scriptsize $ v_{\scriptscriptstyle 3}^{\scriptscriptstyle 2}$} (F);
\draw[->] (F) to[out=190,in=-10] node[below,sloped] {\scriptsize $ v_{\scriptscriptstyle 1}^{\scriptscriptstyle 2}$} (Y1);
\draw[->] (Y2) to[out=5,in=265] node[above,sloped] {\scriptsize $ v_{\scriptscriptstyle 5}^{\scriptscriptstyle 2}$} (F);
\draw[->] (Y2) to[out=40,in=230] node[above,sloped] {\scriptsize $ v_{\scriptscriptstyle 6}^{\scriptscriptstyle 2}$} (F);
\draw[->] (F) to[out=285,in=-15] node[below,sloped] {\scriptsize $ v_{\scriptscriptstyle 4}^{\scriptscriptstyle 2}$} (Y2);
\draw[->] (Yn) to[out=170,in=10] node[above,sloped] {\scriptsize $ v_{\scriptscriptstyle 3n-1}^{\scriptscriptstyle 2}$} (F);
\draw[->] (Yn) to[out=130,in=45] node[above] {\scriptsize $ v_{\scriptscriptstyle 3n}^{\scriptscriptstyle 2}$} (F);
\draw[->] (F) to[out=-10,in=190] node[below,sloped] {\scriptsize $ v_{\scriptscriptstyle 3n-2}^{\scriptscriptstyle 2}$} (Yn);
\draw[line width = 0.7mm, circle dotted] (Y3) to[out=0,in=270] (Yn);
\end{tikzpicture}
\end{center}
The rate functions $v_i$ depend  on $x$.
The connected components give rise to  conservation laws with total amount $T^1$ and $T^2$, respectively.
The elements of $\Delta$  are: 
\noindent
\begin{center}
\begin{minipage}[h]{0.3\linewidth}
\begin{tikzpicture}[inner sep=1pt]
\node (E) at (-2,0) {$E$};
\node (X1) at (0,0) {$Y_i$};
\node (s) at (-2.5,0) {$\sigma^1_i$:};
\draw[->] (E) to[out=10,in=170] node[above,sloped] {\tiny $v_{\scriptscriptstyle {3i-2}}^{\scriptscriptstyle 1}$} (X1);
\draw[->] (X1) to[out=190,in=-10] node[below,sloped] {\tiny $ v_{\scriptscriptstyle 3i}^{\scriptscriptstyle 1}$} (E);
\end{tikzpicture}
\end{minipage}\hspace{1cm}
\begin{minipage}[h]{0.3\linewidth}
\begin{tikzpicture}[inner sep=1pt]
\node (F) at (-2,0) {$F$};
\node (Y1) at (0,0) {$Z_i$};
\node (s) at (-2.5,0) {$\sigma^2_i$:};
\draw[->] (F) to[out=10,in=170] node[above,sloped] {\tiny $v_{\scriptscriptstyle {3i-2}}^{\scriptscriptstyle 2}$} (Y1);
\draw[->] (Y1) to[out=190,in=-10] node[below,sloped] {\tiny $ v_{\scriptscriptstyle 3i}^{\scriptscriptstyle 2}$} (F);
\end{tikzpicture}
\end{minipage}
\end{center}
for $i=1,\dots,n$.
For $l=1,2$ and $i=1,\dots,n$ the reaction with rate function $\wk_i^l(x)$ corresponds to the cycle $\sigma_i^l$ and is as follows:
\begin{align*}
\wk_i^l(x) & =q_l(x)v_{3i}^lv_{3i-2}^l\prod\limits_{j=1,j\neq i}^{n}(v_{3j-1}^l+v_{3j}^l)\\
q_l(x) & =T^l\, \Big( {\prod\limits_{j=1}^{n}(v_{3j-1}^l+v_{3j}^l)+\sum\limits_{i=1}^{n}v_{3i-2}^l\prod\limits_{j=1,j\neq i}^{n}(v_{3j-1}^l+v_{3j}^l)}\Big)^{\!-1}.
\end{align*}
\end{example}

\section{Proofs}\label{sec:proofs}

 In this section we present the technical details and proofs of the results stated in the previous sections. This section is organised such that it follows the structure of the rest of the paper. 

\subsection{Preliminaries}\label{sec:proofspreliminaries}

We give the precise definition of the digraph $\widehat{\mmG}$ introduced in Section~\ref{preliminaries} and some associated  functions that will be used later on.

\begin{definition}\label{defhat} 
Let $\mmG=(\mmN, \mmE)$ be a labeled multidigraph and let $\mmE'=\{e\in \mathcal{E}\st s(e)\neq t(e)\}$ be the set of edges that are not self-edges. The labeled digraph $\widehat{\mmG}=(\widehat{\mmN}, \widehat{\mathcal{E}})$ associated with $\mmG$ is the graph with 
\begin{align*}
\widehat{\mmN} & =\{N\in\mmN\st \exists\, e\in\mmE' \text{ with } t(e)=N \text{ or } s(e)=N\} \\
\widehat{\mathcal{E}}& =\{N_1\rightarrow N_2\st \exists\, e\in \mmE' \text{ with } s(e)=N_1\text{ and } t(e)=N_2\}.
\end{align*}
\end{definition}
Note that by  definition $\widehat{\mmN}\subseteq \mmN$ and  that the inclusion might be strict if there is a node in $\mmG$ that is only connected to itself.
We define a surjective map from $\mmE'$   to $\widehat{\mmE}$ as follows: 
\begin{equation}\label{eq:beta}
\begin{array}{cccc}
\beta\colon &\mmE' &\rightarrow  &\widehat{\mathcal{E}}\\
&e  &\mapsto & s(e)\rightarrow t(e).

\end{array} 
\end{equation}
Using this map, the labeling  $\widehat{\pi}$ for $\widehat{\mmG}$ is defined as 
\[
\widehat{\pi}(\widehat{e})=\sum\limits_{e\in \beta^{-1}(\widehat{e})} \pi(e).
\]

\subsection{Elimination of variables}\label{sec:proofselimination}

Theorem~\ref{elim} is stated and proven in \cite{Fel_elim} using the digraph $\widehat{\mmG}_{\mmU}$ instead of 
the multidigraph $\mmG_\mmU$ as we do here. This affects the definition of $q_\mmH(x)$ as well as the sets of spanning trees. Either way the functions $\varphi$  agree because  the computations performed using  $\widehat{\mmG}_{\mmU}$ or $\mmG_\mmU$ agree. It is shown in the next lemma.

\begin{lemma} \label{trees2graphs}
Let $\mmG=(\mmN, \mmE)$ be a labeled multidigraph   and let $\widehat{\mmG}=(\widehat{\mmN}, \widehat{\mmE})$ be the associated digraph given in Definition~\ref{defhat}. Let $N\in\widehat{\mmN}$  and let  $\widehat{\Theta}(N)$,  $\Theta(N)$  be the set of spanning trees rooted at $N$ of  $\widehat{\mmG}$ and $\mmG$, respectively. Then
\[
\sum_{\widehat{\tau} \in \widehat{\Theta}(N)}\widehat{\pi}(\widehat{\tau})=\sum_{\tau \in \Theta(N)}\pi(\tau). 
\] 
\end{lemma}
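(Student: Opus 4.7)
The plan is to construct an explicit bijection between $\Theta(N)$ and the disjoint union (over $\widehat{\tau} \in \widehat{\Theta}(N)$) of choice-sets lifting each edge of $\widehat{\tau}$ to an edge of $\mmG$, and then to expand the sum of labels using this bijection together with the definition of $\widehat{\pi}$.

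First, I would observe two elementary facts about spanning trees. A spanning tree $\tau \in \Theta(N)$ contains no self-edge, because a self-edge at a node $M$ would be a $1$-cycle in the underlying undirected graph, contradicting acyclicity. Likewise, $\tau$ cannot contain two distinct edges with the same source-target pair (nor a pair in opposite directions), since they would yield a $2$-cycle in the underlying undirected graph. Consequently, every edge of $\tau$ lies in $\mmE'$ and the map $\beta$ of \eqref{eq:beta} is injective when restricted to the edge set of $\tau$. Therefore $\beta$ extends edge-wise to a well-defined map $\tau \mapsto \widehat{\tau} := \beta(\tau)$ which, because $\beta$ preserves source and target, sends $\Theta(N)$ into $\widehat{\Theta}(N)$ (the image has the same source, target and node incidences, so its underlying undirected graph is connected and acyclic with $N$ the unique sink).

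Next, I would describe the fibres of this map. Given $\widehat{\tau} \in \widehat{\Theta}(N)$, any choice of one edge $e_{\widehat{e}} \in \beta^{-1}(\widehat{e})$ for each $\widehat{e} \in \widehat{\tau}$ yields a subgraph $\tau$ of $\mmG$ with the same underlying undirected structure as $\widehat{\tau}$; in particular $\tau \in \Theta(N)$ and $\beta(\tau) = \widehat{\tau}$. Conversely, any $\tau$ in the fibre over $\widehat{\tau}$ determines such a choice, since by the observations above $\tau$ contains exactly one edge above each $\widehat{e}$. This gives a bijection
\[
\{\tau \in \Theta(N) \st \beta(\tau) = \widehat{\tau}\} \;\longleftrightarrow\; \prod_{\widehat{e} \in \widehat{\tau}} \beta^{-1}(\widehat{e}).
\]

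Finally, I would perform the summation. Partitioning $\Theta(N)$ by the fibres of $\beta$ and applying the bijection,
\[
\sum_{\tau \in \Theta(N)} \pi(\tau) \;=\; \sum_{\widehat{\tau} \in \widehat{\Theta}(N)} \ \sum_{(e_{\widehat{e}})} \prod_{\widehat{e} \in \widehat{\tau}} \pi(e_{\widehat{e}}) \;=\; \sum_{\widehat{\tau} \in \widehat{\Theta}(N)} \prod_{\widehat{e} \in \widehat{\tau}} \Biggl(\sum_{e \in \beta^{-1}(\widehat{e})} \pi(e)\Biggr) \;=\; \sum_{\widehat{\tau} \in \widehat{\Theta}(N)} \widehat{\pi}(\widehat{\tau}),
\]
where the middle equality is expansion of a product of sums and the last equality is the definition of $\widehat{\pi}$ recalled in Subsection~\ref{sec:proofspreliminaries}.

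The one technical point that I would be careful about is the handling of nodes of $\mmG$ that are connected only to themselves (and so lie in $\mmN \setminus \widehat{\mmN}$): such a node would force $\Theta(N) = \emptyset$ because it disconnects the underlying undirected graph from $N$, so the identity holds trivially in that case. Apart from this, the argument is purely combinatorial, and the main (mild) obstacle is justifying that the edgewise lifting described above really produces a valid spanning tree—this reduces to checking that neither self-edges nor parallel edges can be introduced, which is guaranteed by the definition of $\beta^{-1}(\widehat{e})$ (all edges share a common source and target distinct from each other, and there is exactly one lifted edge per $\widehat{e} \in \widehat{\tau}$).
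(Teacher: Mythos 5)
Your proposal is correct and follows essentially the same route as the paper: both arguments extend $\beta$ edgewise to a surjection $\Theta(N)\to\widehat{\Theta}(N)$, partition $\Theta(N)$ into its fibres, and obtain the identity by expanding the product of sums defining $\widehat{\pi}(\widehat{\tau})$. Your additional observations (injectivity of $\beta$ on the edges of a spanning tree, and the trivial case of a node connected only to itself) are correct refinements that the paper leaves implicit.
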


\begin{proof}
Since a spanning tree cannot contain a self-edge, the map  $\beta$ in \eqref{eq:beta} extends to a surjective map from $\Theta(N)$ to $\widehat{\Theta}(N)$.
 In particular, 
\begin{equation}\label{hattrees}
\Theta(N)= \coprod\limits_{\widehat{\tau} \in \widehat{\Theta}(N)}\beta^{-1}(\widehat{\tau}).
\end{equation}
Let $\widehat{\tau}$ be a  spanning tree rooted at $N$ in $\widehat{\mmG}$. Then, 
\[
\begin{array}{c}
\widehat{\pi}(\widehat{\tau})=\prod\limits_{\widehat{e}\in \widehat{\tau}}\widehat{\pi}(\widehat{e})=
\prod\limits_{\widehat{e}\in \widehat{\tau}}\ \sum\limits_{e\in\beta^{-1}(\widehat{e})} \pi(e)=
\sum\limits_{\tau\in\beta^{-1}(\widehat{\tau})}\ \prod\limits_{e\in\tau}\pi(e)=
\sum\limits_{\tau \in \beta^{-1}(\widehat{\tau})}\pi(\tau).
\end{array}
\]  
Therefore,   using \eqref{hattrees} we obtain
\[
\sum_{\tau \in \Theta(N)}\pi(\tau) = \sum_{\widehat{\tau} \in \widehat{\Theta}(N)}\ \sum\limits_{\tau\in\beta^{-1}(\widehat{\tau})} \pi(\tau) =\sum_{\widehat{\tau} \in \widehat{\Theta}(N)}\widehat{\pi}(\widehat{\tau}).
\]
\end{proof}

\subsection{Reduced Network}\label{sec:proofsrednet} 

In this section we first prove   that the definition of $\Gamma(\sigma)$ in \eqref{eq:gammasigma},
$$ \Gamma(\sigma) = \{ \tau \cup e \st \sigma\setminus e \subset \tau, \ \tau\in \Theta(s(e)) \},$$
is independent of the chosen edge $e$, cf. Proposition~\ref{constGamma}. Subsequently, we prove Theorem \ref{rednet}.

We define the following set of sub-multidigraphs of $\mmG_{\mmU}$:  
\begin{equation}\label{defGamma}
\Gamma:=\big\{ \gamma=\tau \cup e\st \tau\in\Theta(s(e)),\quad e\in \mmG_{\mmU} \big\}.
\end{equation}
Each element of $\Gamma$ is the union of 
an edge  $e$ of $\mmG_\mmU$ 
and a spanning tree (of the connected component in $\mmG_\mmU$ that contains $e$) that is rooted  at the source of the edge,  $s(e)$.
Any multidigraph $\gamma\in\Gamma$ contains a unique cycle. Indeed, for $\gamma=\tau \cup e$ as in \eqref{defGamma},
the cycle is obtained by joining $e$ and the path in $\tau$ from $t(e)$ to $s(e)$, which exists because $\tau$ is rooted at $s(e)$.
In particular, the cycle contains the edge $e$.

\begin{proposition}\label{constGamma}
Let $\sigma$ be a cycle of  $\mmG_{\mmU}$. For any edge $e\in \sigma$,
$$ \{ \tau \cup e \st \sigma\setminus e \subset \tau, \ \tau\in \Theta(s(e)) \}=\{\gamma\in \Gamma\ |\ \sigma \subset \gamma\}.$$
\end{proposition}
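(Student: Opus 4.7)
The plan is to prove both inclusions of the asserted equality. Write $L(e) = \{\tau \cup e \mid \sigma\setminus e \subset \tau,\ \tau \in \Theta(s(e))\}$ for the left-hand side and $R = \{\gamma \in \Gamma \mid \sigma \subset \gamma\}$ for the right-hand side, where $\Gamma$ is the set defined in \eqref{defGamma}. The inclusion $L(e)\subseteq R$ is immediate: if $\gamma = \tau \cup e$ with $\tau \in \Theta(s(e))$ then $\gamma\in\Gamma$ by \eqref{defGamma}, while $\sigma = (\sigma\setminus e)\cup \{e\}\subset \tau\cup \{e\} = \gamma$.

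For the reverse inclusion $R\subseteq L(e)$ I would exploit the key observation, recorded in the paragraph following \eqref{defGamma}, that each $\gamma\in\Gamma$ contains a unique cycle, and that for any decomposition $\gamma = \tau'\cup e'$ with $\tau'\in\Theta(s(e'))$ this unique cycle passes through $e'$. Given $\gamma\in R$, uniqueness forces $\sigma$ to be this cycle, and in particular $e'\in\sigma$. Setting $\tau := \gamma\setminus e$ and noting that trivially $\sigma\setminus e\subset \tau$, the remaining task is to show $\tau\in\Theta(s(e))$, i.e.\ that $\tau$ is a spanning tree of $\mmG_{\mmH(\sigma)}$ rooted at $s(e)$.

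The core step is a short counting argument on out-degrees. In $\tau'\in\Theta(s(e'))$ each node of $\mmG_{\mmH(\sigma)}$ other than $s(e')$ has exactly one outgoing edge, while $s(e')$ has none; adjoining $e'$ makes every node of $\gamma$ have exactly one outgoing edge. Since $e$ is an outgoing edge of $s(e)$ in $\gamma$, it must coincide with the unique such edge, so removing $e$ leaves $s(e)$ with no outgoing edge and every other node with out-degree $1$. Acyclicity and connectivity of the underlying undirected graph of $\tau$ then follow from $e\in\sigma$: deleting an edge of the unique cycle of a connected graph breaks that cycle without disconnecting the graph. Hence $\tau$ is a spanning tree rooted at $s(e)$.

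The main obstacle, though modest, is arranging the out-degree bookkeeping cleanly: one has to combine the structure of $\tau'$, the observation that the unique outgoing edge of $s(e)$ in $\gamma$ is forced to be $e$ itself, and the preservation of undirected connectivity upon removing an edge of the unique cycle. Once these ingredients are in place, the equality $L(e) = R$ follows and the edge-independence of $\Gamma(\sigma)$ is immediate, since $R$ does not depend on the choice of $e\in\sigma$.
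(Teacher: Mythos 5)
Your proof is correct and follows essentially the same two-inclusion strategy as the paper: the inclusion $L(e)\subseteq R$ is immediate from the definition of $\Gamma$, and the reverse inclusion uses the uniqueness of the cycle contained in each $\gamma\in\Gamma$ to identify $\sigma$ with that cycle. The only difference is that you explicitly verify, via the out-degree count and the preservation of connectivity when deleting a cycle edge, that $\gamma\setminus e\in\Theta(s(e))$ for the \emph{given} edge $e\in\sigma$ (not just the edge used to build $\gamma$), a step the paper's proof asserts without detail; your elaboration is correct and fills in exactly what the paper leaves implicit.
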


\begin{proof}
Without loss of generality we assume that $\mmG_\mmU$ is connected.
If $\gamma\in\Gamma$ and $\sigma \subset \gamma$, then $\gamma=\tau\cup e$ with $\tau\in\Theta(s(e))$ and $e\in\sigma$. Clearly  $\tau$ is  a spanning tree that contains $\sigma\setminus e$.
Conversely, if $\tau\in\Theta(s(e))$ is a spanning tree containing $\sigma\setminus e$, then $\tau\cup e\in\Gamma$ and $\sigma \subset \tau\cup e$.
\end{proof}

Therefore, $\Gamma(\sigma)$ is the subset of $\Gamma$ whose elements contain $\sigma$. This shows that 
the definition of $\Gamma(\sigma)$ in \eqref{eq:gammasigma} is independent of the choice of edge $e$.
Note that $\Gamma$ is the disjoint union of $\Gamma(\sigma)$ for all cycles $\sigma$.

The next proof shows  that the reduced ODE system \eqref{eq:newODE}  is the ODE system associated with the reduced reaction network  in Definition \ref{defrednet}. 

\begin{proof}[Proof of Theorem \ref{rednet}: \textbf{Reduced reaction network}]
Let 
\begin{align*}
f(x) &= \sum_{ \widetilde{r}\in \wR_1} \wk_{\wr}(x) (y'_{\widetilde{r}}-y_{\widetilde{r}})  + \sum_{\widetilde{r}\in \wR_2}
 \widetilde{\kappa}_{\widetilde{r}}(x)(y'_{\widetilde{r}}-y_{\widetilde{r}}).
  \end{align*}
 We want to prove that
 $ f(x)=\widetilde{g}(x) = \zeta(g(x,\varphi(x))). $
Observe  that 
\begin{align*} 
\widetilde{g}(x)  & = 
\sum\limits_{r\notin\mmRu}\kappa_r(x,\varphi(x))\zeta(y'_r-y_r) +\sum\limits_{r\in\mmRu}\kappa_r(x,\varphi(x)) \zeta(y'_r-y_r).
\end{align*}
By definition of $\wR_1$ and $\wk_{\wr}(x)$ for $\wr\in \wR_1$, the first summand of $f(x)$ and that of $\widetilde{g}(x)$ agree. Using the definition of $\wR_2$, all we  need to prove is that
\begin{equation}\label{eq:finalstep}
\sum_{\sigma\in \Delta }
 q_{\mmH(\sigma)}(x)\Pi(\sigma) \sum\limits_{e\in\sigma}\zeta(y'_{r(e)}-y_{r(e)})
 = \sum\limits_{r\in\mmRu}\kappa_r(x,\varphi(x))\zeta (y'_r-y_r).
\end{equation}
The sums on the right- and left-hand sides can be decomposed into sums over the connected components of $\mmG_\mmU$ that contain $\sigma$ or $e(r)$, respectively. Therefore, it is enough to show that \eqref{eq:finalstep} holds when $\mmG_\mmU$ 
is connected. We let   $q(x)=q_{\mmH(\sigma)}(x) = q_\mmU(x)$. 

Given $r\in \mmRu$,  either $s(e(r))=U_i$ for some $i$ if $\rho(y_r)\neq 0$, or $s(e(r))=*$ if $\rho(y_r)= 0$.
In the former case $U_i$ is the only species in $\mmU$ involved in the reaction  $r$. 
Since the kinetics $\kappa$ is $\mmU$-linear, we have for  $r\in \mmRu$
$$ \kappa_r(x,\varphi(x)) = \begin{cases}
\varphi_{i}(x) v_r(x)  & \textrm{if }\quad s(e(r))=U_i  \\
v_r(x) & \textrm{if } \quad s(e(r))=*.  
\end{cases}$$
By the definition of $q(x)$ in \eqref{qnoncut} and of $\varphi_i(x)$ in \eqref{eq:varphi}, we have
\[
\kappa_r(x,\varphi(x))  = q(x)\left( \sum\limits_{\tau \in \Theta(s(e(r)))}\pi(\tau)\right) v_r(x),\qquad \text{for all }r\in \mmRu. 
\]
Comparing this equality with \eqref{eq:finalstep}, the statement follows if the following  holds:
\begin{align}\label{eq:intstep}
&\sum_{r\in \mmRu}\hspace{3pt} \sum_{\tau \in \Theta(s(e(r)))}\pi(\tau) v_r(x) \zeta(y'_r-y_r)=
&\sum_{\sigma\in \Delta }\hspace{3pt}
\sum\limits_{e\in\sigma} \Pi(\sigma)   \zeta(y'_{r(e)}-y_{r(e)}).
\end{align}
We show the equality from right to left. Let $\extDelta$ be the set of cycles in the multidigraph $\mmG_{\mmU}$. Since either $\sum\limits_{e\in\sigma}\zeta(y'_{r(e)}-y_{r(e)})= 0$ 
or $\Pi(\sigma)= 0$ for $\sigma\in\extDelta\setminus  \Delta$, we obtain that 
\begin{align*}
 & \sum_{\sigma\in\Delta }
\sum\limits_{e\in\sigma} \Pi(\sigma) \zeta (y'_{r(e)}-y_{r(e)})  =  \sum_{\sigma\in\extDelta }
\sum\limits_{e\in\sigma} \Pi(\sigma)  \zeta(y'_{r(e)}-y_{r(e)})  \\ 
&\hspace{80pt}=\sum\limits_{\sigma\in\extDelta}\sum\limits_{e\in\sigma}\sum\limits_{\gamma\in\Gamma(\sigma)}\pi(\gamma\setminus e)\pi(e)\zeta(y'_{r(e)}-y_{r(e)})=(\star). 
\end{align*}
Each digraph $\gamma\setminus e$ in the sum is a spanning tree rooted at $s(e)$. 
There is a bijection between the set of triplets $(\sigma,e,\gamma)$ such that $\sigma\in \extDelta,e\in \sigma,\gamma\in \Gamma(\sigma)$ and the set of pairs $(e,\tau)$ such that $e$ is an edge of $\mmG_\mmU$ and $\tau$ is a spanning tree rooted at $s(e)$.
Using further that $\pi(e)= v_{r(e)}(x)$ and the correspondence \eqref{defre}, we  obtain: 
\begin{align*}
(\star)&=\sum\limits_{e\in \mmG_{\mmU}}\sum\limits_{\tau \in \Theta(s(e))}\pi(\tau)\pi(e)\zeta(y'_{r(e)}-y_{r(e)})\\
&=\sum\limits_{r\in \mmRu}\sum\limits_{\tau \in \Theta(s(e(r)))}\pi(\tau)v_r(x)\zeta(y'_{r}-y_{r}).
\end{align*}
This shows that \eqref{eq:intstep} holds, which concludes the proof.
 \end{proof}

\subsection{Basic properties of the reduced network}\label{sec:proofsBasicProp}

In this section we prove the results  about the kinetics and the conservation laws of the reduced reaction network in relation to the original reaction network.

\paragraph{\textbf{Kinetics. }}

\begin{proof}[Proof of Proposition \ref{prop:pisigmastandard}: \textbf{Standard kinetics}]
(i) By definition
\[
\pi(\sigma)=\prod_{e\in\sigma} v_{r(e)}(x),\qquad y_{\widetilde{r}_{\sigma}}=\sum\limits_{e\in\sigma}\zeta(y_{r(e)}).
\]
Hence, $\pi(\sigma)=0$ if and only if $v_{r(e)}(x)=0$ for some $e\in\sigma$. Further,  we 
have that $\supp(y_{\widetilde{r}_{\sigma}}) = \bigcup\limits_{e\in \sigma}\supp(\zeta(y_{r(e)}))$.  
Assume that the kinetics $\kappa$ is standard on $\Omega\times\R_{\ge 0}^m$.   For $x\in \widetilde{\Omega}$, we have
\begin{align*}
\supp(y_{\widetilde{r}_{\sigma}}) \not\subseteq \supp(x)   & \Leftrightarrow    \exists \ e\in \sigma \colon
\supp(\zeta(y_{r(e)}))\not\subseteq \supp(x) \\ &  \Rightarrow 
\exists\ e\in \sigma \colon  v_{r(e)}(x)=0  \quad \Leftrightarrow   \pi(\sigma)=0.
 \end{align*}
 This shows that $\pi(\sigma)$ is standard on $\widetilde{\Omega}$ for $\widetilde{r}_\sigma$. 
If the kinetics is fully standard, then the reverse of the second implication holds, showing that $\pi(\sigma)$ is also fully standard for $\widetilde{r}_\sigma$.

(ii) For $\widetilde{r}_{\sigma}\in\widetilde{\mmR}_2$ defined by  $\sigma\in\Delta$
we have $\widetilde{\kappa}_{\widetilde{r}_{\sigma}}(x)=q(x)\Pi(\sigma)$. By equation \eqref{defPi}, $\pi(\sigma)$ is a factor of $\Pi(\sigma)$ and hence $\widetilde{\kappa}_{\widetilde{r}_{\sigma}}$ is standard on $\widetilde{\Omega}$.

For $\widetilde{r}=\zeta(y_r)\rightarrow \zeta(y'_r) \in\widetilde{\mmR}_1$ with $r\in \mmR\setminus \mmR_\mmU$, we have $\widetilde{\kappa}_{\widetilde{r}}(x)=\kappa_r(x,\varphi(x))$ by Definition~\ref{defrednet}. 
Since $r\notin  \mmR_\mmU$, then $\supp(y_r)=\supp(\zeta(y_r))=\supp(y_{\widetilde{r}})$.
Since $\kappa_r$ is standard on $\Omega\times\R_{\ge 0}^m$, then $\kappa_r(x,u)$ vanishes if $x\in \Omega$ fulfils $\supp(y_{\widetilde{r}})\not\subseteq \supp(x)$. Since the denominators of  $q_\mmH(x)$ do not vanish for $x\in \widetilde{\Omega}$, we have $\widetilde{\kappa}_{\widetilde{r}}(x)=0$ whenever $\supp(y_{\widetilde{r}})\not\subseteq \supp(x)$ and $x\in \widetilde{\Omega}$, showing that $\widetilde{\kappa}_{\widetilde{r}}(x)$ is standard on $\widetilde{\Omega}$.
\end{proof}

\begin{proof}[Proof of Proposition \ref{prop:special}: \textbf{Fully standard kinetics}]
Throughout the proof all functions are assumed to be evaluated at $x$.
Assume that statement (ii) holds. 
Let $e\in\sigma$ be an edge such that $s(e)=U_j\in \mmU$. Then
\[
\Pi(\sigma)=\pi(e)\sum\limits_{\gamma\in\Gamma(\sigma)}\pi(\gamma\setminus e)\leq \pi(e) \sum\limits_{\tau\in\Theta(U_j)}\pi(\tau)=\pi(e)\dfrac{\varphi_j(x)}{q_{\mmH(j)}(x)}.
\] 
Since $\varphi_j(x)=0$,  $\Pi(\sigma)=0$ and statement (i) holds.

Assume now that statement (i) holds. 
Let $U_j \in \mmU$ be a node in  $\sigma$, $\tau$ a spanning tree rooted at $U_j$ and $e\in \sigma$  the edge with source $U_j$. 
We construct a new tree $\widehat{\tau}$ as follows: for every $U_l\neq U_j$ in $\sigma$, replace the only edge with source $U_l$ in the tree $\tau$ by the edge in $\sigma$ with source node $U_l$. The obtained subgraph $\widehat{\tau}$ is also a spanning tree rooted at $U_j$ that satisfies
$\widehat{\tau}\cup e\in \Gamma(\sigma)$.

By assumption (i) and the definition of $\Pi(\sigma)$, $\pi(\widehat{\tau}\cup e)=0$. Since $\kappa(x,u)$ is fully standard, $\pi(e')\neq 0$ for  all $e'\in \sigma$ by Proposition~\ref{prop:pisigmastandard}. 
Therefore there must be an edge $\widehat{e}$ of $\widehat{\tau}$ that does not belong to $\sigma$ and such that $\pi(\widehat{e})=0$.
By construction, this edge is also an edge of $\tau$ and hence $\pi(\tau)=0$. 

This proves that $\pi(\tau)=0$ for all $\tau\in \Theta(U_j)$, which implies that  $\varphi_j(x)=0$  (cf. \eqref{eq:varphi}). Hence statement (ii) holds.
\end{proof}

\begin{proof}[Proof of Proposition \ref{kineticsmassaction}: \textbf{Mass-action kinetics}]
The label of an edge of $\sigma$ is $k_{r(e)}x^{\zeta(y_{r(e)})}$, where $k_{r(e)}>0$ is the reaction rate constant of  reaction $r(e)$.  
Using the definition of  $y_{\widetilde{r}_{\sigma}}$ in Definition~\ref{defrednet}, we obtain
\[
\pi(\sigma)=\prod\limits_{e\in\sigma}k_{r(e)}x^{\zeta({y}_{r(e)})}=\left(\prod\limits_{e\in\sigma}k_{r(e)}\right) x^{\sum\limits_{e\in \sigma} \zeta({y}_{r(e)})} = 
\left(\prod\limits_{e\in\sigma}k_{r(e)}\right) x^{y_{\widetilde{r}_{\sigma}}}.
\]
Hence $\pi(\sigma)$ has the claimed form with $k_{\sigma}=\prod\limits_{e\in\sigma}k_{r(e)}$.  
The last statement is a consequence of \eqref{defPi}.
\end{proof}

\paragraph{\textbf{Conservation laws. }}
We prove here Theorem \ref{relCL}. Before that, we  introduce some graphical constructions and a technical lemma necessary for the proof. 

Let $\mmG=(\mmN, \mathcal{E})$ be a multidigraph and assume that the sets $\mmN=\{N_1,\dots,N_n\}$ and $\mmE=\{e_1,\dots,e_\ell\}$ are ordered. We define the following objects:
\begin{enumerate}
\item The \textbf{incidence matrix} $C_{\mmG}$ of $\mmG$ is the 
$n\times \ell$ real matrix such that
\[
(C_{\mmG})_{ij}=\left\{ 
\begin{array}{rl}
1 & \quad\text{if } \quad N_i=t(e_j)\neq s(e_j)\\ 
-1 & \quad\text{if }\quad N_i=s(e_j)\neq t(e_j)\\
0& \quad \text{otherwise}.
 \end{array}\right.
\]
\item The \textbf{cycle space} of $\mmG$ is the kernel of the incidence matrix.
\end{enumerate}

Let $\extDelta$ be the set of cycles of $\mmG$. 
For $\sigma\in \extDelta$, the vector $\nu_\sigma$ with $(\nu_\sigma)_i=1$ if $e_i$ is an edge of the cycle and $(\nu_\sigma)_i=0$ otherwise, belongs to the cycle space of $\mmG$. Moreover,   the 
elements $\nu_\sigma$ correspond to the irreducible elements in the cycle space with all non-zero components equal to one. That is, $\nu_\sigma$  cannot be expressed as the positive sum of two vectors in the cycle space with non-negative integer coordinates. The elements in $\extDelta$ are also called elementary cycles in the literature \cite{Graphs}. 
We choose an order for the set of cycles $\extDelta=\{\sigma_1,\dots,\sigma_{|\extDelta|}\}$, and let $H$ be the $|\extDelta|\times \ell$ matrix whose $i$th row is $\nu_{\sigma_i}$.

\begin{lemma}\label{inccycles} 
Let $\mmG$ be a strongly connected multidigraph. 
Then $\ker H = \im C_{\mmG}^t$.
\end{lemma}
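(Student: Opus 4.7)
The plan is to prove the two inclusions separately, using the orthogonal-complement duality
\[
\ker H = (\operatorname{rowspace} H)^\perp, \qquad \im C_{\mmG}^t = (\ker C_{\mmG})^\perp,
\]
so that the statement reduces to showing $\operatorname{rowspace} H = \ker C_{\mmG}$, i.e.\ that the elementary directed cycle vectors $\nu_\sigma$ span the cycle space of $\mmG$ over $\R$.

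For the easy inclusion $\im C_{\mmG}^t \subseteq \ker H$, I would compute the matrix product $HC_{\mmG}^t$ entry by entry. Its $(i,j)$-entry equals $\sum_{e \in \sigma_i}(C_{\mmG})_{je}$, which counts, with signs, the edges of the directed cycle $\sigma_i$ incident to $N_j$: $+1$ for each edge of $\sigma_i$ ending at $N_j$, $-1$ for each starting at $N_j$. Since $\sigma_i$ is an elementary cycle, $N_j$ is either not on $\sigma_i$ (contribution $0$) or lies on $\sigma_i$ with exactly one incoming and one outgoing edge (contribution $+1-1=0$). Thus $HC_{\mmG}^t = 0$, which gives the first inclusion and also shows $\operatorname{rowspace} H \subseteq \ker C_{\mmG}$.

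The heart of the proof is the reverse inclusion $\ker C_{\mmG} \subseteq \operatorname{rowspace} H$, which I would handle in two steps via a flow-decomposition argument. First, the non-negative case: for $v \in \ker C_{\mmG}$ with $v \ge 0$, I induct on $|\supp(v)|$. Choose an edge $e_0$ with $v_{e_0} > 0$; since in-flow equals out-flow at $t(e_0)$, there must be an outgoing edge $e_1$ at $t(e_0)$ with $v_{e_1} > 0$. Continuing, the finiteness of $\mmN$ forces the walk to revisit a node, extracting an elementary directed cycle $\sigma$ with $\sigma \subseteq \supp(v)$. Setting $c = \min_{e \in \sigma} v_e > 0$, the vector $v - c\nu_\sigma$ is still in $\ker C_{\mmG}$, is still non-negative, and has strictly smaller support, so the induction concludes.

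For a general $v \in \ker C_{\mmG}$ I use strong connectivity to reduce to the non-negative case: for every edge $e$ there is a directed cycle $\sigma_{(e)}$ containing $e$, so by choosing a sufficiently large scalar $M > 0$ the vector
\[
v \;+\; M\sum_{e \in \mmE} \nu_{\sigma_{(e)}}
\]
is componentwise non-negative and still lies in $\ker C_{\mmG}$. The first step expresses it as a linear combination of the $\nu_\sigma$, and subtracting the known combination $M\sum_e \nu_{\sigma_{(e)}}$ shows that $v$ itself lies in $\operatorname{rowspace} H$. The main obstacle is precisely this reduction step: strong connectivity is used in an essential way to guarantee that every edge lies on some directed cycle, without which the correction term that pushes $v$ into the non-negative cone would not exist inside $\ker C_{\mmG}$.
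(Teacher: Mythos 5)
Your proof is correct, and its skeleton --- reduce by duality to showing that the elementary cycle vectors $\nu_\sigma$ span $\ker C_{\mmG}$, with the easy inclusion coming from $HC_{\mmG}^t=0$ --- is the same as the paper's. The difference lies entirely in the hard inclusion $\ker C_{\mmG}\subseteq\operatorname{rowspace} H$: the paper simply cites Berge for the fact that a strongly connected graph admits a basis of its cycle space consisting of vectors $\nu_\sigma$, whereas you prove the spanning property from scratch by a conformal flow decomposition (induction on the support of a non-negative circulation, extracting an elementary directed cycle from the support at each step) followed by a positivity reduction, in which strong connectivity is used to place every edge on a directed cycle so that a large multiple of $\sum_{e}\nu_{\sigma_{(e)}}$ pushes an arbitrary circulation into the non-negative cone. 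This buys a self-contained, elementary argument and makes explicit exactly where strong connectivity is needed; the paper's version is shorter but opaque on that point. One detail worth patching: the multidigraphs here may contain self-edges, whose columns of $C_{\mmG}$ are zero, so flow conservation at $t(e_0)$ gives you nothing when $e_0$ is a self-edge; since a self-edge is itself an elementary cycle in the paper's convention, that case is dispatched by subtracting $v_{e_0}\nu_{\{e_0\}}$ directly, and the outgoing edge produced by conservation at $t(e_0)$ in the remaining case is automatically not a self-edge, so your walk argument goes through unchanged.
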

\begin{proof}
By duality it is enough to show that 
$\im H^t=\ker C_{\mmG}$. Since for all $\sigma\in\extDelta$, $\nu_\sigma$ belongs to the cycle space, 
we have $C_{\mmG}H^t=0$, that is, $\im H^t\subseteq \ker  C_{\mmG}$.
Since $\mmG$ is strongly connected, then there exists a basis of $\ker  C_{\mmG}$ whose elements are of the form $\nu_\sigma$ for  $\sigma\in \extDelta$ (see \cite{Be85}). 
Hence $\im H^t\supseteq \ker  C_{\mmG}$ as desired. 
 \end{proof}

We have now all necessary tools to prove the relation between $S^{\bot}$ and $\widetilde{S}^{\bot}$.

\begin{proof}[Proof of Theorem \ref{relCL}: \textbf{Conservation laws}]
(\ref{CL1}) Let $\omega\in S^{\bot}$ and $\widetilde{r}\in\widetilde{\mmR}_1$ be a reaction corresponding to $r\in\mmR\setminus \mmR_{\mmU}$. Since  $\omega\cdot (y'_r-y_r)=0$ and $\rho(y'_r-y_r)=0$, we deduce that $\zeta(\omega)\cdot(y'_{\widetilde{r}}-y_{\widetilde{r}})=0$.

Let $\widetilde{r}_{\sigma}\in\widetilde{\mmR}_2$ for $\sigma\in \Delta$ and 
$\eta=\sum\limits_{e\in\sigma} (y'_{r(e)}-y_{r(e)})$. By Remark \ref{simplifspecies},  $\rho(\eta)=0$ and by definition
$ \zeta(\eta)=y'_{\widetilde{r}_{\sigma}}-y_{\widetilde{r}_{\sigma}}$. Thus we have
$$ \zeta(\omega)\cdot (y'_{\widetilde{r}_{\sigma}}-y_{\widetilde{r}_{\sigma}}) =
 \zeta(\omega)\cdot \zeta(\eta) + \rho(\omega)\cdot \rho(\eta) = \omega \cdot \eta =0.   $$
This proves  (\ref{CL1}).

(\ref{CL2}) Assume that each connected component of  $\mmG_{\mmU}$ is strongly connected. 
Given $\widetilde{\omega}\in \widetilde{S}^{\bot}$, we want to prove that there exist $\alpha_1,\dots, \alpha_m$ such that 
$$\omega=(\widetilde{\omega}_1,\dots, \widetilde{\omega}_{p},\alpha_1,\dots,\alpha_m)\in S^{\bot}.$$ That is, such that $\omega A=0$, where $A$ is the stoichiometric matrix of  $(\mmC,\mmR)$ (see Subsection~\ref{sec:temkin}).
Let $\overline{\ell}=|\mmRu|$. We order the set $\mmR$ in such a way that $\mmR\setminus \mmR_\mmU=\{r_{\overline{\ell}+1},\dots,r_{\ell}\}$. Then, $A$  can be written in block form as 
\[
\left(\begin{array}{cc} A_1 & A_3 \\ A_2 & 0_{m\times (\ell-\overline{\ell})} \end{array} \right) \quad\text{ with } A_1\in\R^{p\times \overline{\ell}},\ A_2\in\R^{m\times \overline{\ell}}\text{ and } A_3\in\R^{p\times (\ell-\overline{\ell})}.
\]
The columns of $A_3$ correspond to the reactions in $\wR_1$ and are thus vectors of $\widetilde{S}$ by Definition \ref{defrednet}.  Hence, $\omega \left(\begin{array}{c} A_3 \\  0_{m\times (\ell-\overline{\ell})} \end{array} \right)=0$ for any choice of $\alpha_1,\dots,\alpha_m$.
It follows that 
\begin{align}
\omega A=0 &\Leftrightarrow (\widetilde{\omega}_1,\dots, \widetilde{\omega}_{p})A_1+(\alpha_1,\dots,\alpha_m)A_2=0\nonumber\\
&\Leftrightarrow A_2^t\left( \begin{matrix} \alpha_1 \\ \vdots \\ \alpha_m \end{matrix}\right) = -\left( \begin{matrix} \widetilde{\omega}\cdot \zeta(y'_{r_1}-y_{r_1})\\ \vdots \\ \widetilde{\omega}\cdot \zeta(y'_{r_{\overline{\ell}}}-y_{r_{\overline{\ell}}}) \end{matrix}\right)=:v.\label{CLsyst1}
\end{align}
We can further reorder the species in $\mmU$ and the reactions in $\mmR_\mmU$, such that $A_2$ is a block diagonal matrix, where each block corresponds to one connected component of $\mmG_\mmU$. 
Due to this block structure,  system \eqref{CLsyst1} decomposes into subsystems given by the connected components of $\mmG_\mmU$.
Therefore, it is enough to prove the existence of $\alpha$ for a strongly connected graph $\mmG_\mmU$. Hence we assume $\mmG_\mmU$ is strongly connected.

Let $C_{\mmG}$ be the incidence matrix of $\mmG_{\mmU}$ and $H$ as defined above Lemma \ref{inccycles}. 
We prove that $Hv=0$.  
For $\sigma\in \Delta$, we have that  $\widetilde{r}_{\sigma}\in \widetilde{\mmR}$ and hence
\[
0=\widetilde{\omega}\cdot (y'_{\widetilde{r}_{\sigma}}-y_{\widetilde{r}_{\sigma}})=\sum_{e\in\sigma}\widetilde{\omega}\cdot\zeta(y'_{r(e)}-y_{r(e)}).
\]
For $\sigma\in\extDelta\setminus\Delta$, strong connectedness and Remark~\ref{rmk:compGammasigma} imply that $\Gamma(\sigma)\neq \emptyset$. Thus, by definition of $\Delta$,
\[
0=\sum_{e\in\sigma}\zeta(y'_{r(e)}-y_{r(e)}) \quad\textrm{and hence}\quad 0=\sum_{e\in\sigma}\widetilde{\omega}\cdot\zeta(y'_{r(e)}-y_{r(e)}).
\]
This shows that $v \in \ker H$ and hence by Lemma \ref{inccycles} we have $v \in \im C_{\mmG}^t$.

If $\mmG_\mmU$ does not contain $*$, then $A_2=C_{\mmG}$. Since $v \in\im C_{\mmG}^t$ we deduce that  system \eqref{CLsyst1} has a solution $(\alpha_1,\dots,\alpha_m)$.
If $\mmG_\mmU$  contains $*$, then $A_2=\widetilde{C}_{\mmG}$, with $\widetilde{C}_{\mmG}$ being the matrix obtained from $C_{\mmG}$ by removing the last row, corresponding to the node $*$. We can then rewrite the system of equations \eqref{CLsyst1} as
\begin{equation}\label{CLsyst2}
v=A_2^t\left( \begin{matrix} \alpha_1 \\ \vdots \\ \alpha_m \end{matrix}\right)=C_{\mmG}^t\left(\begin{matrix}\alpha_1\\\vdots\\\alpha_m\\0\end{matrix}\right).
\end{equation}
Since $v\in \im C_{\mmG}^t$, there exists a vector  $b=(b_1,\dots,b_{m+1})^t$ in $\R^{m+1}$ such that  $C_{\mmG}^t b = v$.
Since the column sums of $C_{\mmG}$ are all zero by definition, 
$(1,\dots,1)^t \in \ker C_{\mmG}^t$. Thus
$$ v = C_{\mmG}^t ( b - b_{m+1}(1,\dots,1)^t).  $$
Therefore, $\alpha_i=b_i-b_{m+1}$, for $i=1\dots,m$ is a solution to \eqref{CLsyst2}.
 \end{proof}

\subsection{Iterative elimination}\label{sec:proofssteps}

In this section we prove Proposition \ref{nonconect}. We start with an auxiliary lemma.

\begin{lemma}\label{nonconect2}
Let $\mmU$ be a linearly eliminable set such that  $\mmG_\mmU$ is connected and contains $*$. 
Let $\mmU=\mmH_1\sqcup \mmH_2$ be a decomposition of $\mmU$ such that  $\mmG_\mmU^*=\mmG_{\mmH_1}^* \sqcup \mmG_{\mmH_2}^*$. 
Let $\Theta_{1}(N)$ be  the set of spanning trees rooted at $N\in  \mmH_1\cup \{*\}$ in the sub-multidigraph $\mmG_{\mmH_1}$ of $\mmG_{\mmU}$. For any set of edges $W$ in $\mmG_{\mmH_1}$, it holds
\[q_\mmU(x)\sum\limits_{\tau \in \Theta(N), W\subseteq \tau}\pi(\tau)
=q_{\mmH_1}(x)\sum\limits_{\tau \in \Theta_{1}(N), W\subseteq \tau}\pi(\tau).
\]
\end{lemma}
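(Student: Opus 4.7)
The plan is to reduce the identity to a multiplicative decomposition of spanning trees induced by the cut at $*$. Since $\mmG_\mmU^* = \mmG_{\mmH_1}^* \sqcup \mmG_{\mmH_2}^*$, removing the node $*$ and all its incident edges disconnects $\mmH_1$ from $\mmH_2$, so every directed path in $\mmG_\mmU$ between a node of $\mmH_1$ and a node of $\mmH_2$ must pass through $*$. This structural observation drives everything that follows.

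First I would show that every $\tau \in \Theta(N)$ with $N \in \mmH_1 \cup \{*\}$ splits uniquely as a disjoint union $\tau = \tau_1 \cup \tau_2$, where $\tau_1$ is a spanning tree of $\mmG_{\mmH_1}$ rooted at $N$ and $\tau_2$ is a spanning tree of $\mmG_{\mmH_2}$ rooted at $*$. The point is that any node in $\mmH_2$ reaches $N$ along a unique directed path in $\tau$ that first leaves $\mmH_2$ through $*$; and if $N \neq *$, the out-edge of $*$ in $\tau$ must point into $\mmH_1$, since otherwise the path from $*$ to $N$ would have to traverse $\mmH_2$ and re-enter through $*$, contradicting acyclicity of $\tau$. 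Writing $\Theta_2(*)$ for the set of spanning trees of $\mmG_{\mmH_2}$ rooted at $*$, this yields a bijection
\[
\Theta(N) \;\longleftrightarrow\; \Theta_1(N) \times \Theta_2(*), \qquad \tau \longleftrightarrow (\tau_1,\tau_2).
\]
Because the edge sets of $\mmG_{\mmH_1}$ and $\mmG_{\mmH_2}$ are disjoint, the label factorises as $\pi(\tau) = \pi(\tau_1)\pi(\tau_2)$.

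For a set of edges $W$ in $\mmG_{\mmH_1}$, the condition $W \subseteq \tau$ is equivalent to $W \subseteq \tau_1$ under the bijection, giving
\[
\sum_{\tau \in \Theta(N),\, W \subseteq \tau}\pi(\tau) \;=\; \Bigl(\sum_{\tau_1 \in \Theta_1(N),\, W \subseteq \tau_1}\pi(\tau_1)\Bigr)\Bigl(\sum_{\tau_2 \in \Theta_2(*)}\pi(\tau_2)\Bigr).
\]
Specialising this identity to $N = *$ and $W = \emptyset$ yields in particular $\sum_{\tau \in \Theta(*)}\pi(\tau) = \bigl(\sum_{\tau_1 \in \Theta_1(*)}\pi(\tau_1)\bigr)\bigl(\sum_{\tau_2 \in \Theta_2(*)}\pi(\tau_2)\bigr)$. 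Recalling from \eqref{qnoncut} that $q_\mmU(x) = 1/\sum_{\tau \in \Theta(*)}\pi(\tau)$ and $q_{\mmH_1}(x) = 1/\sum_{\tau_1 \in \Theta_1(*)}\pi(\tau_1)$, dividing the two identities cancels the $\Theta_2(*)$ factor and produces exactly the claimed equality.

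The main obstacle is establishing the bijection cleanly: one must verify that the restrictions $\tau_1,\tau_2$ are genuine spanning trees rooted as claimed, and that gluing any such pair produces a valid element of $\Theta(N)$. Both directions rest on the single fact that $*$ is the unique cut vertex between $\mmH_1$ and $\mmH_2$; the verification is then a matter of counting edges (each non-root node contributes exactly one outgoing edge in the correct piece) and transferring acyclicity from $\tau$ to the two restrictions and back. Everything else is a one-line division of sums.
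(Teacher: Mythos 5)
Your proof is correct and follows essentially the same route as the paper's: decompose each $\tau\in\Theta(N)$ as the union of a spanning tree of $\mmG_{\mmH_1}$ rooted at $N$ and one of $\mmG_{\mmH_2}$ rooted at $*$, factor $\pi(\tau)=\pi(\tau_1)\pi(\tau_2)$, and cancel the common $\Theta_2(*)$ factor after specialising to $N=*$, $W=\emptyset$. Your extra remark about the out-edge of $*$ pointing into $\mmH_1$ when $N\neq *$ is a valid detail the paper leaves implicit.
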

\begin{proof}
Let $\Theta_{2}(*)$ be the set of spanning trees of $\mmG_{\mmH_2}$  rooted at $*$. 
Let $\tau\in \Theta(N)$ be a spanning tree of $\mmG_{\mmU}$ rooted at $N$. The path from any node
$N'\in \mmH_2$ to $N$ contains $*$ by hypothesis. Therefore, 
$\tau$ is the union of a spanning tree  $\tau_1$ of $\mmG_{\mmH_1}$ rooted at $N$ and a spanning tree $\tau_2$ of $\mmG_{\mmH_2}$ rooted at $*$. 
Reciprocally, the union of any pair of spanning trees $\tau_1 \in \Theta_{1}(N)$,  $\tau_2\in \Theta_{2}(*)$ is a spanning tree of $\mmG_{\mmU}$ rooted at $N$.
As subgraphs of $\mmG_{\mmU}$, $\tau_1$ and $\tau_2$ intersect at $*$.
Hence
$
\pi(\tau)=\pi(\tau_1)\pi(\tau_2)
$  
and we obtain
\[
\sum\limits_{\tau\in\Theta(N),W\subseteq \tau}\pi(\tau)=\left(\sum_{\tau_1\in\Theta_{1}(N),W\subseteq \tau_1}\pi(\tau_1)\right) \left(\sum_{\tau_2\in\Theta_{2}(*)}\pi(\tau_2)\right),
\]
where we use that $W$ is contained in $\mmG_{\mmH_1}$.
Using this computation with $W=\emptyset$ and $N=*$, we also have that 
$$q_\mmU(x)^{-1} = \left(\sum_{\tau_1\in\Theta_{1}(*)}\pi(\tau_1)\right) \left(\sum_{\tau_2\in\Theta_{2}(*)}\pi(\tau_2)\right)=
q_{\mmH_1}(x)^{-1}  \left(\sum_{\tau_2\in\Theta_{2}(*)}\pi(\tau_2)\right).$$
Using these expressions the statement of the lemma follows.
\end{proof}

Note that we can write
$$
\Pi(\sigma)= \pi(e)\sum\limits_{\gamma\in \Gamma(\sigma)} \pi(\gamma\setminus e)
$$
for any edge $e$ in $\sigma$. The sum is over all spanning trees rooted at $s(e)$ that contain the edges of $\sigma\setminus e$. Lemma~\ref{nonconect2}  guarantees that the computation of  $\varphi(x)$ and the rate functions $q_{\mmU}(x)\Pi(\sigma)$ of the reduced reaction network  is independent of whether we consider $\mmG_{\mmU}$, or $\mmG_{\mmH_1}$ and $\mmG_{\mmH_2}$ separately. Proposition~\ref{nonconect} now follows  because the sets of cycles of $\mmG_{\mmH_1}$ and $\mmG_{\mmH_2}$ are disjoint.

\subsection{Post-translational modification networks}\label{sec:proofsexamples}

\begin{proof}[Proof of Proposition \ref{prop:ptm}: \textbf{PTM networks}]
The only edges in $\mmG_{\mmU}$ whose associated reaction involves a substrate (that is, a species in $\mmU^c$) are those with $E$ as source or target. 

$\Leftarrow)$ 
Let 
$S_{i_1}+E \ce{->}Y_{j_1}\ce{->}  \dots \ce{->} Y_{j_s} \ce{->} E+S_{i_2}$, $s\geq 0,$ be a path in $(\mmC,\mmR)$.
This path defines a cycle $\sigma$ in $\mmG_{\mmU}$ with nodes $E$, $Y_{j_1},\dots, Y_{j_s}$ (it is a self-edge if $s=0$). This cycle is such that $\Gamma(\sigma)\neq \emptyset$ ($\sigma$ belongs to a strongly connected component)
and  gives rise to a reaction 
$
S_{i_1}\ce{->} S_{i_2}$  in $\widetilde{\mmR}_2$.

$\Rightarrow)$
If $S_{i_1}\ce{->} S_{i_2}\in \widetilde{\mmR}_2$, 
then there is a cycle $\sigma\in \Delta$ defining it. The cycle $\sigma$ must contain a unique enzyme $E\in\mmE$. 
It follows that one of the edges in $\sigma$ corresponds to a reaction with reactant $S_{i_1} + E$, and one of edges  corresponds to a reaction with product $S_{i_2} + E$. All other edges correspond to reactions between intermediates. 
The reactions corresponding to the edges in the cycle give the claimed path from $S_{i_1}+E$ to $S_{i_2}+E$. 
 \end{proof}

\paragraph{Acknowledgements}
We thank E. Tonello for pointing out a mistake in a previous version of the paper.
MS, EF, CW  are supported by The Lundbeck Foundation (Denmark). 
EF and CW  acknowledge funding from the Danish Research Council of Independent Research. MS has been supported by the project  MTM2012-38122-C03-02/FEDER from the Ministerio de Econom\'{\i}a y Competitividad, Spain.

\bibliographystyle{spmpsci}      



\end{document}